\documentclass[11pt,a4paper]{amsart}
\usepackage[utf8]{inputenc}
\usepackage[T1]{fontenc}

\pdfoutput=1

\title[Vertex-transitive graphs with uniformly bisecting quasi-geodesics]{Vertex-transitive graphs with \\ uniformly bisecting quasi-geodesics}
\author{Joseph Paul MacManus}
\date{November 13, 2025}

\address{School of Mathematics, University of Bristol, Bristol, BS8 1UG, UK, and the Heilbronn Institute for Mathematical Research, Bristol, UK.}
\email{macmanus.maths@gmail.com}


\usepackage[numbers]{natbib}
\usepackage{amsfonts}
\usepackage{amsmath}
\usepackage{amsthm}
\usepackage{amssymb}
\usepackage{thmtools}
\usepackage{caption}
\usepackage{stmaryrd}
\usepackage{mathrsfs}  
\usepackage{todonotes}
\usepackage{graphicx}
\usepackage{tikz}
\usepackage{quiver}
\usepackage{tgpagella}
\usepackage[colorlinks]{hyperref} 
\hypersetup{
    linkcolor=black,
    citecolor=blue,
}
\usepackage{fancyref}
\usepackage{todonotes}



\DeclareMathOperator{\length}{length}

\DeclareMathOperator{\dHaus}{Haus}

\DeclareMathOperator{\dist}{d}

\newcommand{\R}{\mathbf{R}}
\newcommand{\Z}{\mathbf{Z}}
\newcommand{\N}{\mathbf{N}}

\newcommand{\into}{\hookrightarrow}




\makeatletter
\newcommand{\myitem}[1]{%
\item[#1]\protected@edef\@currentlabel{#1}%
}
\makeatother



\newtheorem{theorem}{Theorem}[section]
\newtheorem*{theorem*}{Theorem}

\newtheorem{proposition}[theorem]{Proposition}
\newtheorem{lemma}[theorem]{Lemma}
\newtheorem{corollary}[theorem]{Corollary}
\newtheorem{claim}[theorem]{Claim}
\newtheorem{question}[theorem]{Question}

\newtheorem*{question*}{Question}

\theoremstyle{definition}

\newtheorem{definition}[theorem]{Definition}

\newtheorem{remark}[theorem]{Remark}
\newtheorem{convention}[theorem]{Convention}

\newtheorem*{definition*}{Definition}


\newcommand{\crosshair}{\mathcal{C}}

\DeclareMathOperator{\depth}{depth}

\newcommand{\property}{(\ref{eq:BQ})}

\newcommand{\growth}{\mathfrak{G}}


\begin{document}

\begin{abstract}
    Suppose that $X$ is an infinite, connected, locally finite, quasi-transitive graph with the property that every bi-infinite quasi-geodesic uniformly coarsely separates $X$ into exactly two deep pieces. We show that such an $X$ is quasi-isometric to either the Euclidean plane or the hyperbolic plane. In particular, if $X$ is a Cayley graph of a finitely generated group $G$ with the above property, then $G$ is a virtual surface group. This can be interpreted as an extension of the well-known fact that a hyperbolic group with circular boundary is virtually Fuchsian. 
    
    Our theorem positively resolves Problem~14.98 of the Kourovka Notebook, posed by V. A. Churkin in 1999.  
    The proof uses an isoperimetric inequality of Varopoulos to show that if such a graph has the above property, then either it is hyperbolic or has quadratic growth.
\end{abstract}

\maketitle





\section{Introduction}

Planarity is known to be among the most rigid properties of finitely generated groups and, more generally, of locally finite, quasi-transitive graphs. Perhaps one of the most important examples of this phenomenon is a theorem rooted in work of Mess on the Seifert conjecture \cite{mess1988seifert}, which states that every finitely generated group that is quasi-isometric to a complete Riemannian plane is a virtual surface group. Mess dealt directly with the non-hyperbolic case using an inequality of Varopoulos to bound the growth of the group, while the hyperbolic case was later completed with the celebrated convergence group theorem of Tukia \cite{tukia1988homeomorphic}, Gabai \cite{gabai1992convergence}, and Casson--Jungreis \cite{casson1994convergence}.
Bowditch later gave a new proof and several remarkable algebraic extensions of this result, making use of a certain kind of `winding number' on the Cayley complex \cite{bowditch2004planar}. Around the same time, another coarse characterisation of virtual surface groups was given by Papasoglu in \cite{papasoglu2005quasi}, which describes virtual surface groups as, roughly speaking, those one-ended finitely presented groups whose Cayley graphs have the property that any two sufficiently far apart points can be separated by a quasi-line.
More recently, virtually planar groups were characterised by the present author among finitely presented groups as those which are asymptotically minor excluded \cite{macmanus2025fat}, and among finitely generated groups as those which are quasi-isometric to a planar graph \cite{macmanus2023accessibility}. An interesting extension of the latter was recently presented by Davies in \cite{davies2025string}. 
 
We now return our attention to the convergence group theorem, mentioned above. This asserts that if a group $G$ admits a discrete uniform convergence action on the circle, then this action is topologically conjugate to one which is induced by some geometric action of $G$ on the hyperbolic plane. In particular, this implies that any hyperbolic group with circular Gromov boundary is virtually Fuchsian. 
The goal of this paper is to present an extension of this result applicable to all locally finite, quasi-transitive graphs. Of course, if the graph in question is not hyperbolic then we do not necessarily have a natural or helpful notion of a `boundary at infinity'. Thus, to clarify the situation, it makes sense to reframe our problem in terms of the intrinsic geometry of the Cayley graph.
To this end, we invite the reader to consider the following definition. 

\begin{definition}\label{def:ubq}
    Let $X$ be a geodesic metric space. A connected subspace $Z \subset X$ is called \emph{narrow} if it is quasi-isometric to a subset of $\R$, with respect to the ambient metric. If $Z$ is not narrow then it is said to be \emph{wide}. We define the \emph{uniformly bisecting quasi-geodesics} property, denoted (\ref{eq:BQ}), as follows:
        \begin{equation}
  \tag{$\rm{UBQ}$}\label{eq:BQ}
  \parbox{4in}{%
    \strut
    There exists $\sigma > 0$ such that for any connected, bi-infinite quasi-geodesic $\rho \subset X$, we have that $X \setminus N_\sigma(\rho)$ contains exactly two wide connected components. 
    \strut
  }
\end{equation}
We call $\sigma$ the \emph{separation constant}.
\end{definition}

The property \property{} is easily checked to be a quasi-isometry invariant among geodesic spaces. 
Thus, it makes sense to say that a finitely generated group has this property. The Euclidean and hyperbolic planes obviously have \property{}, by the Jordan curve theorem. It is an exercise in hyperbolic geometry to show that if an infinite hyperbolic group satisfies \property{} then its Gromov boundary is circular. Hence, the convergence group theorem implies the following.

\begin{theorem*}[Corollary of the convergence group theorem]
    Let $G$ be an infinite hyperbolic group with \property{}. Then $G$ is virtually Fuchsian.
\end{theorem*}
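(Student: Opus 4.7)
The plan is to reduce the statement to the convergence group theorem by showing $\partial G \cong S^1$; the action of $G$ on $\partial G$ is then a discrete uniform convergence action on the circle, and Tukia--Gabai--Casson--Jungreis gives that $G$ is virtually Fuchsian.

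First I would verify that $G$ is one-ended. If $G$ were two-ended, then $G$ is quasi-isometric to $\R$ and no bi-infinite quasi-geodesic has any wide complementary component, violating \property{}. If $G$ had infinitely many ends, Stallings' structure theorem would allow me to produce a bi-infinite quasi-geodesic whose $\sigma$-neighbourhood is complemented by at least three wide components (one for each end lying on either side of a finite separator that the quasi-geodesic threads through), again contradicting \property{}. With $G$ one-ended, the work of Bestvina--Mess together with Swenson--Bowditch on cut points ensures that $\partial G$ is a compact, connected, locally connected metric space with no cut points.

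Next I would use \property{} to show that every pair of distinct boundary points disconnects $\partial G$ into exactly two pieces. Given $\xi \neq \eta \in \partial G$, pick a bi-infinite geodesic $\rho$ joining them; by \property{}, $X \setminus N_\sigma(\rho)$ has exactly two wide components $C_1, C_2$. Using the Morse lemma and the visual structure of $\partial G$, each wide $C_i$ has non-empty limit set $A_i \subset \partial G \setminus \{\xi, \eta\}$, narrow components contribute no further accumulation outside $\{\xi, \eta\}$, and $\partial G \setminus \{\xi, \eta\} = A_1 \sqcup A_2$ as an open partition; absence of cut points then forces each $A_i$ to be connected. The classical topological characterisation of the circle (any non-degenerate Peano continuum in which every pair of distinct points separates is homeomorphic to $S^1$) now yields $\partial G \cong S^1$.

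The step I expect to be the main obstacle is the correspondence between wide components of $X \setminus N_\sigma(\rho)$ and connected components of $\partial G \setminus \{\xi, \eta\}$. This rests on showing that wide subsets have non-trivial accumulation on $\partial G$ while narrow ones accumulate in at most two points (namely $\xi$ and $\eta$), and that the limit-set map is a bijection with connected fibres. These facts should follow from standard thin-triangle and quasi-convexity estimates in $\delta$-hyperbolic geometry together with properties of the visual metric on $\partial G$, but some bookkeeping is needed to phrase them precisely at the level of the quasi-transitive graph $X = \Cay(G,S)$.
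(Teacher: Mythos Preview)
Your proposal is correct and follows essentially the same route as the paper: show $G$ is one-ended, use \property{} together with visibility to see that every pair of distinct boundary points separates $\partial G$, invoke a classical topological characterisation of the circle, and then apply the convergence group theorem. The paper is considerably terser---it simply asserts the cut-pair property is ``immediate from \property{} and the visibility property'' and cites Newman for the circle characterisation---whereas you spell out the wide/narrow component analysis and invoke Bestvina--Mess and Swenson--Bowditch for local connectedness and absence of cut points; these extra ingredients are not wrong but are not strictly needed, since the characterisation ``a nondegenerate metric continuum in which every pair of points separates is a simple closed curve'' already does the work.
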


In this form, the hypotheses now make sense in any finitely generated group, or indeed in any locally finite, quasi-transitive graph.
It is therefore natural to ask whether this theorem extends to non-hyperbolic groups or to arbitrary locally finite, quasi-transitive graphs. This question appears as Problem~14.98 of the Kourovka Notebook \cite{khukhro2025unsolvedproblemsgrouptheory}, posed by V. A. Churkin in 1999. 
In this paper, we answer Churkin's question in the affirmative and prove the following.

\begin{restatable}{alphtheorem}{main}\label{thm:main-theorem}
    Let $X$ be an infinite, connected, locally finite, quasi-transitive graph with \property{}. Then $X$ is quasi-isometric to either the Euclidean plane or the hyperbolic plane.  
\end{restatable}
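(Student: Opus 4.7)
The plan is to establish a dichotomy based on whether $X$ is Gromov hyperbolic, and then to invoke a deep classification theorem in each case. Following the hint in the abstract, the dichotomy to be proved is: if $X$ is not hyperbolic, then $X$ has quadratic volume growth. For this step, note that \property{} says every bi-infinite quasi-geodesic is flanked by a strip of uniformly bounded width whose removal separates $X$ into exactly two deep components, which is a strong one-dimensional cut property. Failure of hyperbolicity, on the other hand, provides arbitrarily fat geodesic triangles. Combining these two facts should yield a linear isoperimetric inequality for sufficiently large balls, since the boundary of such a ball can be coarsely bisected by a quasi-geodesic through the centre. Varopoulos's inequality, which bounds volume growth from above in terms of the isoperimetric profile, then forces the growth of $X$ to be at most quadratic. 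A matching lower bound is straightforward: a bi-infinite quasi-geodesic with two wide components on each side provides enough room to pack $\Omega(n^2)$ balls of unit radius inside a ball of radius $n$.

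In the hyperbolic horn, I would first observe that $X$ is one-ended: having exactly two deep components per quasi-line rules out infinitely many ends, while a two-ended quasi-transitive graph is quasi-isometric to $\R$ and so fails \property{}. Hence $\partial X$ is a connected, locally connected, metrizable continuum by the work of Bestvina--Mess and Bowditch. For distinct $\xi,\eta \in \partial X$, let $\rho$ be a bi-infinite geodesic joining them. Using visibility in the hyperbolic setting, the two deep components of $X \setminus N_\sigma(\rho)$ should correspond bijectively to the components of $\partial X \setminus \{\xi,\eta\}$, which therefore number exactly two. By the classical topological characterization of $S^1$ as the unique Peano continuum in which every pair of distinct points separates it into exactly two components, one concludes $\partial X \cong S^1$. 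The induced uniform convergence action of $\Aut(X)$ on $S^1$ is then conjugate to a Fuchsian action by the convergence group theorem (applied in its form for locally compact, quasi-transitive actions), yielding that $X$ is quasi-isometric to $\mathbb{H}^2$.

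In the quadratic growth horn, Trofimov's theorem for quasi-transitive graphs of polynomial growth, combined with Gromov's polynomial growth theorem, gives that $X$ is quasi-isometric to a Cayley graph of a virtually nilpotent group; degree-two growth then forces this group to be virtually $\Z^2$, and hence $X$ is quasi-isometric to $\R^2$. The main obstacle is expected to be the dichotomy step: one must extract a genuinely useful linear isoperimetric inequality from \property{} and then combine it with Varopoulos's inequality in the general quasi-transitive graph setting, rather than in the finitely generated group setting where Mess's original argument operates. A subtler secondary difficulty is verifying rigorously the topological characterization $\partial X \cong S^1$ (in particular, showing that the two-component separation passes from $X$ to $\partial X$ uniformly in $\xi,\eta$) and then invoking the convergence group theorem in a form suited to the locally compact group $\Aut(X)$ rather than to a discrete group.
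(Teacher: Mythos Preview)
Your overall architecture matches the paper's exactly: split into hyperbolic versus non-hyperbolic, handle the hyperbolic case via one-endedness, the cut-pair property of $\partial X$, the topological characterisation of $S^1$, and the convergence group theorem; handle the non-hyperbolic case by proving quadratic growth and then invoking Trofimov plus Bass--Guivarc'h. The hyperbolic horn is essentially as in the paper (the paper only checks that every pair in $\partial X$ is a cut pair and cites Newman, rather than checking exactly two components, but this is cosmetic), and the endgame via Trofimov is identical.

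The gap is in the step ``non-hyperbolic $+$ \property{} $\Rightarrow$ quadratic growth'', which is the heart of the paper and occupies five of its eight sections. Your sketch --- ``the boundary of a large ball can be coarsely bisected by a quasi-geodesic through the centre, yielding a linear isoperimetric inequality'' --- does not work as stated. First, a \emph{linear} isoperimetric inequality ($|\partial A| \gtrsim |A|$) would force exponential growth, the opposite of what you want; what is needed are \emph{F{\o}lner-type} sets $A_n$ with $\depth(A_n)$ comparable to $|\partial A_n|$, i.e.\ sets witnessing the \emph{failure} of any super-quadratic isoperimetric inequality. Second, bisecting a ball by a quasi-geodesic through its centre does not by itself produce such sets: in $\Z^3$, say, every quasi-geodesic still coarsely bisects, yet growth is cubic. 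You have not explained how \property{} constrains the \emph{size} of the pieces, nor how non-hyperbolicity (fat triangles) enters the argument.

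The paper's actual mechanism is quite different and considerably more elaborate. Non-hyperbolicity is used only to supply arbitrarily long $(18,0)$-quasi-circles. The paper then builds a fixed auxiliary ``cross-examiner'' (three quasi-geodesic rays with witnesses and connecting arcs) and proves, via repeated applications of a \emph{witness protection lemma}, that any loop built from a quasi-circle and a segment of the cross-examiner traps a region whose depth is linearly comparable to the loop's perimeter. Feeding these trapped regions into Varopoulos (in the Saloff-Coste form for vertex-transitive graphs) gives quadratic growth. The key combinatorial content --- that witnesses on opposite sides of a quasi-geodesic remain separated after bounded perturbations, and hence that long quasi-circles cannot have uniformly bounded ``jurisdiction'' --- is entirely absent from your sketch and is not something one can expect to recover from a direct ball-cutting argument.
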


Since finitely generated abelian groups and Fuchsian groups are known to be quasi-isometrically rigid, this has the consequence that if $G$ is a finitely generated group with \property{} then $G$ is a virtual surface group. Thus we provide a new, purely geometric characterisation of virtual surface groups among all finitely generated groups.

We remark that Theorem~\ref{thm:main-theorem} can be compared to Bing's characterisation of the 2-sphere \cite{bing1946kline}. This states that any compact, connected, locally connected space without cut-points or cut-pairs, in which every simple closed curve separates, is homeomorphic to the 2-sphere. Morally, our theorem is a coarse analogue of this statement for locally finite, quasi-transitive graphs.
We also point out that our characterisation is similar in spirit to Papasoglu's theorem mentioned above, in the sense that they both characterise virtual surface groups through the separation properties of their quasi-lines. One key difference between the two results is that Papasoglu requires the group to be \emph{a priori} finitely presented, whereas we do away with this assumption altogether.

\subsection*{Outline}

Before we begin, we describe a rough outline of this paper and the proof therein.
In order to prove Theorem~\ref{thm:main-theorem}, we consider separately the cases where $X$ is hyperbolic and where $X$ is not hyperbolic. The hyperbolic case follows from the convergence group theorem as discussed above.\footnote{The case where $X$ is hyperbolic but not a Cayley graph requires some care, but still follows from known results; see~\cite[\S1.1]{macmanus2024note}.} Our attention is thus entirely focused on the case where $X$ is not hyperbolic.
The ultimate goal is to prove that non-hyperbolicity plus \property{} implies that $X$ has quadratic growth. From here, a theorem of Trofimov  \cite{trofimov1985graphs}, building on work of Gromov \cite{gromov1981groups}, plus the Bass--Guivarc'h formula for the degrees of growth of nilpotent groups \cite{bass1972degree, guivarc1973croissance}, allows us to conclude that $X$ is quasi-isometric to the Euclidean plane.

We will study this problem through the eyes of \emph{witnesses}, which are introduced in \S\ref{sec:witness}. These are pairs of rays which probe deep into the two wide components bound by a bi-infinite quasi-geodesic. A key tool here will be the \emph{witness protection lemma}, which asserts that if we perturb a bi-infinite geodesic in a bounded way, then the original witnesses still witness two distinct deep components bound by the new quasi-geodesic. Following this, in \S\ref{sec:crosshair} and \S\ref{sec:witness-sep} we construct and consider a large figure called a \emph{cross-examiner}. Attaching paths to the cross-examiner and applying the witness protection lemma allows us to construct closed loops which coarsely bound regions of $X$ in a controlled way. These loops should be compared to Jordan curves in the plane. Combining this with an inequality of Varopoulos, 
in \S\ref{sec:jurisdiction} we establish a useful criterion for showing that $X$ has quadratic growth. 

Following this, we study the separation properties of \emph{quasi-circles} and, more generally, \emph{truncated quasi-circles} in \S\ref{sec:circles}. We apply our new criterion for quadratic growth and see that either  all (truncated) quasi-circles coarsely bound regions of at most uniformly bounded depth, regardless of length, or $X$ has quadratic growth. In the language of this paper, we prove that truncated quasi-circles have \emph{limited jurisdiction}. In the final section, \S\ref{sec:final}, we apply the Arzel\`a--Ascoli theorem to construct a sequence of `nested' truncated quasi-circles lying along a bi-infinite quasi-geodesic. We then observe that this `nesting' taken together with the witness protection lemma means the truncated quasi-circles cannot possibly have limited jurisdiction. This enables us to conclude that $X$ has quadratic growth, and deliver our final verdict.

\subsection*{Acknowledgements}

 I thank Panos Papasoglu for helpful discussions about bigons, and Agelos Georgakopoulos for comments. This work was supported by the Heilbronn Institute for Mathematical Research.


\section{Preliminaries}\label{sec:prelims}

In this paper, every graph is assumed to be simplicial. We will often conflate a graph with its `geometric realisation', and view a graph as a CW-complex, metrised in the obvious way with edges having unit length. Given a graph $X$, let $\dist_X$ denote this metric. We write $V(X)$ for its vertex set and $E(X)$ for its set of (unoriented) edges, respectively. 

For us, a path $p$ is a sequence of vertices $p = v_0, v_1, \ldots, v_n$ such that $v_i$ and $v_{i+1}$ are connected by an edge for every $0 \leq i < n$. The vertices $v_0$ and $v_n$ are called the \emph{initial} and \emph{terminal} vertices of $p$. We also allow bi-infinite and one-way infinite paths. Given a finite path $p$ as above, we define $\length(p) := n$. The path $p$ is said to be \emph{simple} if every $v_i$ appears exactly once. A finite path is called a \emph{cycle} or \emph{loop} if $v_0 = v_n$, and is said to be a \emph{simple loop} if every $v_i$ appears exactly once, except for the endpoints which appear twice. 
We will often abuse notation and parametrise paths as maps $p : I \to X$. Paths will \textbf{always} be parametrised at unit speed. Given two paths $p$, $q$ such that the terminal vertex of $p$ is equal to the initial vertex of $q$, we write $p \cdot q$ to mean their concatenation. We also write $p^{-1}$ to mean the reversal of the path $p$. 
Let $\lambda \geq 1$, $c \geq 0$. Given a path $p : I \to X$, we call $p$ a \emph{$(\lambda, c)$-quasi-geodesic} if
$$
\dist_X(p(s),p(t)) \geq \frac 1 \lambda |s-t| - c,
$$
for all $s,t \in I$. 
Throughout the rest of this paper, we will adhere to the following conventions.

\begin{convention}
    Unless otherwise stated, we fix $X$ and $\sigma > 0$ throughout this paper where $X$ is an  infinite, connected, locally finite, vertex-transitive graph which satisfies \property{} with separation constant $\sigma$. 
\end{convention}

\begin{convention}
    When we talk of a quasi-geodesic, we will always mean a connected path parametrised at unit speed. 
\end{convention}

On that note, we may now begin proceedings.

\section{Witness protection}\label{sec:witness}

The purpose of this section is to motivate and prove a key lemma, which we call the \emph{witness protection lemma}. This is Lemma~\ref{lem:witness-protection} below. Before this, we need to make a few basic observations.

\begin{proposition}\label{prop:one-ended}
    $X$ is one-ended. 
\end{proposition}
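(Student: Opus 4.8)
The plan is to rule out the other two possibilities for the number of ends of an infinite, connected, locally finite, vertex-transitive graph, namely $0$, $2$, or $\infty$. Being infinite, $X$ has at least one end, so $0$ is excluded immediately. It remains to exclude $2$ and $\infty$, and in both cases I would derive a contradiction with \property{}.

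First I would handle the two-ended case. If $X$ has exactly two ends, then $X$ is quasi-isometric to $\R$, so $X$ is itself narrow and in particular every connected subspace of $X$ is narrow. Now take any bi-infinite geodesic $\rho$ in $X$ (one exists: being two-ended and vertex-transitive, $X$ admits a bi-infinite geodesic line, e.g. by the Arzel\`a--Ascoli / König's lemma argument applied to a sequence of geodesic segments joining points tending to the two distinct ends). Then $X \setminus N_\sigma(\rho)$ has at most finitely many components, each of which is narrow, hence none is wide. This contradicts \property{}, which demands exactly two wide components. So $X$ is not two-ended.

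Next I would handle the infinitely-ended case, which by a theorem of Stallings (or by basic accessibility of vertex-transitive graphs) means $X$ is not one-ended and has infinitely many ends; equivalently, $X$ contains a subgraph quasi-isometric to the Cayley graph of a free product or an amalgam over a finite group, and in particular $X$ has a tree-like structure with at least three "directions" to infinity. Concretely, infinitely many ends implies there is a finite set $S$ whose removal leaves at least three infinite components, each of which is wide (an infinite, locally finite, quasi-transitive-adjacent piece cannot be narrow here — this uses that $X$ has exponential, or at least super-linear, growth, contradicting the quasi-isometry-with-$\R$ characterisation of narrowness). The cleanest route: pick a bi-infinite geodesic $\rho$ whose two ends point into two of these components. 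Since $X$ has infinitely many ends, removing the bounded neighbourhood $N_\sigma(\rho)$ still leaves at least one further infinite "branch" disjoint from $\rho$'s two ends; with a little care (choosing $\rho$ to avoid one of the branch-separating finite sets, which is possible because $X$ is vertex-transitive and the ends are genuinely independent) one gets at least three wide components of $X \setminus N_\sigma(\rho)$, again contradicting \property{}.

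The main obstacle I anticipate is the bookkeeping in the infinitely-ended case: one must produce a single bi-infinite quasi-geodesic whose thickened complement genuinely has three or more \emph{wide} (not merely infinite) pieces, which requires knowing that the relevant infinite pieces have growth incompatible with being quasi-isometric to a subset of $\R$. This is where local finiteness and vertex-transitivity are essential — they force any infinite vertex-transitive-covered chunk to be "fat" in a way a line cannot be, but making this precise (rather than hand-waving "infinitely many ends means exponential growth") is the delicate step. Once that is in hand, the contradiction with \property{} in each case is routine, and we conclude $X$ is one-ended.
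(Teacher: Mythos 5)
Your proposal follows the same strategy as the paper: rule out $0$, $2$, and $\infty$ ends, using \property{} to dismiss the last two by exhibiting a bi-infinite geodesic $\rho$ whose thickened complement has the wrong number of wide pieces. You actually supply more detail than the paper does — it dispatches the two-ended case with a bare ``clearly'' and treats the wideness of the infinite components in the infinitely-ended case as evident, whereas you explicitly explain narrowness of subsets of a two-ended $X$ and correctly flag (and sketch a growth-based fix for) the subtlety that infinite components must be shown wide, not merely infinite.
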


\begin{proof}
    Note we assumed $X$ to be infinite, and clearly $X$ cannot be two-ended. Suppose $X$ is infinite-ended. In this case, it is easy to find a quasi-geodesic $\rho$ and $R > 0$ such that $X \setminus N_R(\rho)$ has infinitely many wide connected components. Indeed, we may simply take $\rho$ to be a geodesic connecting any two distinct ends. It follows that $X$ must be one-ended.
\end{proof}

The following remark is also worth noting. 

\begin{remark}\label{rmk:nbhds-bisect}
    If $\rho$ is a bi-infinite quasi-geodesic, then tubular neighbourhoods of $\rho$ also `bisect' $X$. That is, given $r \geq \sigma$, if we let $Z = N_r(\rho)$, then $X \setminus Z$ also contains exactly two wide connected components. This is because there exists a bi-infinite quasi-geodesic $\rho'$ whose vertex set is exactly $Z$, obtained by travelling along $\rho$, visiting every vertex in the $r$-ball around a given vertex, then travelling to the next vertex along $\rho$, and so on. 
\end{remark}

We now prove a sequence of basic lemmata, the first of which presents a way of recognising wide components. 

\begin{lemma}\label{lem:witness-implies-wide}
    Let $\rho \subset X$ be a bi-infinite quasi-geodesic. Let $U$ be a connected component of $X \setminus N_\sigma(\rho)$. Suppose that there is a one-way infinite simple path $p$ contained in $U$ such that $\lim_{t \to \infty} \dist(p(t), \rho) = \infty$. Then $U$ is wide. 
\end{lemma}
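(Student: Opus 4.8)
The plan is to prove the contrapositive: assuming $U$ is narrow, I will show that the existence of $p$ forces $X$ to have (at most) linear growth, which is impossible for an infinite, one-ended, vertex-transitive graph.

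The first ingredient is a ``deep ball'' estimate: for every vertex $v \in U$ one has $B(v,R) \subseteq U$ whenever $R + \sigma < d(v,\rho)$. Indeed, if $w \notin U$ with $d(v,w) \le R$, a geodesic from $v$ to $w$ must leave $U$ at some vertex $z$ adjacent to a vertex of $U$; since two vertices lying in distinct components of $X \setminus N_\sigma(\rho)$ are never adjacent, $z$ cannot lie in another component, so $z \in N_\sigma(\rho)$, whence $d(v,z) \ge d(v,\rho) - \sigma > R$, contradicting $d(v,z) \le d(v,w) \le R$. Applying this with $v = p(t)$ and $R_t := \lfloor d(p(t),\rho)\rfloor - \lceil\sigma\rceil - 1$ gives $B(p(t),R_t) \subseteq U$, and $R_t \to \infty$ by the hypothesis on $p$.

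The second ingredient bounds how much of a ball of $X$ can sit inside a narrow set. Fix a quasi-isometric embedding $f \colon U \to \R$ (available since $U$ is narrow). Because $X$ has bounded degree, the $f$-preimage of any interval of unit length has uniformly bounded diameter, hence uniformly bounded cardinality, say at most $K$; covering an interval $I$ by $O(\operatorname{length} I)$ unit intervals then yields $|f^{-1}(I)| = O(\operatorname{length} I)$. Since $f$ maps $B(v,R)\cap U$ into an interval of length $O(R)$ for every $v \in U$, we conclude $|B(v,R)\cap U| = O(R)$, uniformly in $v \in U$. Combining the two ingredients, $|B(p(t),R_t)| = |B(p(t),R_t)\cap U| = O(R_t)$, and by vertex-transitivity the left-hand side is just the number of vertices of an arbitrary ball of radius $R_t$. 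As $t \mapsto d(p(t),\rho)$ is $1$-Lipschitz and unbounded, the $R_t$ are cofinal in $\N$, so $X$ has at most linear growth. But an infinite, connected, locally finite, vertex-transitive graph of linear growth is two-ended — for instance, by Trofimov's theorem \cite{trofimov1985graphs} together with the Bass--Guivarc'h formula \cite{bass1972degree, guivarc1973croissance} it is quasi-isometric to a Cayley graph of a virtually cyclic group — contradicting Proposition~\ref{prop:one-ended}. Hence $U$ is wide.

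The step I expect to require the most care is the second ingredient: checking that narrowness, which is an a priori purely large-scale condition on $U$, in fact forces the linear bound $|B(v,R)\cap U| = O(R)$ uniformly over all $v \in U$ and all scales $R$; the rest is routine. It is also worth keeping in mind that both vertex-transitivity (used to globalise the ball estimate and again to pass from linear growth to two-endedness) and one-endedness are genuinely needed — the Euclidean plane with an infinite ``hair'' attached is a non-vertex-transitive example in which the lemma fails, precisely because ball sizes there are not uniform.
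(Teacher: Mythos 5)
Your argument is correct, but it takes a genuinely different route from the paper's. The paper works directly with ends: narrowness of $U$ produces a quasi-geodesic ray $\alpha$ lying at finite Hausdorff distance from (a half of) $U$, the divergence of $p$ forces $\alpha$ to diverge from $\rho$, and this makes $\partial U$ bounded, so that removing a large ball around $\alpha(0)$ leaves at least two infinite components—contradicting Proposition~\ref{prop:one-ended}. You instead route through growth: the deep-ball estimate plus the bound $|B(v,R)\cap U|=O(R)$ coming from the quasi-isometric embedding of $U$ into $\R$ force $X$ to have linear volume growth, and then Trofimov plus Bass--Guivarc'h (heavier machinery than the lemma really warrants, though it works) yield that $X$ is two-ended. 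Both proofs bottom out in Proposition~\ref{prop:one-ended}, but the paper's version is more elementary, while yours avoids having to split into the $\R$ and $\R^+$ cases and makes the quantitative content of narrowness explicit. Two small remarks. First, ``the $R_t$ are cofinal in $\N$'' is, on its own, not quite enough to upgrade a bound along a subsequence of radii to a bound for all radii; what you actually use—and what is true because $t\mapsto\dist_X(p(t),\rho)$ is $1$-Lipschitz—is that the $R_t$ eventually realise every sufficiently large integer, and this is worth saying. Second, the closing aside about the plane with a hair attached is not actually a graph in which the lemma fails, since such a graph cannot satisfy the standing hypothesis~\property{} (Proposition~\ref{prop:one-ended} would already rule it out); the role vertex-transitivity genuinely plays in \emph{your} proof is in globalising the ball estimate and in passing from linear growth to two ends, neither of which appears in the paper's argument.
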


\begin{proof}
    Suppose $U$ is narrow, so that $U$ is quasi-isometric to some connected---and necessarily infinite---subset of $\R$. We therefore have two cases, either $U$ is quasi-isometric to the full-line $\R$, or to the half-line $\R^+$.

    First, let us assume that $U$ is quasi-isometric to the half-line $\R^+$. Then there exists a one-way infinite quasi-geodesic ray $\alpha \subset U$ such that $\dHaus(U, \alpha)$ is finite. We now note that $\alpha$ must diverge from $\rho$. That is,  $\lim_{t \to \infty} \dist(\alpha(t), \rho) = \infty$. Indeed, if this were not the case then $p$ itself could not diverge from $\rho$.
    In particular, this implies that $X$ has multiple ends which contradicts Proposition~\ref{prop:one-ended}. 

    The case where $U$ is quasi-isometric to $\R$ follows identically, as we need only notice that $p$ will spend an infinite amount of time in exactly one `half' of $U$. The corresponding `half' of $\alpha$ will then diverge from $\rho$ and we conclude similarly. 
\end{proof}

The next lemma provides a converse to the above. 

\begin{lemma}\label{lem:wide-implies-witness}
    Let $\rho \subset X$ be a bi-infinite quasi-geodesic, and let $U$ be a wide component of $X \setminus N_\sigma(\rho)$. Then there exists a path $p \subset U$ such that  $\lim_{t \to \infty} \dist_X(p(t), \rho) = \infty$.  
\end{lemma}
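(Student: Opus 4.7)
The plan is to construct $p$ directly by using Remark~\ref{rmk:nbhds-bisect} to build a nested family of ``deep wide sub-pieces'' of $U$ and then walking between them. First I would apply Remark~\ref{rmk:nbhds-bisect} to see that for every $R \geq \sigma$, the space $X \setminus N_R(\rho)$ has exactly two wide connected components. Each such wide component $W$ must sit inside one of the two wide components of $X \setminus N_\sigma(\rho)$ (call them $U$ and $U'$), since if $W$ lay in a narrow component of $X \setminus N_\sigma(\rho)$ then $W$ itself would be quasi-isometric to a subset of $\R$ by restricting the witnessing quasi-isometry, contradicting wideness. A counting argument then forces the assignment to be bijective: neither $U$ nor $U'$ can contain zero wide components of $X \setminus N_R(\rho)$ without itself being contained in a slab $N_R(\rho) \setminus N_\sigma(\rho)$ (and hence narrow). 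Hence $U$ contains a unique wide component of $X \setminus N_R(\rho)$, which I call $U^R$.

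Applying the same reasoning to nested neighbourhoods, I would conclude $U^{R'} \subset U^R$ whenever $\sigma \leq R < R'$, because $U^{R'}$ is a wide subset of $U$ lying in $X \setminus N_R(\rho)$, and the only wide component of the latter contained in $U$ is $U^R$. This yields a descending chain $U^\sigma \supset U^{\sigma+1} \supset U^{\sigma+2} \supset \cdots$ of non-empty connected sets, with $U^n$ lying entirely at distance greater than $n$ from $\rho$.

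With the chain constructed, I would pick a vertex $w_n \in U^n$ for each integer $n \geq \sigma$. Since $w_n, w_{n+1}$ both lie in the connected set $U^n$, there is a path $\alpha_n \subset U^n$ from $w_n$ to $w_{n+1}$. Concatenating gives an infinite walk $p = \alpha_\sigma \cdot \alpha_{\sigma+1} \cdot \alpha_{\sigma+2} \cdots$ in $U$. The crucial observation is that for every $n$, the tail of $p$ beyond the appearance of $w_n$ is contained in $\alpha_n \cup \alpha_{n+1} \cup \cdots \subset U^n$, hence at distance greater than $n$ from $\rho$. Therefore $\dist(p(t), \rho) \to \infty$, and $p$ is the desired path (recalling that the paper's convention permits paths to be walks, not just simple paths).

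The main subtlety is establishing the bijective correspondence between wide components at successive scales; this relies on Remark~\ref{rmk:nbhds-bisect} together with the elementary fact that wideness is inherited by supersets but not by subsets. Once that is in place, the construction of the divergent walk is routine.
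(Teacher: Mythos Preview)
Your overall architecture is sensible and more constructive than the paper's terse contradiction argument, but the justification you give for the crucial bijectivity step is wrong. You assert that if $U$ contains zero wide components of $X \setminus N_R(\rho)$ then $U$ must lie in the slab $N_R(\rho) \setminus N_\sigma(\rho)$. That implication is false: $U$ could perfectly well contain many \emph{narrow} components of $X \setminus N_R(\rho)$---even unbounded ones---and hence protrude far outside $N_R(\rho)$ while still containing no wide component. So the ``slab'' reasoning does not establish that each of $U, U'$ receives one of the two wide pieces, and without that your nested chain $U^\sigma \supset U^{\sigma+1} \supset \cdots$ is never defined. Note also that you do not strictly need uniqueness (a K\"onig-type argument would suffice since there are at most two wide pieces at each scale); what you genuinely need is \emph{existence}: that $U$ contains at least one wide component of $X \setminus N_R(\rho)$ for every $R$. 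But this is precisely the heart of the lemma, and your argument does not supply it. A correct justification would have to rule out the scenario in which both wide components of $X \setminus N_R(\rho)$ sit inside $U'$ while $U$ is filled with narrow debris---and doing so seems to require essentially the same ideas (one-endedness via Proposition~\ref{prop:one-ended}, as in the proof of Lemma~\ref{lem:witness-implies-wide}) that the lemma itself encodes.

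For comparison, the paper's proof runs the argument by contradiction: if no divergent path exists in $U$, it claims there is some $R$ for which every component of $U \setminus N_R(\rho)$ is \emph{finite}; since finite implies narrow, the two wide components of $X \setminus N_R(\rho)$ guaranteed by Remark~\ref{rmk:nbhds-bisect} would then have to avoid $U$, and the paper reads off a contradiction from this. The paper's route sidesteps your slab claim by working with finiteness of components rather than narrowness.
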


\begin{proof}
    Suppose not, then there exists $R > 0$ such that every component of $U \setminus N_R(\rho)$ is finite. But then $X \setminus N_R(\rho)$ has at most one wide component. This contradicts Remark~\ref{rmk:nbhds-bisect}.
\end{proof}

In light of Lemmata~\ref{lem:witness-implies-wide} and \ref{lem:wide-implies-witness}, we introduce the following terminology. 

\begin{definition}[Witnesses] 
    Let $\rho \subset X$ be a bi-infinite quasi-geodesic. We say a pair $(w_1,w_2)$ of one-way infinite simple paths in $X$ is a \emph{pair of witnesses for $\rho$} if the following hold:
    \begin{enumerate}
        \item $w_1$ and $w_2$ are contained in distinct wide components of $X \setminus N_\sigma(\rho)$, and
        \item For both $i = 1,2$, we have $ \dist(w_i(t), \rho) \to \infty$ as $t \to \infty$. 
    \end{enumerate}
\end{definition}

We now have the following key lemma, which will be employed throughout this paper. It says that pairs of witnesses are preserved by bounded perturbations of the quasi-geodesic.

\begin{lemma}[The witness protection lemma]\label{lem:witness-protection}
    Let $\rho_1, \rho_2 \subset X$ be bi-infinite quasi-geodesics such that $\dHaus(\rho_1, \rho_2)$ is finite. Suppose $(w_1, w_2)$ is a pair of witnesses for $\rho_1$. Then there are infinite subpaths $w_1'\subset w_1$ and $w_2' \subset w_2$ such that $(w_1', w_2')$ is a pair of witnesses for $\rho_2$. 
\end{lemma}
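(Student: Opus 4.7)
The plan is to pass through an intermediate ``buffer'' neighbourhood $Z$ containing both $N_\sigma(\rho_1)$ and $N_\sigma(\rho_2)$, and to show that the wide components of $X \setminus Z$ correspond bijectively to the wide components of $X \setminus N_\sigma(\rho_i)$ for each $i = 1, 2$. Concretely, I would set $R = \dHaus(\rho_1, \rho_2)$ and $Z = N_{\sigma+R}(\rho_1)$; since $\rho_2 \subset N_R(\rho_1)$, we have $N_\sigma(\rho_2) \subset Z$, and clearly $N_\sigma(\rho_1) \subset Z$ as well. By Remark~\ref{rmk:nbhds-bisect}, $X \setminus Z$ has exactly two wide components, which I will call $W_1$ and $W_2$.

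The substantive step is the bijection claim: for each $i$, the inclusion $X \setminus Z \hookrightarrow X \setminus N_\sigma(\rho_i)$ sends $\{W_1, W_2\}$ bijectively onto the pair of wide components of $X \setminus N_\sigma(\rho_i)$. Each $W_j$, being connected, lies in a unique component of $X \setminus N_\sigma(\rho_i)$, and this component is wide because a subset of a narrow subspace (with the ambient metric) is itself narrow, as a quasi-isometry to a subset of $\R$ restricts to a quasi-isometry on any connected subset. This gives a well-defined map between two-element sets. For injectivity, I would suppose instead that both $W_1$ and $W_2$ lie in a single wide component $U$ of $X \setminus N_\sigma(\rho_i)$, and derive a contradiction from the other wide component $U'$. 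A routine open-closed argument shows that $U' \cap (X \setminus Z)$ is a union of components of $X \setminus Z$, each of which must be narrow since $W_1, W_2 \not\subset U'$. On the other hand, $U'$ is wide, so by Lemma~\ref{lem:wide-implies-witness} it contains a path that diverges from $\rho_i$; a tail of this path lies inside some component $C$ of $X \setminus Z$ contained in $U'$. The proof of Lemma~\ref{lem:witness-implies-wide} uses only the one-endedness of $X$ and not the precise value of the separation constant, so the same argument applied at radius $\sigma + R$ forces $C$ to be wide, contradicting the preceding narrowness.

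Granted the bijection, the rest of the proof is formal. Each $w_i$ diverges from $\rho_1$, hence eventually leaves $Z$; let $w_i'$ be the resulting infinite tail. Since the original $w_1, w_2$ lie in distinct wide components of $X \setminus N_\sigma(\rho_1)$, the bijection for $\rho_1$ forces the tails into distinct $W_j$, and after relabelling I may assume $w_i' \subset W_i$. The bijection for $\rho_2$ then places $w_1', w_2'$ into distinct wide components of $X \setminus N_\sigma(\rho_2)$. Finally, $d(w_i'(t), \rho_2) \geq d(w_i'(t), \rho_1) - R \to \infty$ shows that each $w_i'$ still diverges from $\rho_2$, so $(w_1', w_2')$ is a pair of witnesses for $\rho_2$.

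The main obstacle I expect is the injectivity step: it rests both on recognizing, via an open-closed argument, that components of $U' \cap (X \setminus Z)$ are already components of $X \setminus Z$, and on observing that the proof of Lemma~\ref{lem:witness-implies-wide} generalizes verbatim to the larger tubular radius $\sigma+R$. Once the correspondence of wide components is established, the conclusion is essentially topological bookkeeping.
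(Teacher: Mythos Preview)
Your proof is correct and follows the same core strategy as the paper: pass to a buffer neighbourhood $Z$ containing both $N_\sigma(\rho_1)$ and $N_\sigma(\rho_2)$, use Remark~\ref{rmk:nbhds-bisect} to get exactly two wide components of $X\setminus Z$, and apply Lemma~\ref{lem:witness-implies-wide} at the larger radius to place tails of the $w_i$ into these two components.

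The organisational difference is that the paper avoids your bijection claim by introducing an auxiliary pair of witnesses $(u_1,u_2)$ for $\rho_2$ directly (via Lemma~\ref{lem:wide-implies-witness}), tails them into $X\setminus N_\sigma(Z)$, and then simply matches $w_i'$ with $u_i'$ using the fact that there are only two wide components. Your injectivity argument is really the same idea unpacked: the divergent path you extract from $U'$ via Lemma~\ref{lem:wide-implies-witness} \emph{is} the auxiliary witness, and the contradiction you derive is that it would have to land in a third wide component of $X\setminus Z$. The paper's route is a little shorter; yours isolates a cleaner intermediate statement (the wide-component bijection) that could be reused.

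One small point to make explicit: when you say ``the bijection for $\rho_1$ forces the tails into distinct $W_j$'', you are implicitly using that the component of $X\setminus Z$ containing each $w_i'$ is itself wide. This follows from the same generalisation of Lemma~\ref{lem:witness-implies-wide} you invoked in the injectivity step (since $w_i'$ diverges from $\rho_1$ and hence from $Z$), but it is worth stating, as otherwise a tail could in principle sit in a narrow component of $X\setminus Z$ inside the correct $V_i$ and the bijection alone would not locate it.
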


\begin{proof}
    Fix a pair of witnesses $(w_1, w_2)$ for $\rho_1$ and let $(u_1, u_2)$ be a pair of witnesses for $\rho_2$. 
    Write $R = \dHaus(\rho_1, \rho_2)$. Let $Z = N_R(\rho_1)$, so $\rho_2 \subset Z$. By \property{} and Remark~\ref{rmk:nbhds-bisect} we have that $X \setminus N_\sigma(Z)$ contains exactly two wide connected components. Trivially, we have that $N_\sigma(\rho_1)$ and $ N_\sigma(\rho_2)$ are contained within $N_\sigma(Z)$, and so 
    $
    X\setminus N_\sigma(Z) \subset X \setminus N_\sigma(\rho_i)
    $
    for both $i =1,2$. 

    Note that the $w_i$ diverge from $\rho_1$, and thus diverge from $Z$. Thus there are infinite subpaths $w_1'$, $w_2'$ of $w_1$ and $w_2$ respectively which are contained in $X \setminus N_\sigma(Z)$. By Lemma~\ref{lem:witness-implies-wide}, we have that each of the $w_i'$ is contained in a wide component of  $X\setminus N_\sigma(Z)$. They are certainly contained in distinct components, and so $(w_1', w_2')$ is a pair of witnesses for $Z$. Similarly, there exists infinite subpaths $u_1'$, $u_2'$ of $u_1$ and $u_2$ respectively such that $(u_1', u_2')$ is also a pair of witnesses for $Z$. 
    
    Recall that $X \setminus N_\sigma(Z)$ contains exactly two wide connected components. In particular, without loss of generality we may assume that $w_1'$ and $u_1'$ are contained in the same wide component of $X\setminus N_\sigma(Z)$, as are $w_2'$ and $u_2'$. 
    Now, $X\setminus N_\sigma(Z) \subset X \setminus N_\sigma(\rho_2)$, and so $w_i'$ is contained in the same component of $X \setminus N_\sigma(\rho_2)$ as $u_i$. It follows now that $(w_1', w_2')$ is a pair of witnesses for $\rho_2$.  
\end{proof}

\begin{remark}
    When applying the witness protection lemma later in this paper, it will quickly become tedious to say that `there exist infinite tails $w_1'$ and $w_2'$ of $w_1$ and $w_2$ respectively such that $(w_1', w_2')$ is a pair of witnesses for $\rho_2$...'. We will thus sometimes abuse terminology and say `...$(w_1, w_2)$ is also a pair of witnesses for $\rho_2$' to mean the above, even if $w_1$ and $w_2$ are not necessarily completely contained in the corresponding component they are witnessing. If there is any risk of ambiguity then we will be more precise in our language. 
\end{remark}

We now present two applications of the witness protection lemma~(\ref{lem:witness-protection}). 

\begin{lemma}\label{lem:witness-intersects-q}
    Consider the following scenario:
    \begin{enumerate}
        \item Let $\rho$ be a simple bi-infinite quasi-geodesic.

        \item Let $U_1, U_2 \subset X\setminus N_\sigma(\rho)$ denote the two wide complementary components.

        \item Let $(w_1, w_2)$ be a pair of witnesses for $\rho$, with $ w_i \subset U_i$. 

        \item Let $p$ be a path connecting $w_1(0)$ to $w_2(0)$, and write $K = N_{10\sigma}(p)$.

        \item Let $\rho_+, \rho_-$ denote the two infinite connected components of $\rho \setminus K$.

        \item Finally, let $q$ be a path connecting $\rho_+$ to $\rho-$ which is disjoint from $K$.
    \end{enumerate}
    Then either $w_1$ or $w_2$ must intersect $N_\sigma(q)$ 
\end{lemma}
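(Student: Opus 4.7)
The plan is to argue by contradiction. Suppose $w_1 \cup w_2$ is disjoint from $N_\sigma(q)$; I will produce an impossible configuration via Lemma~\ref{lem:witness-protection}. The strategy is to build a bi-infinite quasi-geodesic $\tilde\rho$ that agrees with $\rho$ far from $K$ but detours around $K$ through $q$. I will then show that, under the contradictory assumption, the concatenation $w_1^{-1} \cdot p \cdot w_2$ joins the witnesses inside a single complementary component of $N_\sigma(\tilde\rho)$, contradicting the fact that Lemma~\ref{lem:witness-protection} places them in distinct wide components.

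For the construction, let $x_\pm \in \rho_\pm$ denote the endpoints of $q$, and let $\rho_\pm^*$ denote the infinite subrays of $\rho$ starting at $x_\pm$ and heading away from $K$. Set $\tilde\rho = \rho_-^* \cdot q \cdot \rho_+^*$, parametrised at unit speed. Since $q$ has finite length and the removed middle segment of $\rho$ (between $x_-$ and $x_+$, contained in $K$) has diameter at most $\diam(K)$, we have $\dHaus(\tilde\rho, \rho) < \infty$. The technical heart of the argument is to verify that $\tilde\rho$ is a bi-infinite quasi-geodesic. When $\length(q)$ is comparable to $\dist(x_-, x_+)$ this is immediate, but for an arbitrary, possibly wandering, $q$ the naive concatenation may fail to be quasi-geodetic. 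I expect this to be the main obstacle; it can likely be sidestepped either by first reducing to the case where $q$ is shortest in $X \setminus K$ among paths from $\rho_+$ to $\rho_-$, or by applying a variant of Remark~\ref{rmk:nbhds-bisect} to a thickening of $\rho \cup q$ in place of a thickening of $\rho$ alone.

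Granting this quasi-geodesic property, Lemma~\ref{lem:witness-protection} yields infinite tails $w_1', w_2'$ of the witnesses that form a pair of witnesses for $\tilde\rho$, and hence lie in distinct wide components of $X \setminus N_\sigma(\tilde\rho)$. To derive the contradiction, I exhibit a path in $X \setminus N_\sigma(\tilde\rho)$ joining $w_1'$ to $w_2'$. First, each of the three pieces $\rho_-^*, q, \rho_+^*$ of $\tilde\rho$ lies in $X \setminus K$, so $\dist(p, \tilde\rho) \geq 10\sigma$ and therefore $p \subset X \setminus N_\sigma(\tilde\rho)$. Second, $w_1 \cup w_2$ avoids $N_\sigma(\rho) \supset N_\sigma(\rho_\pm^*)$ because $(w_1, w_2)$ witness $\rho$, and avoids $N_\sigma(q)$ by the contradictory assumption, hence $w_1 \cup w_2 \subset X \setminus N_\sigma(\tilde\rho)$. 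Consequently $w_1^{-1} \cdot p \cdot w_2$ is a path in $X \setminus N_\sigma(\tilde\rho)$ meeting both $w_1'$ and $w_2'$, forcing them into a single component and contradicting the previous paragraph. It follows that $w_1$ or $w_2$ must intersect $N_\sigma(q)$.
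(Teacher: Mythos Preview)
Your approach is essentially identical to the paper's: argue by contradiction, replace the middle of $\rho$ by $q$ to get a new bi-infinite quasi-geodesic $\tilde\rho$ at finite Hausdorff distance from $\rho$, apply the witness protection lemma, and then observe that $w_1^{-1}\cdot p\cdot w_2$ connects the two witnesses while avoiding $N_\sigma(\tilde\rho)$.

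The one place where you diverge from the paper is your worry that $\tilde\rho$ ``may fail to be quasi-geodesic'' for a long or wandering $q$. This concern is misplaced, and the paper simply asserts the quasi-geodesic property without comment. The point is elementary: if $\rho$ is a $(\lambda,c)$-quasi-geodesic and you excise a finite middle arc and splice in any finite path $q$ of length $L$, the result is automatically a $(\lambda, c')$-quasi-geodesic for some $c'$ depending on $L$ and the original constants (a short triangle-inequality check suffices). The constants degrade with $L$, but that is harmless here because \property{} is stated for \emph{all} bi-infinite quasi-geodesics with a single separation constant $\sigma$, and the witness protection lemma likewise places no restriction on the quasi-geodesic constants of $\rho_2$. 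So there is no obstacle to overcome, no need to minimise $q$, and no need to invoke Remark~\ref{rmk:nbhds-bisect}. Once you drop that paragraph, your argument is the paper's argument verbatim.
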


\begin{proof}
    Suppose this were not the case. We then modify $\rho$ to produce a new quasi-geodesic $\rho'$ by connecting the tails of $\rho_-$ and $\rho_+$ together with $q$. This is clearly also a bi-infinite quasi-geodesic, and lies a finite Hausdorff distance from $\rho$. Thus, by the witness protection lemma~(\ref{lem:witness-protection}) we must also have that $(w_1', w_2')$ is a pair of witnesses for $\rho'$, for some tails $w_i' \subset w_i$. However, $N_{\sigma}(\rho')$ is disjoint from the union $w_1 \cup p \cup w_2$. In particular, any two tails of $w_1$, $w_2$ lie in the same connected component of $X \setminus N_\sigma(\rho')$. This is a contradiction. See Figure~\ref{fig:witness-protection-first-app} for a cartoon.
\end{proof}

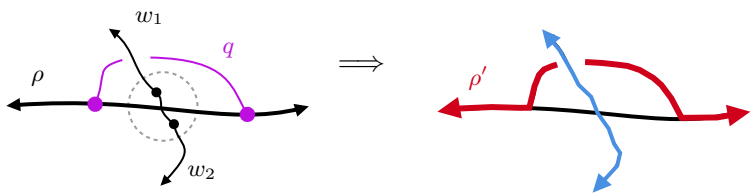
\begin{figure}[ht]
    \centering
    \tikzset{every picture/.style={line width=0.75pt}} 

\begin{tikzpicture}[x=0.75pt,y=0.75pt,yscale=-1,xscale=1]

\draw  [color={rgb, 255:red, 155; green, 155; blue, 155 }  ,draw opacity=1 ][dash pattern={on 1.5pt off 1.5pt}] (213.25,197) .. controls (213.25,187.47) and (220.97,179.75) .. (230.5,179.75) .. controls (240.03,179.75) and (247.75,187.47) .. (247.75,197) .. controls (247.75,206.53) and (240.03,214.25) .. (230.5,214.25) .. controls (220.97,214.25) and (213.25,206.53) .. (213.25,197) -- cycle ;
\draw [line width=1.5]    (156.72,196.32) .. controls (225.51,191.33) and (260.83,207.76) .. (299.71,197.93) ;
\draw [shift={(303.33,196.94)}, rotate = 163.62] [fill={rgb, 255:red, 0; green, 0; blue, 0 }  ][line width=0.08]  [draw opacity=0] (6.97,-3.35) -- (0,0) -- (6.97,3.35) -- cycle    ;
\draw [shift={(152.33,196.67)}, rotate = 355.19] [fill={rgb, 255:red, 0; green, 0; blue, 0 }  ][line width=0.08]  [draw opacity=0] (6.97,-3.35) -- (0,0) -- (6.97,3.35) -- cycle    ;
\draw [color={rgb, 255:red, 189; green, 16; blue, 224 }  ,draw opacity=1 ]   (224.25,172.38) .. controls (264.58,173.19) and (264.67,187.74) .. (272.67,200.99) ;
\draw [shift={(272.67,200.99)}, rotate = 58.89] [color={rgb, 255:red, 189; green, 16; blue, 224 }  ,draw opacity=1 ][fill={rgb, 255:red, 189; green, 16; blue, 224 }  ,fill opacity=1 ][line width=0.75]      (0, 0) circle [x radius= 3.35, y radius= 3.35]   ;
\draw    (236,205.88) .. controls (238.3,218.14) and (249.2,216.05) .. (231.31,234.76) ;
\draw [shift={(229.25,236.88)}, rotate = 314.66] [fill={rgb, 255:red, 0; green, 0; blue, 0 }  ][line width=0.08]  [draw opacity=0] (5.36,-2.57) -- (0,0) -- (5.36,2.57) -- cycle    ;
\draw [color={rgb, 255:red, 189; green, 16; blue, 224 }  ,draw opacity=1 ]   (210.25,173.63) .. controls (195.75,177.88) and (203,180.71) .. (196.67,195.86) ;
\draw [shift={(196.67,195.86)}, rotate = 112.69] [color={rgb, 255:red, 189; green, 16; blue, 224 }  ,draw opacity=1 ][fill={rgb, 255:red, 189; green, 16; blue, 224 }  ,fill opacity=1 ][line width=0.75]      (0, 0) circle [x radius= 3.35, y radius= 3.35]   ;
\draw    (227.25,189.88) .. controls (214.43,179.44) and (217.8,169.08) .. (205.76,159.53) ;
\draw [shift={(203.5,157.88)}, rotate = 34.14] [fill={rgb, 255:red, 0; green, 0; blue, 0 }  ][line width=0.08]  [draw opacity=0] (5.36,-2.57) -- (0,0) -- (5.36,2.57) -- cycle    ;
\draw [line width=1.5]    (373.72,198.57) .. controls (442.51,193.58) and (477.83,210.01) .. (516.71,200.18) ;
\draw [shift={(520.33,199.19)}, rotate = 163.62] [fill={rgb, 255:red, 0; green, 0; blue, 0 }  ][line width=0.08]  [draw opacity=0] (6.97,-3.35) -- (0,0) -- (6.97,3.35) -- cycle    ;
\draw [shift={(369.33,198.92)}, rotate = 355.19] [fill={rgb, 255:red, 0; green, 0; blue, 0 }  ][line width=0.08]  [draw opacity=0] (6.97,-3.35) -- (0,0) -- (6.97,3.35) -- cycle    ;
\draw    (441.25,174.63) .. controls (481.58,175.44) and (481.67,189.99) .. (489.67,203.24) ;
\draw    (453,208.13) .. controls (455.3,220.39) and (466.2,218.3) .. (448.31,237.01) ;
\draw [shift={(446.25,239.13)}, rotate = 314.66] [fill={rgb, 255:red, 0; green, 0; blue, 0 }  ][line width=0.08]  [draw opacity=0] (5.36,-2.57) -- (0,0) -- (5.36,2.57) -- cycle    ;
\draw    (427.25,175.88) .. controls (412.75,180.13) and (420,182.96) .. (413.67,198.11) ;
\draw    (444.25,192.13) .. controls (431.43,181.69) and (434.8,171.33) .. (422.76,161.78) ;
\draw [shift={(420.5,160.13)}, rotate = 34.14] [fill={rgb, 255:red, 0; green, 0; blue, 0 }  ][line width=0.08]  [draw opacity=0] (5.36,-2.57) -- (0,0) -- (5.36,2.57) -- cycle    ;
\draw    (444.25,192.13) -- (447.5,200.38) ;
\draw    (447.5,200.38) -- (453,208.13) ;
\draw [color={rgb, 255:red, 208; green, 2; blue, 27 }  ,draw opacity=1 ][line width=2.25]    (518.82,200.48) -- (507.01,202.53) -- (489.42,202.74) -- (484.25,192.88) -- (479.25,186.13) -- (475.25,182.63) -- (468.5,178.63) -- (462.5,176.63) -- (456.75,175.88) -- (449.75,175.13) -- (441.25,174.63) ;
\draw [shift={(523.75,199.63)}, rotate = 170.16] [fill={rgb, 255:red, 208; green, 2; blue, 27 }  ,fill opacity=1 ][line width=0.08]  [draw opacity=0] (11.43,-5.49) -- (0,0) -- (11.43,5.49) -- cycle    ;
\draw [color={rgb, 255:red, 74; green, 144; blue, 226 }  ,draw opacity=1 ][line width=1.5]    (422.31,161.2) -- (428.5,168.63) -- (435,180.13) -- (438,186.13) -- (444.25,192.13) -- (447.5,200.38) -- (453,208.13) -- (455.75,215.88) -- (458.75,220.38) -- (457.25,226.88) -- (452.5,232.13) -- (447.69,237.42) ;
\draw [shift={(445,240.38)}, rotate = 312.27] [fill={rgb, 255:red, 74; green, 144; blue, 226 }  ,fill opacity=1 ][line width=0.08]  [draw opacity=0] (9.29,-4.46) -- (0,0) -- (9.29,4.46) -- cycle    ;
\draw [shift={(419.75,158.13)}, rotate = 50.19] [fill={rgb, 255:red, 74; green, 144; blue, 226 }  ,fill opacity=1 ][line width=0.08]  [draw opacity=0] (9.29,-4.46) -- (0,0) -- (9.29,4.46) -- cycle    ;
\draw [color={rgb, 255:red, 208; green, 2; blue, 27 }  ,draw opacity=1 ][line width=2.25]    (427.25,175.88) -- (424.25,176.88) -- (420.75,178.38) -- (418.25,181.13) -- (416,186.88) -- (415,191.13) -- (413.42,197.61) -- (408.79,197.69) -- (394.77,197.3) -- (376,198.13) -- (372.42,198.76) ;
\draw [shift={(367.5,199.63)}, rotate = 349.99] [fill={rgb, 255:red, 208; green, 2; blue, 27 }  ,fill opacity=1 ][line width=0.08]  [draw opacity=0] (11.43,-5.49) -- (0,0) -- (11.43,5.49) -- cycle    ;
\draw    (227.25,189.88) .. controls (232.75,195.75) and (225,196.75) .. (236,205.88) ;
\draw [shift={(236,205.88)}, rotate = 39.68] [color={rgb, 255:red, 0; green, 0; blue, 0 }  ][fill={rgb, 255:red, 0; green, 0; blue, 0 }  ][line width=0.75]      (0, 0) circle [x radius= 2.01, y radius= 2.01]   ;
\draw [shift={(227.25,189.88)}, rotate = 46.89] [color={rgb, 255:red, 0; green, 0; blue, 0 }  ][fill={rgb, 255:red, 0; green, 0; blue, 0 }  ][line width=0.75]      (0, 0) circle [x radius= 2.01, y radius= 2.01]   ;

\draw (241.33,225.48) node [anchor=north west][inner sep=0.75pt]  [font=\footnotesize]  {$w_{2}$};
\draw (258.83,163.57) node [anchor=north west][inner sep=0.75pt]  [font=\footnotesize,color={rgb, 255:red, 189; green, 16; blue, 224 }  ,opacity=1 ]  {$q$};
\draw (163.25,177.4) node [anchor=north west][inner sep=0.75pt]  [font=\footnotesize]  {$\rho $};
\draw (316.33,172.4) node [anchor=north west][inner sep=0.75pt]    {$\Longrightarrow $};
\draw (381.5,175.4) node [anchor=north west][inner sep=0.75pt]  [font=\footnotesize,color={rgb, 255:red, 208; green, 2; blue, 27 }  ,opacity=1 ]  {$\rho '$};
\draw (215.08,147.73) node [anchor=north west][inner sep=0.75pt]  [font=\footnotesize]  {$w_{1}$};

\end{tikzpicture}
    \caption{Cartoon of the proof of Lemma~\ref{lem:witness-intersects-q}.}
    \label{fig:witness-protection-first-app}
\end{figure}

\begin{lemma}\label{lem:lies-in-bounded-component}
    Consider the following scenario:
    \begin{enumerate}
        \item Let $\rho$ be a simple bi-infinite quasi-geodesic.

        \item Let $U_1, U_2 \subset X\setminus N_\sigma(\rho)$ denote the two wide complementary components.

        \item Let $(w_1, w_2)$ be a pair of witnesses for $\rho$, with $ w_i \subset U_i$. 

        \item Let $p$ be a path connecting $w_1(0)$ to $w_2(0)$, and write $K = N_{10\sigma}(p)$.

        \item Let $\rho_+, \rho_-$ denote the two infinite connected components of $\rho \setminus K$.

        \item Let $q_1$ be a path connecting $\rho_+$ to $\rho-$ which is disjoint from $K$ and disjoint from $N_\sigma(w_2)$, and write $K' = N_{10\sigma}(q_1)$. 

        \item Let $\rho_+'$, $\rho_-'$ denote the two infinite connected components of $\rho \setminus K'$.

        \item Let $q_2$ be a path connecting $\rho_+$ to $\rho-$ which is disjoint from $K'$ and disjoint from $N_\sigma(w_2)$.

        \item Finally, let $s \subset \rho$ be a subpath connecting the endpoints of $q_2$, and let $\ell =  s^{-1} 
        \cdot q_2$. 
    \end{enumerate}
    Then the connected component of $w_1\setminus N_\sigma(\ell)$ containing $w_1(0)$ lies in a bounded component of $X \setminus N_\sigma(\ell)$.
\end{lemma}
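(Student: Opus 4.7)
The plan is to argue by contradiction using the witness protection lemma~(\ref{lem:witness-protection}) together with Lemma~\ref{lem:witness-intersects-q}. First I would introduce the rerouted quasi-geodesic $\rho^\dagger := (\rho \setminus s) \cup q_2$, obtained by excising the finite subpath $s$ from $\rho$ and gluing in $q_2$, which shares the same endpoints. Because both paths are finite and the modification is supported on a bounded set, $\rho^\dagger$ is a bi-infinite quasi-geodesic (with possibly worse constants) at finite Hausdorff distance from $\rho$. By the witness protection lemma, after passing to appropriate tails, $(w_1, w_2)$ is a pair of witnesses for $\rho^\dagger$. Write $V_1, V_2$ for the wide components of $X \setminus N_\sigma(\rho^\dagger)$ into which these tails fall. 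Combining the hypothesis $q_2 \cap N_\sigma(w_2) = \emptyset$ with the witness property $w_2 \cap N_\sigma(\rho) = \emptyset$ forces the entire ray $w_2$ into $V_2$, and also into $X \setminus N_\sigma(\ell)$.

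Now let $W$ denote the connected component of $X \setminus N_\sigma(\ell)$ containing $w_1(0)$, and suppose for contradiction that $W$ is unbounded. Then there exists an infinite simple path $\alpha \subset W$ starting at $w_1(0)$ which escapes every bounded set; by construction $\alpha$ avoids both $N_\sigma(s)$ and $N_\sigma(q_2)$. A key preliminary step is to verify that some tail of $\alpha$ lies in the wide component $U_1$ of $X \setminus N_\sigma(\rho)$: intuitively, $\alpha$ starts in $U_1$ and can only escape into $U_2$ through $N_\sigma(\rho \setminus s)$, and such a crossing can be ruled out or absorbed by a short local rerouting using the one-endedness of $X$ (Proposition~\ref{prop:one-ended}). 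Once this is done, $(\alpha, w_2)$ is a pair of witnesses for $\rho$ in the sense required by Lemma~\ref{lem:witness-intersects-q}.

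Applying Lemma~\ref{lem:witness-intersects-q} to $\rho$ with the new witness pair $(\alpha, w_2)$, the connecting path $p$, and the reroute $q_1$ (which by hypothesis is disjoint from $K = N_{10\sigma}(p)$ and from $N_\sigma(w_2)$), I deduce that $\alpha$ must intersect $N_\sigma(q_1)$. The contradiction is then extracted from the nesting of the reroutes: the hypothesis $q_2 \cap K' = \emptyset$, i.e.\ $d(q_1, q_2) > 10\sigma$, places $q_1$ ``inside'' the loop $\ell$, while $\alpha$ is forced to live ``outside'' $N_\sigma(\ell)$. Together with the witness protection structure for $\rho^\dagger$ obtained above, this means that any ray starting at a point of $\alpha \cap N_\sigma(q_1)$ and escaping to infinity while avoiding $N_\sigma(\ell)$ would end up in the same wide component of $X \setminus N_\sigma(\rho^\dagger)$ as $w_2$, contradicting the witness property for $\rho^\dagger$.

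The main obstacle I anticipate is the final topological/coarse-geometric argument using the nested pair $q_1, q_2$: one must carefully track where the tail of $\alpha$ ends up with respect to the rerouted quasi-geodesic $\rho^\dagger$, ruling out that it could slip through one of the narrow components of the complement. A secondary technical point is the preliminary verification that a tail of $\alpha$ lies in $U_1$; this is morally clear from the picture but requires a careful graph-theoretic argument given that $\alpha$ is only guaranteed to avoid $N_\sigma(s)$, not all of $N_\sigma(\rho)$.
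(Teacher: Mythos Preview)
Your overall strategy --- argue by contradiction, reroute $\rho$ through $q_2$ to form $\rho^\dagger$, and invoke the witness protection lemma --- matches the paper. The paper likewise passes to $\hat\rho = \rho^\dagger$ and uses witness protection. Where your proposal diverges from the paper, and where it has a genuine gap, is in how you handle the escaping path.

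You take an \emph{arbitrary} infinite simple path $\alpha \subset W$ and then try to argue that some tail of $\alpha$ lies in $U_1$ and diverges from $\rho$, so that $(\alpha, w_2)$ becomes a witness pair for $\rho$. This step is not ``morally clear'': nothing prevents $\alpha$ from eventually tracking along $N_\sigma(\rho_\pm'')$ forever, in which case it never diverges from $\rho$ and is never a witness. One-endedness alone does not rule this out. The paper avoids this by \emph{not} using a generic $\alpha$: since $X$ is one-ended and $N_\sigma(\ell)$ is bounded, the hypothetically unbounded $W$ is the unique unbounded component and hence contains tails of all four rays $w_1, w_2, \rho_+, \rho_-$. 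One then takes a path $b$ in $W$ from $w_1(0)$ to whichever of these four tails it reaches \emph{first}, so that $b$ stays away from the $2\sigma$-neighbourhoods of the other three. This forces a clean case split.

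In the case where $b$ connects to a tail $w_i'$, the contradiction is immediate and does not use $q_1$ at all: the concatenation $w_1' \cdot b^{-1} \cdot p \cdot w_2$ avoids $N_\sigma(\rho^\dagger)$, contradicting that $(w_1', w_2)$ are witnesses for $\rho^\dagger$. The case where $b$ connects to one of $\rho_\pm''$ is where $q_1$ actually enters, and this is where your ``nesting'' intuition is pointing. But the paper does not extract the contradiction topologically as you attempt; instead it \emph{constructs} yet another rerouted quasi-geodesic $\hat\rho$ by splicing together a tail of $\rho_-$, a subpath of $q_1$, a short jump, the initial segment of $w_1$ up to its first contact with $N_\sigma(q_1)$ (which exists by Lemma~\ref{lem:witness-intersects-q} applied to the \emph{original} $w_1$, not to $\alpha$), the path $b$, and a tail of $\rho_+$. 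One then exhibits an explicit path joining tails of $w_1$ and $w_2$ that avoids $N_\sigma(\hat\rho)$, contradicting witness protection. Your sketch of the final step (``any ray starting at a point of $\alpha \cap N_\sigma(q_1)$\ldots would end up in the same wide component as $w_2$'') does not supply this construction and, as stated, is not an argument.
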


\begin{proof}
    
Let $\rho_+''$, $\rho_-''$, $w_1'$, $w_2'$ denote the infinite connected components of $\rho_+ \setminus N_{\sigma}(\ell)$, and so on. 
        Suppose this were not true. Then there exists a path $b \subset X$ which avoids $\setminus N_{\sigma}(\ell)$, and connects $w_1(0)$ to either $w_1'$, $w_2'$, or one of the $\rho'_\pm$. Since the $\rho_\pm$ and $w_i$ are all pairwise far away from each other, we can assume that $b$ only passes near the one which it is connecting to, and avoids the $2\sigma$-neighbourhood of the others. 

        Assume first that $b$ connects to $w_1'$. Then, consider the quasi-geodesic $\hat \rho$ formed by replacing the middle segment of $\rho$ with $q_2$. Note that there is a path connecting $w_1(0)$ to $w_2$ which avoids $\hat \rho$. Note also that $q_2$ does not pass near $w_2$ at all. Now, by the witness protection lemma~(\ref{lem:witness-protection}) we must have that $(w_1', w_2)$ is a pair of witnesses for $\hat \rho$. However, the presence of $b$ creates a path from  $w_1'$ to $w_2$ which avoids $\hat \rho$. This is a contradiction, and so in this case we are done. See Figure~\ref{fig:An bounded case1} for a cartoon of this first case. The case where $b$ connects to $w_2'$ follows from a similar argument. 

        \begin{figure}[ht]
            \centering
            \tikzset{every picture/.style={line width=0.75pt}} 

\begin{tikzpicture}[x=0.75pt,y=0.75pt,yscale=-1,xscale=1]

\draw    (439.91,187.98) .. controls (442.45,172.15) and (428.53,163.2) .. (457.94,146.33) ;
\draw [line width=1.5]    (167.45,183.05) .. controls (238.27,180.23) and (271.73,196.16) .. (338.85,186.75) ;
\draw [shift={(341.94,186.3)}, rotate = 171.5] [fill={rgb, 255:red, 0; green, 0; blue, 0 }  ][line width=0.08]  [draw opacity=0] (6.97,-3.35) -- (0,0) -- (6.97,3.35) -- cycle    ;
\draw [shift={(163,183.25)}, rotate = 357.14] [fill={rgb, 255:red, 0; green, 0; blue, 0 }  ][line width=0.08]  [draw opacity=0] (6.97,-3.35) -- (0,0) -- (6.97,3.35) -- cycle    ;
\draw    (241.25,136.75) .. controls (248.34,132.12) and (254.95,138.32) .. (267.45,137.37) .. controls (279.95,136.42) and (301.98,136.27) .. (293.51,189.47) ;
\draw [shift={(293.51,189.47)}, rotate = 99.04] [color={rgb, 255:red, 0; green, 0; blue, 0 }  ][fill={rgb, 255:red, 0; green, 0; blue, 0 }  ][line width=0.75]      (0, 0) circle [x radius= 2.68, y radius= 2.68]   ;
\draw    (213.41,183.98) .. controls (215.95,168.15) and (202.03,159.2) .. (231.44,142.33) ;
\draw [shift={(213.41,183.98)}, rotate = 279.11] [color={rgb, 255:red, 0; green, 0; blue, 0 }  ][fill={rgb, 255:red, 0; green, 0; blue, 0 }  ][line width=0.75]      (0, 0) circle [x radius= 2.68, y radius= 2.68]   ;
\draw    (257.42,197.73) .. controls (259.39,208.69) and (265.04,208.69) .. (261.38,222.24) ;
\draw [shift={(260.56,225.01)}, rotate = 287.7] [fill={rgb, 255:red, 0; green, 0; blue, 0 }  ][line width=0.08]  [draw opacity=0] (5.36,-2.57) -- (0,0) -- (5.36,2.57) -- cycle    ;
\draw [color={rgb, 255:red, 155; green, 155; blue, 155 }  ,draw opacity=1 ] [dash pattern={on 2.25pt off 2.25pt}]  (254.92,179.5) .. controls (270,169.75) and (311.5,147.25) .. (299.5,144.5) .. controls (287.5,141.75) and (259.4,169.98) .. (259.25,136.25) ;
\draw [line width=1.5]    (259.25,136.25) .. controls (254.57,120.79) and (278.87,119.57) .. (254.51,95.69) ;
\draw [shift={(251.67,93)}, rotate = 42.42] [fill={rgb, 255:red, 0; green, 0; blue, 0 }  ][line width=0.08]  [draw opacity=0] (4.64,-2.23) -- (0,0) -- (4.64,2.23) -- cycle    ;
\draw [shift={(259.25,136.25)}, rotate = 253.15] [color={rgb, 255:red, 0; green, 0; blue, 0 }  ][fill={rgb, 255:red, 0; green, 0; blue, 0 }  ][line width=1.5]      (0, 0) circle [x radius= 1.74, y radius= 1.74]   ;
\draw    (254.92,179.5) -- (260.25,188.25) ;
\draw [shift={(260.25,188.25)}, rotate = 58.64] [color={rgb, 255:red, 0; green, 0; blue, 0 }  ][fill={rgb, 255:red, 0; green, 0; blue, 0 }  ][line width=0.75]      (0, 0) circle [x radius= 1.34, y radius= 1.34]   ;
\draw [shift={(254.92,179.5)}, rotate = 58.64] [color={rgb, 255:red, 0; green, 0; blue, 0 }  ][fill={rgb, 255:red, 0; green, 0; blue, 0 }  ][line width=0.75]      (0, 0) circle [x radius= 1.34, y radius= 1.34]   ;
\draw    (260.25,188.25) -- (257.42,197.73) ;
\draw [shift={(257.42,197.73)}, rotate = 106.62] [color={rgb, 255:red, 0; green, 0; blue, 0 }  ][fill={rgb, 255:red, 0; green, 0; blue, 0 }  ][line width=0.75]      (0, 0) circle [x radius= 1.34, y radius= 1.34]   ;
\draw [shift={(260.25,188.25)}, rotate = 106.62] [color={rgb, 255:red, 0; green, 0; blue, 0 }  ][fill={rgb, 255:red, 0; green, 0; blue, 0 }  ][line width=0.75]      (0, 0) circle [x radius= 1.34, y radius= 1.34]   ;
\draw [color={rgb, 255:red, 208; green, 2; blue, 27 }  ,draw opacity=1 ]   (264.25,109.5) .. controls (220.25,128.5) and (237,146.5) .. (254.92,179.5) ;
\draw [shift={(254.92,179.5)}, rotate = 61.5] [color={rgb, 255:red, 208; green, 2; blue, 27 }  ,draw opacity=1 ][fill={rgb, 255:red, 208; green, 2; blue, 27 }  ,fill opacity=1 ][line width=0.75]      (0, 0) circle [x radius= 1.34, y radius= 1.34]   ;
\draw [shift={(264.25,109.5)}, rotate = 156.64] [color={rgb, 255:red, 208; green, 2; blue, 27 }  ,draw opacity=1 ][fill={rgb, 255:red, 208; green, 2; blue, 27 }  ,fill opacity=1 ][line width=0.75]      (0, 0) circle [x radius= 1.34, y radius= 1.34]   ;
\draw [line width=0.75]    (392.85,187.1) .. controls (464.78,184.05) and (498.22,200.37) .. (566.37,190.61) ;
\draw [shift={(568.44,190.3)}, rotate = 171.5] [fill={rgb, 255:red, 0; green, 0; blue, 0 }  ][line width=0.08]  [draw opacity=0] (5.36,-2.57) -- (0,0) -- (5.36,2.57) -- cycle    ;
\draw [shift={(389.5,187.25)}, rotate = 357.14] [fill={rgb, 255:red, 0; green, 0; blue, 0 }  ][line width=0.08]  [draw opacity=0] (5.36,-2.57) -- (0,0) -- (5.36,2.57) -- cycle    ;
\draw    (483.92,201.73) .. controls (485.89,212.69) and (491.54,212.69) .. (487.88,226.24) ;
\draw [shift={(487.06,229.01)}, rotate = 287.7] [fill={rgb, 255:red, 0; green, 0; blue, 0 }  ][line width=0.08]  [draw opacity=0] (5.36,-2.57) -- (0,0) -- (5.36,2.57) -- cycle    ;
\draw [line width=0.75]    (485.75,140.25) .. controls (481.02,124.63) and (505.87,123.55) .. (480.24,98.94) ;
\draw [shift={(478.17,97)}, rotate = 42.42] [fill={rgb, 255:red, 0; green, 0; blue, 0 }  ][line width=0.08]  [draw opacity=0] (3.57,-1.72) -- (0,0) -- (3.57,1.72) -- cycle    ;
\draw    (481.42,183.5) -- (486.75,192.25) ;
\draw    (486.75,192.25) -- (483.92,201.73) ;
\draw [color={rgb, 255:red, 0; green, 0; blue, 0 }  ,draw opacity=1 ]   (490.75,113.5) .. controls (446.75,132.5) and (463.5,150.5) .. (481.42,183.5) ;
\draw    (467.75,140.75) .. controls (474.84,136.12) and (481.45,142.32) .. (493.95,141.37) .. controls (506.45,140.42) and (528.48,140.27) .. (520.01,193.47) ;
\draw [color={rgb, 255:red, 208; green, 2; blue, 27 }  ,draw opacity=1 ][line width=2.25]    (566.52,190.96) -- (545.1,192.97) -- (520.01,193.47) -- (521.59,179.32) -- (522.17,169.65) -- (520.42,157) -- (515.46,147.57) -- (506.7,142.11) -- (497.36,140.63) -- (490.75,141.25) -- (482.25,139.75) -- (477.75,138.5) -- (472.75,138.75) -- (465.91,140.38) ;
\draw [shift={(571.5,190.49)}, rotate = 174.63] [fill={rgb, 255:red, 208; green, 2; blue, 27 }  ,fill opacity=1 ][line width=0.08]  [draw opacity=0] (11.43,-5.49) -- (0,0) -- (11.43,5.49) -- cycle    ;
\draw [color={rgb, 255:red, 208; green, 2; blue, 27 }  ,draw opacity=1 ][line width=2.25]    (457.94,146.33) -- (447.74,152.78) -- (441.91,159.73) -- (438.69,165.93) -- (439.57,174.36) -- (440.74,180.81) -- (439.91,187.98) -- (427.6,186.77) -- (410.84,186.27) -- (403,186.5) -- (391.74,187.02) ;
\draw [shift={(386.75,187.25)}, rotate = 357.36] [fill={rgb, 255:red, 208; green, 2; blue, 27 }  ,fill opacity=1 ][line width=0.08]  [draw opacity=0] (11.43,-5.49) -- (0,0) -- (11.43,5.49) -- cycle    ;
\draw [color={rgb, 255:red, 74; green, 144; blue, 226 }  ,draw opacity=1 ][line width=1.5]    (477.83,97.08) -- (487,106.25) -- (490.75,113.5) -- (479.25,118.5) -- (471,124.25) -- (467,129) -- (464,134.25) -- (462.75,140) -- (462.75,143.75) -- (464.75,153) -- (470,163) -- (476,173.75) -- (483.75,187.75) -- (486.75,192.25) -- (483.92,201.73) -- (484.25,205.5) -- (486.75,209.75) -- (489.75,217) -- (487.77,227.08) ;
\draw [shift={(487,231)}, rotate = 281.11] [fill={rgb, 255:red, 74; green, 144; blue, 226 }  ,fill opacity=1 ][line width=0.08]  [draw opacity=0] (9.29,-4.46) -- (0,0) -- (9.29,4.46) -- cycle    ;
\draw [shift={(475,94.25)}, rotate = 45] [fill={rgb, 255:red, 74; green, 144; blue, 226 }  ,fill opacity=1 ][line width=0.08]  [draw opacity=0] (9.29,-4.46) -- (0,0) -- (9.29,4.46) -- cycle    ;

\draw (196,147.9) node [anchor=north west][inner sep=0.75pt]  [font=\footnotesize]  {$\ell$};
\draw (266.67,91.15) node [anchor=north west][inner sep=0.75pt]  [font=\footnotesize]  {$w'_{1}$};
\draw (177.25,187.9) node [anchor=north west][inner sep=0.75pt]  [font=\footnotesize]  {$\rho $};
\draw (223,115.9) node [anchor=north west][inner sep=0.75pt]  [font=\footnotesize,color={rgb, 255:red, 208; green, 2; blue, 27 }  ,opacity=1 ]  {$b$};
\draw (404.75,162.65) node [anchor=north west][inner sep=0.75pt]  [font=\footnotesize,color={rgb, 255:red, 208; green, 2; blue, 27 }  ,opacity=1 ]  {$\hat{\rho }$};
\draw (267.17,209.4) node [anchor=north west][inner sep=0.75pt]  [font=\footnotesize]  {$w_{2}$};
\draw (341.5,136.65) node [anchor=north west][inner sep=0.75pt]    {$\Longrightarrow $};
\draw (295.51,192.87) node [anchor=north west][inner sep=0.75pt]  [font=\footnotesize]  {$x$};
\draw (214.76,188.87) node [anchor=north west][inner sep=0.75pt]  [font=\footnotesize]  {$y$};

\end{tikzpicture}
            \caption{}
            \label{fig:An bounded case1}
        \end{figure}

        Secondly, let us assume that $b$ connects to $\rho_\pm''$. Without loss of generality, let us assume it connects to $\rho_+''$. In this case we modify $\rho$ and form $\hat \rho$ as follows. 
        Let $\hat \rho_-$ denote the subpath of $\rho_-$ connecting the infinite end of $\rho_-$ to $y_1$. 
        Let $w_1''$ be the maximal initial segment of $w_1$ which contains $w_1(0)$ and does not intersect $N_\sigma(q_1)$. Let $c_1$ be a subpath of $q_1$ which connects $y_1$ to a point in the $\sigma$-neighbourhood of the terminal point of $w_1''$. 
        Let $c_2$ be a geodesic of length $\sigma+1$ connecting the endpoints of $c_1$ and $w_1''$ together.  Finally, let $\hat \rho_+$ denote the tail of $\rho_+$ beginning at $x_n$. 
        Let 
        $$
        \hat \rho = \hat \rho_-^{-1} \cdot c_1 \cdot c_2 \cdot w_1''^{-1} \cdot b \cdot \hat \rho_+.
        $$
        Note that $\hat \rho$ avoids the ball $B$. 
        Once again, by the witness protection lemma~(\ref{lem:witness-protection}) we have that $(w_1, w_2)$ must still be a pair of witnesses for this $\hat \rho$, after possibly passing to tails of the $w_i$. However, this cannot be the case. Indeed, let $h_1$ denote a geodesic of length $\sigma+1$ connecting the initial vertex of $w_1'$ to $q_2$. Let $h_2$ denote the subpath of $\ell$ connecting the terminal point of $h_1$ to $u$, which does not pass through $\rho_-$. Now, let $h_3$ be a geodesic connecting $u$ to $w_2(0)$. We have that the bi-infinite path
        $$
        w_1'^{-1} \cdot h_1 \cdot h_2 \cdot h_3 \cdot w_2
        $$
        avoids the $\sigma$-neighbourhood of $\hat \rho$. Thus, it cannot be the case that any pair of tails of the $w_i$ constitute a pair of witnesses for $\hat \rho$. See Figure~\ref{fig:An bounded case2} for a cartoon of the contradiction reached in this second case. 
    \end{proof}

    \begin{figure}[ht]
            \centering
            \tikzset{every picture/.style={line width=0.75pt}} 

\begin{tikzpicture}[x=0.75pt,y=0.75pt,yscale=-1,xscale=1]

\draw [color={rgb, 255:red, 155; green, 155; blue, 155 }  ,draw opacity=1 ] [dash pattern={on 2.25pt off 2.25pt}]  (191.87,158.25) .. controls (191.87,136.5) and (171.5,179) .. (177.75,136.75) .. controls (184,94.5) and (199.37,163.5) .. (199.52,129.77) ;
\draw [line width=0.75]    (97.2,179.74) .. controls (160.32,175.07) and (215.4,193.32) .. (284.4,183.46) ;
\draw [shift={(286.5,183.15)}, rotate = 171.44] [fill={rgb, 255:red, 0; green, 0; blue, 0 }  ][line width=0.08]  [draw opacity=0] (5.36,-2.57) -- (0,0) -- (5.36,2.57) -- cycle    ;
\draw [shift={(93.29,180.06)}, rotate = 354.92] [fill={rgb, 255:red, 0; green, 0; blue, 0 }  ][line width=0.08]  [draw opacity=0] (5.36,-2.57) -- (0,0) -- (5.36,2.57) -- cycle    ;
\draw    (176.89,182.08) .. controls (164.39,160.36) and (191.1,157.35) .. (204.81,156.92) .. controls (218.51,156.49) and (232.38,167.88) .. (217.65,185.95) ;
\draw    (161.82,145.95) .. controls (166.44,133.26) and (162.33,136.27) .. (191.28,131.33) .. controls (220.22,126.38) and (246.08,132.19) .. (237.52,186.38) ;
\draw    (156.51,180.79) .. controls (159.08,164.66) and (151.64,170.16) .. (159,154.25) ;
\draw    (195.32,184.1) .. controls (191.53,200.93) and (203.94,203.32) .. (200.83,217.4) ;
\draw [shift={(200.03,220.25)}, rotate = 288.44] [fill={rgb, 255:red, 0; green, 0; blue, 0 }  ][line width=0.08]  [draw opacity=0] (5.36,-2.57) -- (0,0) -- (5.36,2.57) -- cycle    ;
\draw    (203.62,176) .. controls (210.87,159) and (190.62,179.5) .. (191.87,158.25) ;
\draw [shift={(191.87,158.25)}, rotate = 273.37] [color={rgb, 255:red, 0; green, 0; blue, 0 }  ][fill={rgb, 255:red, 0; green, 0; blue, 0 }  ][line width=0.75]      (0, 0) circle [x radius= 1.34, y radius= 1.34]   ;
\draw [shift={(203.62,176)}, rotate = 293.1] [color={rgb, 255:red, 0; green, 0; blue, 0 }  ][fill={rgb, 255:red, 0; green, 0; blue, 0 }  ][line width=0.75]      (0, 0) circle [x radius= 1.34, y radius= 1.34]   ;
\draw    (199.52,129.77) .. controls (204.25,114.15) and (179.17,115.9) .. (204.79,91.43) ;
\draw [shift={(206.87,89.5)}, rotate = 137.58] [fill={rgb, 255:red, 0; green, 0; blue, 0 }  ][line width=0.08]  [draw opacity=0] (5.36,-2.57) -- (0,0) -- (5.36,2.57) -- cycle    ;
\draw    (203.62,176) -- (195.65,184.1) ;
\draw [shift={(195.65,184.1)}, rotate = 134.53] [color={rgb, 255:red, 0; green, 0; blue, 0 }  ][fill={rgb, 255:red, 0; green, 0; blue, 0 }  ][line width=0.75]      (0, 0) circle [x radius= 1.34, y radius= 1.34]   ;
\draw [line width=0.75]    (350.45,183.24) .. controls (413.57,178.57) and (468.65,196.82) .. (537.65,186.96) ;
\draw [shift={(539.75,186.65)}, rotate = 171.44] [fill={rgb, 255:red, 0; green, 0; blue, 0 }  ][line width=0.08]  [draw opacity=0] (5.36,-2.57) -- (0,0) -- (5.36,2.57) -- cycle    ;
\draw [shift={(346.54,183.56)}, rotate = 354.92] [fill={rgb, 255:red, 0; green, 0; blue, 0 }  ][line width=0.08]  [draw opacity=0] (5.36,-2.57) -- (0,0) -- (5.36,2.57) -- cycle    ;
\draw    (430.14,185.58) .. controls (417.64,163.86) and (444.35,160.85) .. (458.06,160.42) .. controls (471.76,159.99) and (485.63,171.38) .. (470.9,189.45) ;
\draw    (415.07,149.45) .. controls (419.69,136.76) and (415.58,139.77) .. (444.53,134.83) .. controls (473.47,129.88) and (499.33,135.69) .. (490.77,189.88) ;
\draw    (409.76,184.29) .. controls (412.33,168.16) and (405.62,172.43) .. (412.98,156.52) ;
\draw    (448.57,187.6) .. controls (444.78,204.43) and (457.19,206.82) .. (454.08,220.9) ;
\draw [shift={(453.28,223.75)}, rotate = 288.44] [fill={rgb, 255:red, 0; green, 0; blue, 0 }  ][line width=0.08]  [draw opacity=0] (5.36,-2.57) -- (0,0) -- (5.36,2.57) -- cycle    ;
\draw    (456.53,179.5) .. controls (463.78,162.5) and (443.53,183) .. (444.78,161.75) ;
\draw [shift={(444.78,161.75)}, rotate = 273.37] [color={rgb, 255:red, 0; green, 0; blue, 0 }  ][fill={rgb, 255:red, 0; green, 0; blue, 0 }  ][line width=0.75]      (0, 0) circle [x radius= 1.34, y radius= 1.34]   ;
\draw [shift={(456.53,179.5)}, rotate = 293.1] [color={rgb, 255:red, 0; green, 0; blue, 0 }  ][fill={rgb, 255:red, 0; green, 0; blue, 0 }  ][line width=0.75]      (0, 0) circle [x radius= 1.34, y radius= 1.34]   ;
\draw    (452.43,133.27) .. controls (457.16,117.65) and (432.09,119.4) .. (457.71,94.93) ;
\draw [shift={(459.78,93)}, rotate = 137.58] [fill={rgb, 255:red, 0; green, 0; blue, 0 }  ][line width=0.08]  [draw opacity=0] (5.36,-2.57) -- (0,0) -- (5.36,2.57) -- cycle    ;
\draw    (456.53,179.5) -- (448.57,187.6) ;
\draw [shift={(448.57,187.6)}, rotate = 134.53] [color={rgb, 255:red, 0; green, 0; blue, 0 }  ][fill={rgb, 255:red, 0; green, 0; blue, 0 }  ][line width=0.75]      (0, 0) circle [x radius= 1.34, y radius= 1.34]   ;
\draw [color={rgb, 255:red, 208; green, 2; blue, 27 }  ,draw opacity=1 ][line width=2.25]    (538.72,187.3) -- (515.91,189.37) -- (490.77,189.88) -- (470.9,189.45) -- (474.96,181.79) -- (477,176.48) -- (476.13,169.15) -- (471.15,163.34) -- (462.38,160.31) -- (454.63,159.99) -- (444.78,161.75) -- (444.75,166.25) -- (446.25,170.5) -- (449.75,172.25) -- (454.5,172.75) -- (456.75,173.25) -- (458.5,175.25) -- (456.53,179.5) -- (447,177) -- (439.5,173.5) -- (431.25,163.75) -- (422.5,158.5) -- (412.53,152.5) -- (402.53,150.25) -- (392.03,147) -- (386.75,146.75) -- (382.53,149.75) -- (380.78,157) -- (380.53,165.25) -- (381.06,173.45) -- (381.36,182.55) -- (358.25,182.8) -- (349.78,183.28) ;
\draw [shift={(344.78,183.56)}, rotate = 356.78] [fill={rgb, 255:red, 208; green, 2; blue, 27 }  ,fill opacity=1 ][line width=0.08]  [draw opacity=0] (11.43,-5.49) -- (0,0) -- (11.43,5.49) -- cycle    ;
\draw [shift={(543.7,186.84)}, rotate = 174.8] [fill={rgb, 255:red, 208; green, 2; blue, 27 }  ,fill opacity=1 ][line width=0.08]  [draw opacity=0] (11.43,-5.49) -- (0,0) -- (11.43,5.49) -- cycle    ;
\draw [color={rgb, 255:red, 74; green, 144; blue, 226 }  ,draw opacity=1 ][line width=1.5]    (454.06,221.78) -- (454.78,216.25) -- (453.53,210.25) -- (450.03,202.75) -- (447.53,195.75) -- (448.57,187.6) -- (434,186.59) -- (409.76,184.29) -- (410.89,176.73) -- (409.72,171.43) -- (409.43,166.37) -- (412.98,156.52) ;
\draw [shift={(453.53,225.75)}, rotate = 277.5] [fill={rgb, 255:red, 74; green, 144; blue, 226 }  ,fill opacity=1 ][line width=0.08]  [draw opacity=0] (9.29,-4.46) -- (0,0) -- (9.29,4.46) -- cycle    ;
\draw [color={rgb, 255:red, 74; green, 144; blue, 226 }  ,draw opacity=1 ][line width=1.5]    (458.28,94.15) -- (451.28,101.5) -- (445.78,109.5) -- (446.28,116) -- (451.53,124) -- (453.53,129) -- (452.43,133.27) -- (438.98,136.05) -- (424.06,138.07) -- (418.79,140.34) -- (415.07,149.45) ;
\draw [shift={(461.03,91.25)}, rotate = 133.57] [fill={rgb, 255:red, 74; green, 144; blue, 226 }  ,fill opacity=1 ][line width=0.08]  [draw opacity=0] (9.29,-4.46) -- (0,0) -- (9.29,4.46) -- cycle    ;
\draw [color={rgb, 255:red, 208; green, 2; blue, 27 }  ,draw opacity=1 ]   (128.11,179.05) .. controls (122,154.5) and (120.57,145.75) .. (138.78,143.5) .. controls (157,141.25) and (184.75,170.75) .. (203.62,176) ;
\draw [shift={(203.62,176)}, rotate = 15.55] [color={rgb, 255:red, 208; green, 2; blue, 27 }  ,draw opacity=1 ][fill={rgb, 255:red, 208; green, 2; blue, 27 }  ,fill opacity=1 ][line width=0.75]      (0, 0) circle [x radius= 2.01, y radius= 2.01]   ;
\draw [shift={(128.11,179.05)}, rotate = 256.03] [color={rgb, 255:red, 208; green, 2; blue, 27 }  ,draw opacity=1 ][fill={rgb, 255:red, 208; green, 2; blue, 27 }  ,fill opacity=1 ][line width=0.75]      (0, 0) circle [x radius= 2.01, y radius= 2.01]   ;

\draw (118.78,126.9) node [anchor=north west][inner sep=0.75pt]  [color={rgb, 255:red, 208; green, 2; blue, 27 }  ,opacity=1 ]  {$b$};
\draw (211.53,168.65) node [anchor=north west][inner sep=0.75pt]  [font=\tiny]  {$q_1$};
\draw (244.03,142.4) node [anchor=north west][inner sep=0.75pt]  [font=\footnotesize]  {$\ell$};
\draw (209.2,79.65) node [anchor=north west][inner sep=0.75pt]  [font=\footnotesize]  {$w'_{1}$};
\draw (209.28,208.4) node [anchor=north west][inner sep=0.75pt]  [font=\footnotesize]  {$w_{2}$};
\draw (300.03,141.9) node [anchor=north west][inner sep=0.75pt]    {$\Longrightarrow $};
\draw (525.25,195.95) node [anchor=north west][inner sep=0.75pt]  [color={rgb, 255:red, 208; green, 2; blue, 27 }  ,opacity=1 ]  {$\hat{\rho }$};

\end{tikzpicture}
            \caption{}
            \label{fig:An bounded case2}
        \end{figure}

\section{Cross-examination}\label{sec:crosshair}

In this section we construct a certain gadget, called a \emph{cross-examiner}. 
Their utility will lie in their ability to help us build separating loops with controlled interiors.

\begin{definition}[Cross-examiner]
    Let $\lambda \geq 1$, $c \geq 0$, and $R> r > 10\sigma$ be constants. Then an \emph{$(R, r, \lambda, c)$-cross-examiner} is a decuplet 
    $$
    \crosshair = (v_0;\gamma_1, \gamma_2,\gamma_3;w_1,w_2,w_3;q_1,q_2,q_3), 
    $$
    consisting of the following data:
    \begin{enumerate}
        \item A vertex $v_0 \in V(X)$, called the \emph{basepoint}.

        \item Three one-way quasi-geodesic rays $\gamma_1$, $\gamma_2$, $\gamma_3$ based at $v_0$.

        \item Three one-way infinite paths $w_1$, $w_2$, $w_3$ . 

        \item Three finite path segments $q_1$, $q_2$, $q_3$.
    \end{enumerate}
    This data further satisfies the following axioms for each $i \in \Z/3\Z$:
    \begin{enumerate}
        \item[(CE1)]\label{itm:ce1} The concatenation 
        $$
        \rho_{i} := \gamma_{i+1}^{-1} \cdot \gamma_{i+2}
        $$
        is a bi-infinite $(\lambda,c)$-quasi-geodesic.

        \item[(CE2)]\label{ce2} We have that $w_i(0)$ and $v_0$ lie in the same connected component of the complement $\rho_i \setminus N_{10\sigma}(q_1 \cup q_2 \cup q_3)$. 

        \item[(CE3)]\label{ce3} The pair $(w_i, \gamma_i)$ is a pair of witnesses for $\rho_i$.

        \item[(CE4)]\label{ce4} For all $t > r$, we have that $\dist_X(w_i(t), \rho_i) > 10\sigma$. 

        \item[(CE5)]\label{ce5} The path $q_i$ starts on $\gamma_{i+1}$ and ends on $\gamma_{i+2}$, and 
        $$
        q_i \cap N_{10\sigma}(w_{i+1} \cup w_{i+2} \cup \gamma_i) = \emptyset.
        $$

        \item[(CE6)]\label{ce6} We have that  $q_i \subset N_{R-10\sigma}(v_0) \setminus N_{r+10\sigma}(v_0)$. 

    \end{enumerate}
    
\end{definition}

Such an object is best visualised; see Figure~\ref{fig:crosshair} for a cartoon.

\begin{figure}[ht]
    \centering
    \tikzset{every picture/.style={line width=0.75pt}} 

\begin{tikzpicture}[x=0.75pt,y=0.75pt,yscale=-1,xscale=1]

\draw [color={rgb, 255:red, 155; green, 155; blue, 155 }  ,draw opacity=1 ]   (295.71,157.05) -- (320.88,145.15) ;
\draw [shift={(293,158.33)}, rotate = 334.69] [fill={rgb, 255:red, 155; green, 155; blue, 155 }  ,fill opacity=1 ][line width=0.08]  [draw opacity=0] (3.57,-1.72) -- (0,0) -- (3.57,1.72) -- cycle    ;
\draw [color={rgb, 255:red, 155; green, 155; blue, 155 }  ,draw opacity=1 ]   (391.69,127.42) -- (320.88,145.15) ;
\draw [shift={(394.6,126.69)}, rotate = 165.94] [fill={rgb, 255:red, 155; green, 155; blue, 155 }  ,fill opacity=1 ][line width=0.08]  [draw opacity=0] (3.57,-1.72) -- (0,0) -- (3.57,1.72) -- cycle    ;
\draw    (177.31,48.22) -- (320.88,145.15) ;
\draw [shift={(320.88,145.15)}, rotate = 34.03] [color={rgb, 255:red, 0; green, 0; blue, 0 }  ][fill={rgb, 255:red, 0; green, 0; blue, 0 }  ][line width=0.75]      (0, 0) circle [x radius= 2.68, y radius= 2.68]   ;
\draw [shift={(174.82,46.54)}, rotate = 34.03] [fill={rgb, 255:red, 0; green, 0; blue, 0 }  ][line width=0.08]  [draw opacity=0] (7.14,-3.43) -- (0,0) -- (7.14,3.43) -- cycle    ;
\draw    (464.77,50.15) -- (320.88,145.15) ;
\draw [shift={(320.88,145.15)}, rotate = 146.56] [color={rgb, 255:red, 0; green, 0; blue, 0 }  ][fill={rgb, 255:red, 0; green, 0; blue, 0 }  ][line width=0.75]      (0, 0) circle [x radius= 2.68, y radius= 2.68]   ;
\draw [shift={(467.27,48.5)}, rotate = 146.56] [fill={rgb, 255:red, 0; green, 0; blue, 0 }  ][line width=0.08]  [draw opacity=0] (7.14,-3.43) -- (0,0) -- (7.14,3.43) -- cycle    ;
\draw    (308.99,259.07) -- (320.88,145.15) ;
\draw [shift={(320.88,145.15)}, rotate = 275.96] [color={rgb, 255:red, 0; green, 0; blue, 0 }  ][fill={rgb, 255:red, 0; green, 0; blue, 0 }  ][line width=0.75]      (0, 0) circle [x radius= 2.68, y radius= 2.68]   ;
\draw [shift={(308.68,262.05)}, rotate = 275.96] [fill={rgb, 255:red, 0; green, 0; blue, 0 }  ][line width=0.08]  [draw opacity=0] (7.14,-3.43) -- (0,0) -- (7.14,3.43) -- cycle    ;
\draw [color={rgb, 255:red, 65; green, 117; blue, 5 }  ,draw opacity=1 ]   (169.17,161.29) .. controls (231.54,123.34) and (200.53,210) .. (308.02,137.31) ;
\draw [shift={(308.02,137.31)}, rotate = 325.93] [color={rgb, 255:red, 65; green, 117; blue, 5 }  ,draw opacity=1 ][fill={rgb, 255:red, 65; green, 117; blue, 5 }  ,fill opacity=1 ][line width=0.75]      (0, 0) circle [x radius= 2.01, y radius= 2.01]   ;
\draw [shift={(166.25,163.11)}, rotate = 327.36] [fill={rgb, 255:red, 65; green, 117; blue, 5 }  ,fill opacity=1 ][line width=0.08]  [draw opacity=0] (5.36,-2.57) -- (0,0) -- (5.36,2.57) -- cycle    ;
\draw [color={rgb, 255:red, 65; green, 117; blue, 5 }  ,draw opacity=1 ]   (488.64,198.38) .. controls (389.83,124.52) and (391.69,186.03) .. (318.9,159.52) ;
\draw [shift={(318.9,159.52)}, rotate = 200.01] [color={rgb, 255:red, 65; green, 117; blue, 5 }  ,draw opacity=1 ][fill={rgb, 255:red, 65; green, 117; blue, 5 }  ,fill opacity=1 ][line width=0.75]      (0, 0) circle [x radius= 2.01, y radius= 2.01]   ;
\draw [shift={(491.67,200.66)}, rotate = 217.22] [fill={rgb, 255:red, 65; green, 117; blue, 5 }  ,fill opacity=1 ][line width=0.08]  [draw opacity=0] (5.36,-2.57) -- (0,0) -- (5.36,2.57) -- cycle    ;
\draw [color={rgb, 255:red, 65; green, 117; blue, 5 }  ,draw opacity=1 ]   (307.94,56.47) .. controls (348.26,95.17) and (316.85,95.13) .. (329.12,139.93) ;
\draw [shift={(329.12,139.93)}, rotate = 74.67] [color={rgb, 255:red, 65; green, 117; blue, 5 }  ,draw opacity=1 ][fill={rgb, 255:red, 65; green, 117; blue, 5 }  ,fill opacity=1 ][line width=0.75]      (0, 0) circle [x radius= 2.01, y radius= 2.01]   ;
\draw [shift={(305.38,54.05)}, rotate = 42.96] [fill={rgb, 255:red, 65; green, 117; blue, 5 }  ,fill opacity=1 ][line width=0.08]  [draw opacity=0] (5.36,-2.57) -- (0,0) -- (5.36,2.57) -- cycle    ;
\draw [color={rgb, 255:red, 208; green, 2; blue, 27 }  ,draw opacity=1 ]   (270.44,111.19) .. controls (297.8,85.39) and (347.26,95.84) .. (362.42,118.7) ;
\draw [shift={(362.42,118.7)}, rotate = 56.43] [color={rgb, 255:red, 208; green, 2; blue, 27 }  ,draw opacity=1 ][fill={rgb, 255:red, 208; green, 2; blue, 27 }  ,fill opacity=1 ][line width=0.75]      (0, 0) circle [x radius= 1.34, y radius= 1.34]   ;
\draw [shift={(270.44,111.19)}, rotate = 316.69] [color={rgb, 255:red, 208; green, 2; blue, 27 }  ,draw opacity=1 ][fill={rgb, 255:red, 208; green, 2; blue, 27 }  ,fill opacity=1 ][line width=0.75]      (0, 0) circle [x radius= 1.34, y radius= 1.34]   ;
\draw [color={rgb, 255:red, 208; green, 2; blue, 27 }  ,draw opacity=1 ]   (352.53,124.91) .. controls (370.67,145.48) and (372.31,181.07) .. (315.61,189.23) ;
\draw [shift={(315.61,189.23)}, rotate = 171.81] [color={rgb, 255:red, 208; green, 2; blue, 27 }  ,draw opacity=1 ][fill={rgb, 255:red, 208; green, 2; blue, 27 }  ,fill opacity=1 ][line width=0.75]      (0, 0) circle [x radius= 1.34, y radius= 1.34]   ;
\draw [shift={(352.53,124.91)}, rotate = 48.6] [color={rgb, 255:red, 208; green, 2; blue, 27 }  ,draw opacity=1 ][fill={rgb, 255:red, 208; green, 2; blue, 27 }  ,fill opacity=1 ][line width=0.75]      (0, 0) circle [x radius= 1.34, y radius= 1.34]   ;
\draw [color={rgb, 255:red, 208; green, 2; blue, 27 }  ,draw opacity=1 ]   (283.62,120.66) .. controls (268.79,142.54) and (272.74,189.89) .. (314.62,204.58) ;
\draw [shift={(314.62,204.58)}, rotate = 19.34] [color={rgb, 255:red, 208; green, 2; blue, 27 }  ,draw opacity=1 ][fill={rgb, 255:red, 208; green, 2; blue, 27 }  ,fill opacity=1 ][line width=0.75]      (0, 0) circle [x radius= 1.34, y radius= 1.34]   ;
\draw [shift={(283.62,120.66)}, rotate = 124.14] [color={rgb, 255:red, 208; green, 2; blue, 27 }  ,draw opacity=1 ][fill={rgb, 255:red, 208; green, 2; blue, 27 }  ,fill opacity=1 ][line width=0.75]      (0, 0) circle [x radius= 1.34, y radius= 1.34]   ;
\draw  [color={rgb, 255:red, 155; green, 155; blue, 155 }  ,draw opacity=1 ][dash pattern={on 2.25pt off 1.5pt}] (289.71,145.15) .. controls (289.71,127.93) and (303.66,113.98) .. (320.88,113.98) .. controls (338.1,113.98) and (352.05,127.93) .. (352.05,145.15) .. controls (352.05,162.37) and (338.1,176.32) .. (320.88,176.32) .. controls (303.66,176.32) and (289.71,162.37) .. (289.71,145.15) -- cycle ;
\draw  [color={rgb, 255:red, 155; green, 155; blue, 155 }  ,draw opacity=1 ][dash pattern={on 2.25pt off 1.5pt}] (242.98,145.15) .. controls (242.98,102.13) and (277.86,67.25) .. (320.88,67.25) .. controls (363.9,67.25) and (398.78,102.13) .. (398.78,145.15) .. controls (398.78,188.17) and (363.9,223.05) .. (320.88,223.05) .. controls (277.86,223.05) and (242.98,188.17) .. (242.98,145.15) -- cycle ;

\draw (268.06,182.5) node [anchor=north west][inner sep=0.75pt]  [font=\scriptsize,color={rgb, 255:red, 208; green, 2; blue, 27 }  ,opacity=1 ]  {$q_{1}$};
\draw (340.93,186.09) node [anchor=north west][inner sep=0.75pt]  [font=\scriptsize,color={rgb, 255:red, 208; green, 2; blue, 27 }  ,opacity=1 ]  {$q_{2}$};
\draw (294.79,81.06) node [anchor=north west][inner sep=0.75pt]  [font=\scriptsize,color={rgb, 255:red, 208; green, 2; blue, 27 }  ,opacity=1 ]  {$q_{3}$};
\draw (323.6,50.91) node [anchor=north west][inner sep=0.75pt]  [font=\scriptsize,color={rgb, 255:red, 65; green, 117; blue, 5 }  ,opacity=1 ]  {$w_{3}$};
\draw (450.21,154.42) node [anchor=north west][inner sep=0.75pt]  [font=\scriptsize,color={rgb, 255:red, 65; green, 117; blue, 5 }  ,opacity=1 ]  {$w_{2}$};
\draw (187.43,159.97) node [anchor=north west][inner sep=0.75pt]  [font=\scriptsize,color={rgb, 255:red, 65; green, 117; blue, 5 }  ,opacity=1 ]  {$w_{1}$};
\draw (173.93,62.99) node [anchor=north west][inner sep=0.75pt]  [font=\scriptsize,color={rgb, 255:red, 0; green, 0; blue, 0 }  ,opacity=1 ]  {$\gamma _{2}$};
\draw (455.49,65.27) node [anchor=north west][inner sep=0.75pt]  [font=\scriptsize,color={rgb, 255:red, 0; green, 0; blue, 0 }  ,opacity=1 ]  {$\gamma _{1}$};
\draw (286.03,232.13) node [anchor=north west][inner sep=0.75pt]  [font=\scriptsize,color={rgb, 255:red, 0; green, 0; blue, 0 }  ,opacity=1 ]  {$\gamma _{3}$};
\draw (301.32,162.31) node [anchor=north west][inner sep=0.75pt]  [font=\tiny,color={rgb, 255:red, 155; green, 155; blue, 155 }  ,opacity=1 ]  {$r$};
\draw (385.12,139.56) node [anchor=north west][inner sep=0.75pt]  [font=\tiny,color={rgb, 255:red, 155; green, 155; blue, 155 }  ,opacity=1 ]  {$R$};

\end{tikzpicture}
    \caption{Cartoon of a cross-examiner.}
    \label{fig:crosshair}
\end{figure}
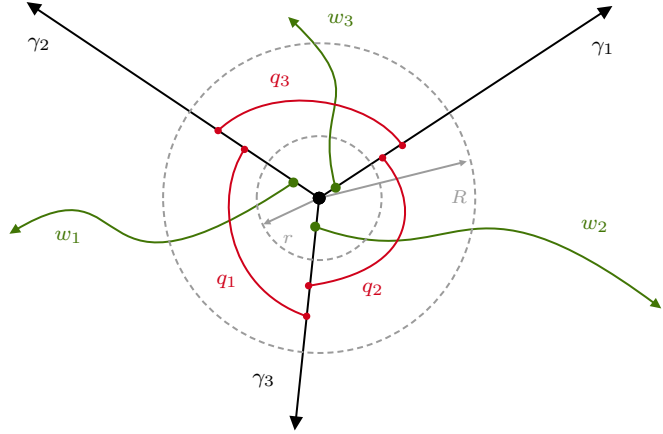

\begin{remark}
    Some remarks are in order.
    \begin{enumerate}
        \item The constant $R$ in the above definition is superfluous and does not impose any additional restrictions. Instead, it should be thought of as simply measuring the `radius' of our cross-examiner. We have chosen to include this constant in our notation for convenience and ease of reference.

        \item Note that there is something which is slightly misleading about this `planar' drawing of a cross-examiner in Figure~\ref{fig:crosshair}. That is, it is not immediately obvious from the definition that the three witnesses $w_i$ pairwise diverge from each other. For now, this does not bother us, but it will later turn out that this is indeed the case. This is the content of the \emph{witness separation lemma}~(\ref{lem:witness-separation}), found below.

        \item From this point on, we will simply write 
        $
        \crosshair = (v_0, \gamma_i,w_i,q_i)
        $
        to ease notation.
    \end{enumerate}
\end{remark}

Our immediate goal is to prove the existence of cross-examiners.

\begin{proposition}\label{prop:crosshair-exists}
     $X$ contains a cross-examiner.
\end{proposition}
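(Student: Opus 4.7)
The plan is to construct a cross-examiner at an arbitrary basepoint $v_0 \in V(X)$ in three main stages.

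First, I would build the underlying tripod and establish axiom~(CE1). Via a standard Arzel\`a--Ascoli argument applied to geodesic segments through $v_0$ between points going to infinity (using one-endedness, Proposition~\ref{prop:one-ended}), I obtain a bi-infinite geodesic $\ell$ through $v_0$; parametrising so that $\ell(0) = v_0$, I set $\gamma_1(t) = \ell(-t)$ and $\gamma_2(t) = \ell(t)$, so that $\rho_3 := \gamma_1^{-1} \cdot \gamma_2 = \ell$. By~\property{}, $X \setminus N_\sigma(\ell)$ has exactly two wide components $A$ and $B$. To construct $\gamma_3$, I choose a sequence $z_n \in B$ with $\dist(z_n, \ell) \to \infty$ and, crucially, with $\dist(v_0, z_n) - \dist(z_n, \ell)$ bounded uniformly by some constant $C$ in $n$ (such a sequence exists by Lemma~\ref{lem:wide-implies-witness} combined with a minimisation argument exploiting vertex-transitivity). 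I then take geodesics $\alpha_n$ from $v_0$ to $z_n$ and pass to a subsequential Arzel\`a--Ascoli limit $\gamma_3$. The bound on $\dist(v_0, z_n) - \dist(z_n, \ell)$ forces $\dist(\alpha_n(t), \ell) \geq t - C$ for all admissible $t$ and hence, in the limit, $\dist(\gamma_3(t), \ell) \geq t - C$; that is, $\gamma_3$ diverges linearly from $\ell$. This linear divergence is enough to conclude that $\rho_1 := \gamma_2^{-1} \cdot \gamma_3$ and $\rho_2 := \gamma_3^{-1} \cdot \gamma_1$ are bi-infinite $(\lambda, c)$-quasi-geodesics for some uniform $\lambda, c$, verifying~(CE1).

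Second, I would select witnesses to establish~(CE3) and~(CE4). Lemma~\ref{lem:wide-implies-witness} applied to $\rho_3$ produces a one-way infinite simple path $w_3 \subset A$ diverging from $\ell$; since $\gamma_3$ also diverges from $\ell$ and lies in $B$, the pair $(w_3, \gamma_3)$ is a pair of witnesses for $\rho_3$. Applying~\property{} and Lemma~\ref{lem:wide-implies-witness} to $\rho_1$ and $\rho_2$, and noting that the linear divergence of $\gamma_3$ from $\ell$ makes $\gamma_1$ (respectively $\gamma_2$) a witness on one side of $\rho_1$ (respectively $\rho_2$), I obtain witnesses $w_1, w_2$ on the opposite sides. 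The parameter $r$ is then chosen large enough that $\dist(w_i(t), \rho_i) > 10\sigma$ for all $t > r$ and all $i$, giving~(CE4).

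Third, for the connecting paths $q_i$ and the outer radius $R$: the six rays $\gamma_1, \gamma_2, \gamma_3, w_1, w_2, w_3$ pairwise diverge (again from the linear divergence implicit in the quasi-geodesic constants), so at sufficiently large distance from $v_0$ they are pairwise well-separated. Using one-endedness and local finiteness, one shows that three of these rays alone never disconnect $X$ outside a bounded region; hence for any sufficiently large $R$ one can find a path $q_i$ inside the annulus $N_{R-10\sigma}(v_0) \setminus N_{r+10\sigma}(v_0)$ joining a point of $\gamma_{i+1}$ to a point of $\gamma_{i+2}$ while avoiding $N_{10\sigma}(w_{i+1} \cup w_{i+2} \cup \gamma_i)$. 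Axioms~(CE5) and~(CE6) then hold by construction, and~(CE2) follows because $w_i(0)$ and $v_0$ both lie inside $N_{r + 10\sigma}(v_0)$, which is disjoint from $N_{10\sigma}(q_1 \cup q_2 \cup q_3)$. The most delicate step is the first: a na\"ive Arzel\`a--Ascoli limit could a priori yield a ray that fellow-travels with $\ell$ and so fails to give a genuine third direction. The uniform control on $\dist(v_0, z_n) - \dist(z_n, \ell)$ is what rules this out, and obtaining such a sequence is the main obstacle, likely requiring a nontrivial use of vertex-transitivity to find points of $B$ that are as close to $v_0$ as their depth in $B$ allows.
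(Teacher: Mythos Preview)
Your first two stages are broadly aligned with the paper's approach, though the paper handles stage~1 more cleanly: rather than hunting for $z_n$ with $\dist(v_0,z_n)-\dist(z_n,\ell)$ bounded, it simply takes shortest paths from far-away points to the line and then uses vertex-transitivity to translate the \emph{junction points} to a common vertex before passing to the limit. This automatically yields $\dist(\gamma_3(t),\ell)=t$ for all $t$, with no minimisation argument needed.

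The genuine gap is in stage~3. Your assertion that ``three of these rays alone never disconnect $X$ outside a bounded region'' is precisely the hard part, and you have not given an argument for it. You also invoke pairwise divergence of all six rays, but that is the content of the witness separation lemma, which is proved \emph{after} cross-examiners are known to exist and so cannot be used here (the paper explicitly flags that divergence of the $w_i$ from one another is not obvious at this stage). The paper's construction of the $q_i$ is more delicate than your sketch suggests: it takes a \emph{shortest} path $q$ from $\gamma_1$ to $\rho_1$ outside a large ball, and uses minimality together with the witness-pair property to show $q$ avoids $N_{10\sigma}(w_1\cup w_2\cup\gamma_3)$. Crucially, however, one \emph{cannot} control which of $\gamma_2$ or $\gamma_3$ this minimal path lands on. Repeating the construction for the other legs may therefore fail to produce all three required $q_i$. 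The paper resolves this with a ``cheap trick'': by Lemma~\ref{lem:witness-intersects-q}, the relevant witnesses must cross the $\sigma$-neighbourhoods of the paths already built, so after running the construction once more at a larger scale, even in the bad case one can read off a valid cross-examiner from the resulting configuration (Figures~\ref{fig:crosshair-construction-badcase} and~\ref{fig:crosshair-cheap-fix}). Without something like this trick, your stage~3 does not go through.
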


\begin{proof}
    We begin by constructing the $\gamma_i$. Firstly, it is easy to see that $X$ contains a bi-infinite geodesic line. This can be easily verified by taking arbitrarily long geodesic segments of even length, translating them to have a common midpoint, and taking the limit using the Arzel\`a--Ascoli theorem. 
    Now, given such a bi-infinite geodesic $\rho$, one can find points arbitrarily far away from it, and attach them to $\rho$ with shortest paths. 
    If we now translate all the `junction points' to a common vertex and pass to a subsequence using the Arzel\`a--Ascoli theorem again, we find another bi-infinite geodesic $\rho'$ and a one-way infinite geodesic $\gamma$ based on $\rho$ such that $\dist_X(\gamma(t), \rho') = t$ for every $t > 0$, if $\gamma$ is parametrised at unit speed. 
    Write $\gamma_1 := \gamma$ and let $\gamma_2$ and $\gamma_3$ denote the tails of $\rho'$ based at $\gamma(0)$. Write $\rho_i := \gamma_{i+1}^{-1} \cdot \gamma_{i+2}$.

    \begin{claim}
        Each $\rho_i$ is a $(4,0)$-quasi-geodesic.
    \end{claim}

    \begin{proof}
        Any initial segment of the third geodesic $\gamma_1$ we construct is easily seen to be a shortest path to the geodesic $\rho_1$. From this, it is an exercise in the triangle inequality that $\rho_2$ and $\rho_3$ are $(4,0)$-quasi-geodesics. 
    \end{proof}


    Now, let $w_1$, $w_2$, $w_3$ be any choice of paths such that $(w_i, \gamma_i)$ is a pair of witnesses for $\rho_i = \gamma_{i+1}^{-1} \cdot \gamma_{i+2}$. Assume without loss of generality that the $w_i$ are disjoint from $\sigma$-neighbourhood of all three of the $\gamma_j$ (this is clearly possible by, say, passing to a tail of $w_i$). Then, extend the initial segment of $w_i$ with a shortest path to $\rho_i$, so that $w_i(0) \in \rho_i$. 

    Since $w_i$ is a witness to $\rho_i$, there exists $r_i > 0$ such that $\dist_X(w_i(t) , \rho_i) > 10\sigma$ for all $t > r_i$. Note that this also implies that $\dist_X(w_i(t), \gamma_i) > 100\sigma$ for all $t > r_i$. Let
    $$
    r := 10\max \{ r_1, r_2, r_3 \} + 10\sigma.
    $$
    We now choose the path segments $q_i$. This will require some extra care and/or trickery.
    Let $B = N_{r+10\sigma}(v_0)$. 
    Let $\gamma_i' = \gamma_i\setminus B$ Note that the $\gamma_i'$ are all connected and pairwise far apart from each other (i.e. they are $10\sigma$-separated), and the $w_i$ are similarly far away from all of the $\gamma_i'$. 
    
    Since $X$ is one-ended by Proposition~\ref{prop:one-ended}, let $q$ be a path from $\gamma_{1}'$ to $\rho_1$ which avoids $B$, of minimal length. Let us assume without loss of generality that $q$ terminates on $\gamma_2'$. 

    \begin{claim}
        The path $q$ is disjoint from $N_{10\sigma}(w_{1} \cup w_2 \cup \gamma_3)$.
    \end{claim}

    \begin{proof}
         Firstly, note that $q$ cannot pass anywhere near $w_1$, as to do so it would need to pass too close to $\rho_1$, since $(\gamma_1, w_1)$ is a pair of witnesses for $\rho_1$, and this would contradict minimality.
        Next, by considering the inverse path $p_2$, we note also that, in this case, $q$ will not pass anywhere near $w_2$ either. Indeed, since $(w_2, \gamma_2)$ is a pair of witnesses for $\rho_2$, if $q$ were to travel near both $w_2$ and $\gamma_2$ it will first need to pass $\sigma$-close to $\rho_2$, i.e. near $\gamma_1$ or $\gamma_3$, thus contradicting minimality of $q$. 
        Finally, we obviously have that $q$ does not travel $10\sigma$-near $\gamma_3$, as if it did so before it reached $\gamma_2$, we would once again find a shorter path, since $\gamma_2'$ and $\gamma_3'$ are comparatively very far apart. 
    \end{proof}

    Repeat the same construction with $\gamma_3$, and construct a path $q'$ with similar properties. If instead the path $q$ actually terminated at $\gamma_3'$, then we repeat the construction instead with $\gamma_2$. 

    We would now like to repeat the construction once more and obtain our third path through the same method. However, this might not work as we do not have any control as to which `side' the constructed path connects to. Instead, we will need to employ a cheap trick.    
    This trick runs as follows. First, we note that (assuming without loss of generality that $q$ terminated at $\gamma_2$)  the witness $w_3$ intersects the $\sigma$-neighbourhood of $q_1$, by Lemma~\ref{lem:witness-intersects-q}.
    Similarly, we note that (assuming further without loss of generality that $q'$ also terminated at $\gamma_2$) that $w_1$ must intersect the $\sigma$-neighbourhood of $q'$.
    Now, let $r' > 10r$ be large enough that $N_{r'}(v_0)$ contains both $q$ and $q'$. Let $B'$ denote the ball of radius $100\sigma + r'$ around $v_0$. We play the same game again with $B'$ instead of $B$. Either we find our third missing path and we are done immediately, or we end up in the situation depicted in Figure~\ref{fig:crosshair-construction-badcase}. In this case, we are also done, as within this figure our desired construction is also hiding. This is highlighted in Figure~\ref{fig:crosshair-cheap-fix}. It is easy to verify that this is a cross-examiner.
\end{proof}

    \begin{figure}
\begin{minipage}{.5\textwidth}
  \centering

    \tikzset{every picture/.style={line width=0.75pt}} 

\begin{tikzpicture}[x=0.75pt,y=0.75pt,yscale=-1,xscale=1]

\draw [color={rgb, 255:red, 0; green, 0; blue, 0 }  ,draw opacity=1 ]   (94.45,50.39) -- (196.49,122.24) ;
\draw [shift={(196.49,122.24)}, rotate = 35.15] [color={rgb, 255:red, 0; green, 0; blue, 0 }  ,draw opacity=1 ][fill={rgb, 255:red, 0; green, 0; blue, 0 }  ,fill opacity=1 ][line width=0.75]      (0, 0) circle [x radius= 2.68, y radius= 2.68]   ;
\draw [shift={(92,48.67)}, rotate = 35.15] [fill={rgb, 255:red, 0; green, 0; blue, 0 }  ,fill opacity=1 ][line width=0.08]  [draw opacity=0] (7.14,-3.43) -- (0,0) -- (7.14,3.43) -- cycle    ;
\draw [color={rgb, 255:red, 0; green, 0; blue, 0 }  ,draw opacity=1 ]   (292.86,55.7) -- (196.49,122.24) ;
\draw [shift={(196.49,122.24)}, rotate = 145.38] [color={rgb, 255:red, 0; green, 0; blue, 0 }  ,draw opacity=1 ][fill={rgb, 255:red, 0; green, 0; blue, 0 }  ,fill opacity=1 ][line width=0.75]      (0, 0) circle [x radius= 2.68, y radius= 2.68]   ;
\draw [shift={(295.33,54)}, rotate = 145.38] [fill={rgb, 255:red, 0; green, 0; blue, 0 }  ,fill opacity=1 ][line width=0.08]  [draw opacity=0] (7.14,-3.43) -- (0,0) -- (7.14,3.43) -- cycle    ;
\draw [color={rgb, 255:red, 0; green, 0; blue, 0 }  ,draw opacity=1 ]   (189.56,214.01) -- (196.49,122.24) ;
\draw [shift={(196.49,122.24)}, rotate = 274.32] [color={rgb, 255:red, 0; green, 0; blue, 0 }  ,draw opacity=1 ][fill={rgb, 255:red, 0; green, 0; blue, 0 }  ,fill opacity=1 ][line width=0.75]      (0, 0) circle [x radius= 2.68, y radius= 2.68]   ;
\draw [shift={(189.33,217)}, rotate = 274.32] [fill={rgb, 255:red, 0; green, 0; blue, 0 }  ,fill opacity=1 ][line width=0.08]  [draw opacity=0] (7.14,-3.43) -- (0,0) -- (7.14,3.43) -- cycle    ;
\draw [color={rgb, 255:red, 74; green, 144; blue, 226 }  ,draw opacity=1 ][line width=1.5]    (171,104) .. controls (188.33,87.67) and (206,88) .. (220.67,104.67) ;
\draw [shift={(220.67,104.67)}, rotate = 48.65] [color={rgb, 255:red, 74; green, 144; blue, 226 }  ,draw opacity=1 ][fill={rgb, 255:red, 74; green, 144; blue, 226 }  ,fill opacity=1 ][line width=1.5]      (0, 0) circle [x radius= 1.74, y radius= 1.74]   ;
\draw [shift={(171,104)}, rotate = 316.7] [color={rgb, 255:red, 74; green, 144; blue, 226 }  ,draw opacity=1 ][fill={rgb, 255:red, 74; green, 144; blue, 226 }  ,fill opacity=1 ][line width=1.5]      (0, 0) circle [x radius= 1.74, y radius= 1.74]   ;
\draw [color={rgb, 255:red, 74; green, 144; blue, 226 }  ,draw opacity=1 ][line width=1.5]    (140.67,83) .. controls (173,56.33) and (215,68) .. (249.33,85) ;
\draw [shift={(249.33,85)}, rotate = 26.34] [color={rgb, 255:red, 74; green, 144; blue, 226 }  ,draw opacity=1 ][fill={rgb, 255:red, 74; green, 144; blue, 226 }  ,fill opacity=1 ][line width=1.5]      (0, 0) circle [x radius= 1.74, y radius= 1.74]   ;
\draw [shift={(140.67,83)}, rotate = 320.49] [color={rgb, 255:red, 74; green, 144; blue, 226 }  ,draw opacity=1 ][fill={rgb, 255:red, 74; green, 144; blue, 226 }  ,fill opacity=1 ][line width=1.5]      (0, 0) circle [x radius= 1.74, y radius= 1.74]   ;
\draw [color={rgb, 255:red, 74; green, 144; blue, 226 }  ,draw opacity=1 ][line width=1.5]    (230,101.33) .. controls (237.67,127) and (232.67,141) .. (192.82,145.94) ;
\draw [shift={(192.82,145.94)}, rotate = 172.94] [color={rgb, 255:red, 74; green, 144; blue, 226 }  ,draw opacity=1 ][fill={rgb, 255:red, 74; green, 144; blue, 226 }  ,fill opacity=1 ][line width=1.5]      (0, 0) circle [x radius= 1.74, y radius= 1.74]   ;
\draw [shift={(230,101.33)}, rotate = 73.37] [color={rgb, 255:red, 74; green, 144; blue, 226 }  ,draw opacity=1 ][fill={rgb, 255:red, 74; green, 144; blue, 226 }  ,fill opacity=1 ][line width=1.5]      (0, 0) circle [x radius= 1.74, y radius= 1.74]   ;
\draw [color={rgb, 255:red, 74; green, 144; blue, 226 }  ,draw opacity=1 ][line width=1.5]    (257.33,81.33) .. controls (278,119.33) and (254.67,170.33) .. (192.63,172.35) ;
\draw [shift={(192.63,172.35)}, rotate = 178.14] [color={rgb, 255:red, 74; green, 144; blue, 226 }  ,draw opacity=1 ][fill={rgb, 255:red, 74; green, 144; blue, 226 }  ,fill opacity=1 ][line width=1.5]      (0, 0) circle [x radius= 1.74, y radius= 1.74]   ;
\draw [shift={(257.33,81.33)}, rotate = 61.46] [color={rgb, 255:red, 74; green, 144; blue, 226 }  ,draw opacity=1 ][fill={rgb, 255:red, 74; green, 144; blue, 226 }  ,fill opacity=1 ][line width=1.5]      (0, 0) circle [x radius= 1.74, y radius= 1.74]   ;
\draw [color={rgb, 255:red, 155; green, 155; blue, 155 }  ,draw opacity=1 ]   (117.07,143.38) .. controls (136.83,130.63) and (138.81,166.4) .. (188.17,117.45) ;
\draw [shift={(188.17,117.45)}, rotate = 315.23] [color={rgb, 255:red, 155; green, 155; blue, 155 }  ,draw opacity=1 ][fill={rgb, 255:red, 155; green, 155; blue, 155 }  ,fill opacity=1 ][line width=0.75]      (0, 0) circle [x radius= 2.01, y radius= 2.01]   ;
\draw [shift={(114.5,145.24)}, rotate = 321.44] [fill={rgb, 255:red, 155; green, 155; blue, 155 }  ,fill opacity=1 ][line width=0.08]  [draw opacity=0] (5.36,-2.57) -- (0,0) -- (5.36,2.57) -- cycle    ;
\draw [color={rgb, 255:red, 155; green, 155; blue, 155 }  ,draw opacity=1 ]   (290,135.81) .. controls (236.33,112.34) and (237.76,138.82) .. (196.45,131.34) ;
\draw [shift={(196.45,131.34)}, rotate = 190.26] [color={rgb, 255:red, 155; green, 155; blue, 155 }  ,draw opacity=1 ][fill={rgb, 255:red, 155; green, 155; blue, 155 }  ,fill opacity=1 ][line width=0.75]      (0, 0) circle [x radius= 2.01, y radius= 2.01]   ;
\draw [shift={(293.35,137.31)}, rotate = 204.53] [fill={rgb, 255:red, 155; green, 155; blue, 155 }  ,fill opacity=1 ][line width=0.08]  [draw opacity=0] (5.36,-2.57) -- (0,0) -- (5.36,2.57) -- cycle    ;
\draw [color={rgb, 255:red, 155; green, 155; blue, 155 }  ,draw opacity=1 ]   (177.06,54.47) .. controls (196.45,68.73) and (190.25,95.42) .. (201.02,118.35) ;
\draw [shift={(201.02,118.35)}, rotate = 64.84] [color={rgb, 255:red, 155; green, 155; blue, 155 }  ,draw opacity=1 ][fill={rgb, 255:red, 155; green, 155; blue, 155 }  ,fill opacity=1 ][line width=0.75]      (0, 0) circle [x radius= 2.01, y radius= 2.01]   ;
\draw [shift={(174.5,52.75)}, rotate = 31.43] [fill={rgb, 255:red, 155; green, 155; blue, 155 }  ,fill opacity=1 ][line width=0.08]  [draw opacity=0] (5.36,-2.57) -- (0,0) -- (5.36,2.57) -- cycle    ;

\end{tikzpicture}

  \captionof{figure}{The bad case.}
  \label{fig:crosshair-construction-badcase}
\end{minipage}%
\begin{minipage}{.5\textwidth}
  \centering

\tikzset{every picture/.style={line width=0.75pt}} 

\begin{tikzpicture}[x=0.75pt,y=0.75pt,yscale=-1,xscale=1]

\draw [color={rgb, 255:red, 155; green, 155; blue, 155 }  ,draw opacity=1 ] [dash pattern={on 2.25pt off 1.5pt}]  (231.25,130) .. controls (216.75,126.75) and (207.25,133.75) .. (196.45,131.34) ;
\draw [color={rgb, 255:red, 155; green, 155; blue, 155 }  ,draw opacity=1 ] [dash pattern={on 2.25pt off 1.5pt}]  (194.25,92) .. controls (194,104.75) and (194,109) .. (200.77,118.35) ;
\draw [color={rgb, 255:red, 0; green, 0; blue, 0 }  ,draw opacity=1 ][line width=1.5]    (189.99,216.51) -- (194.25,145.75) -- (200.5,145.25) -- (211.75,142.25) -- (221.25,138.25) -- (231.5,130) -- (233,117.75) -- (231.5,106) -- (229.25,99.25) ;
\draw [shift={(229.25,99.25)}, rotate = 251.57] [color={rgb, 255:red, 0; green, 0; blue, 0 }  ,draw opacity=1 ][fill={rgb, 255:red, 0; green, 0; blue, 0 }  ,fill opacity=1 ][line width=1.5]      (0, 0) circle [x radius= 3.05, y radius= 3.05]   ;
\draw [shift={(189.75,220.5)}, rotate = 273.45] [fill={rgb, 255:red, 0; green, 0; blue, 0 }  ,fill opacity=1 ][line width=0.08]  [draw opacity=0] (8.13,-3.9) -- (0,0) -- (8.13,3.9) -- cycle    ;
\draw [color={rgb, 255:red, 0; green, 0; blue, 0 }  ,draw opacity=1 ][line width=1.5]    (92.27,48.31) -- (171,104) -- (179.5,97) -- (188.75,92.25) -- (198.25,91.75) -- (206.5,93.5) -- (212.5,96.75) -- (220.67,104.67) -- (229.25,99.25) ;
\draw [shift={(229.25,99.25)}, rotate = 327.75] [color={rgb, 255:red, 0; green, 0; blue, 0 }  ,draw opacity=1 ][fill={rgb, 255:red, 0; green, 0; blue, 0 }  ,fill opacity=1 ][line width=1.5]      (0, 0) circle [x radius= 3.05, y radius= 3.05]   ;
\draw [shift={(89,46)}, rotate = 35.27] [fill={rgb, 255:red, 0; green, 0; blue, 0 }  ,fill opacity=1 ][line width=0.08]  [draw opacity=0] (8.13,-3.9) -- (0,0) -- (8.13,3.9) -- cycle    ;
\draw [color={rgb, 255:red, 0; green, 0; blue, 0 }  ,draw opacity=1 ][line width=1.5]    (296.2,53.26) -- (229.25,99.25) ;
\draw [shift={(229.25,99.25)}, rotate = 145.52] [color={rgb, 255:red, 0; green, 0; blue, 0 }  ,draw opacity=1 ][fill={rgb, 255:red, 0; green, 0; blue, 0 }  ,fill opacity=1 ][line width=1.5]      (0, 0) circle [x radius= 3.05, y radius= 3.05]   ;
\draw [shift={(299.5,51)}, rotate = 145.52] [fill={rgb, 255:red, 0; green, 0; blue, 0 }  ,fill opacity=1 ][line width=0.08]  [draw opacity=0] (8.13,-3.9) -- (0,0) -- (8.13,3.9) -- cycle    ;
\draw [color={rgb, 255:red, 208; green, 2; blue, 27 }  ,draw opacity=1 ][line width=3]    (140.42,83) .. controls (172.75,56.33) and (214.75,68) .. (249.08,85) ;
\draw [shift={(249.08,85)}, rotate = 26.34] [color={rgb, 255:red, 208; green, 2; blue, 27 }  ,draw opacity=1 ][fill={rgb, 255:red, 208; green, 2; blue, 27 }  ,fill opacity=1 ][line width=3]      (0, 0) circle [x radius= 2.55, y radius= 2.55]   ;
\draw [shift={(140.42,83)}, rotate = 320.49] [color={rgb, 255:red, 208; green, 2; blue, 27 }  ,draw opacity=1 ][fill={rgb, 255:red, 208; green, 2; blue, 27 }  ,fill opacity=1 ][line width=3]      (0, 0) circle [x radius= 2.55, y radius= 2.55]   ;
\draw [color={rgb, 255:red, 208; green, 2; blue, 27 }  ,draw opacity=1 ][line width=3]    (257.08,81.33) .. controls (277.75,119.33) and (254.42,170.33) .. (192.38,172.35) ;
\draw [shift={(192.38,172.35)}, rotate = 178.14] [color={rgb, 255:red, 208; green, 2; blue, 27 }  ,draw opacity=1 ][fill={rgb, 255:red, 208; green, 2; blue, 27 }  ,fill opacity=1 ][line width=3]      (0, 0) circle [x radius= 2.55, y radius= 2.55]   ;
\draw [shift={(257.08,81.33)}, rotate = 61.46] [color={rgb, 255:red, 208; green, 2; blue, 27 }  ,draw opacity=1 ][fill={rgb, 255:red, 208; green, 2; blue, 27 }  ,fill opacity=1 ][line width=3]      (0, 0) circle [x radius= 2.55, y radius= 2.55]   ;
\draw [color={rgb, 255:red, 208; green, 2; blue, 27 }  ,draw opacity=1 ][line width=3]    (170.75,104) -- (196.24,122.24) -- (194,145.75) ;
\draw [shift={(194,145.75)}, rotate = 95.45] [color={rgb, 255:red, 208; green, 2; blue, 27 }  ,draw opacity=1 ][fill={rgb, 255:red, 208; green, 2; blue, 27 }  ,fill opacity=1 ][line width=3]      (0, 0) circle [x radius= 2.55, y radius= 2.55]   ;
\draw [shift={(170.75,104)}, rotate = 35.58] [color={rgb, 255:red, 208; green, 2; blue, 27 }  ,draw opacity=1 ][fill={rgb, 255:red, 208; green, 2; blue, 27 }  ,fill opacity=1 ][line width=3]      (0, 0) circle [x radius= 2.55, y radius= 2.55]   ;
\draw [color={rgb, 255:red, 65; green, 117; blue, 5 }  ,draw opacity=1 ][line width=1.5]    (194.25,92) -- (192.25,81.5) -- (190.5,76.25) -- (188.25,69) -- (183,59.75) -- (176.52,54.12) ;
\draw [shift={(173.5,51.5)}, rotate = 40.97] [fill={rgb, 255:red, 65; green, 117; blue, 5 }  ,fill opacity=1 ][line width=0.08]  [draw opacity=0] (6.97,-3.35) -- (0,0) -- (6.97,3.35) -- cycle    ;
\draw [shift={(194.25,92)}, rotate = 259.22] [color={rgb, 255:red, 65; green, 117; blue, 5 }  ,draw opacity=1 ][fill={rgb, 255:red, 65; green, 117; blue, 5 }  ,fill opacity=1 ][line width=1.5]      (0, 0) circle [x radius= 2.61, y radius= 2.61]   ;
\draw [color={rgb, 255:red, 65; green, 117; blue, 5 }  ,draw opacity=1 ][line width=1.5]    (231.25,130) -- (243,126.75) -- (251.75,125.75) -- (263,126.75) -- (276.5,130.5) -- (290.77,136.05) ;
\draw [shift={(294.5,137.5)}, rotate = 201.25] [fill={rgb, 255:red, 65; green, 117; blue, 5 }  ,fill opacity=1 ][line width=0.08]  [draw opacity=0] (6.97,-3.35) -- (0,0) -- (6.97,3.35) -- cycle    ;
\draw [shift={(231.25,130)}, rotate = 344.54] [color={rgb, 255:red, 65; green, 117; blue, 5 }  ,draw opacity=1 ][fill={rgb, 255:red, 65; green, 117; blue, 5 }  ,fill opacity=1 ][line width=1.5]      (0, 0) circle [x radius= 2.61, y radius= 2.61]   ;
\draw [color={rgb, 255:red, 65; green, 117; blue, 5 }  ,draw opacity=1 ][line width=1.5]    (220.42,104.67) -- (196.24,122.24) -- (188.5,116.75) -- (178,128) -- (167.5,135.25) -- (154.25,142.75) -- (142.75,144.75) -- (132.25,141.75) -- (125.75,140.25) -- (120.25,141.25) -- (116.71,143.87) ;
\draw [shift={(113.5,146.25)}, rotate = 323.47] [fill={rgb, 255:red, 65; green, 117; blue, 5 }  ,fill opacity=1 ][line width=0.08]  [draw opacity=0] (6.97,-3.35) -- (0,0) -- (6.97,3.35) -- cycle    ;
\draw [shift={(220.42,104.67)}, rotate = 143.99] [color={rgb, 255:red, 65; green, 117; blue, 5 }  ,draw opacity=1 ][fill={rgb, 255:red, 65; green, 117; blue, 5 }  ,fill opacity=1 ][line width=1.5]      (0, 0) circle [x radius= 2.61, y radius= 2.61]   ;

\end{tikzpicture}

  \captionof{figure}{A cheap trick.}
  \label{fig:crosshair-cheap-fix}
\end{minipage}
\end{figure}

\section{Witness separation}\label{sec:witness-sep}

We now prove a lemma related to the witnesses of a cross-examiner, which we have dubbed the \emph{witness separation lemma}. Morally, this lemma says that any two of the three witnesses held by a cross-examiner are kept apart from each other by appropriate quasi-geodesics.

\begin{lemma}[The witness separation lemma]\label{lem:witness-separation}
    Let $\crosshair = (\gamma_i, w_i, q_i)$ be a cross-examiner. Then $(w_i,w_{i+1})$ is a pair of witnesses for $\rho_{i} := \gamma_{i+1}^{-1} \cdot \gamma_{i+2}$. 
\end{lemma}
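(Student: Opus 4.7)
The plan is to exploit the cyclic symmetry of the indices $\{1,2,3\}$ in the cross-examiner axioms and treat only $i=1$, showing that (tails of) $w_1, w_2$ form a pair of witnesses for $\rho_1 = \gamma_2^{-1}\cdot\gamma_3$. Axiom (CE3) already provides that $(w_1, \gamma_1)$ is such a pair, so a tail of $w_1$ lies in some wide component $A$ of $X\setminus N_\sigma(\rho_1)$ and a tail of $\gamma_1$ in the other wide component $B$. Two things remain to check: that a tail of $w_2$ diverges from $\rho_1$, and that this tail lies in $B$ rather than $A$.

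For divergence, I would first observe that $w_2$ diverges from $\gamma_3\subset\rho_2$ automatically, since $(w_2, \gamma_2)$ is a pair of witnesses for $\rho_2 = \gamma_3^{-1}\cdot\gamma_1$. For divergence from $\gamma_2$, I argue by contradiction: suppose there exist sequences $t_n, s_n \to \infty$ with $\dist_X(w_2(t_n), \gamma_2(s_n))$ bounded by some $C$. Since $\rho_1$ and $\rho_3$ are quasi-geodesics by (CE1), $\gamma_2$ diverges from both $\gamma_1$ and $\gamma_3$, so $\dist(\gamma_2(s_n), \rho_2) \to \infty$; combined with (CE4), also $\dist(w_2(t_n), \rho_2) \to \infty$. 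For $n$ large enough both distances exceed $C + \sigma$, so a shortest path in $X$ from $w_2(t_n)$ to $\gamma_2(s_n)$ has length at most $C$ and, by the triangle inequality, stays outside $N_\sigma(\rho_2)$. This places $w_2$ and $\gamma_2$ in the same connected component of $X\setminus N_\sigma(\rho_2)$, contradicting (CE3).

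For the distinct-components claim, Lemma~\ref{lem:witness-implies-wide} places a tail of $w_2$ in some wide component $V$ of $X\setminus N_\sigma(\rho_1)$, and it suffices to rule out $V = A$. The plan is to invoke the witness protection lemma with a modification $\rho_1'$ of $\rho_1$ obtained by splicing in $q_1$, replacing the subpath of $\rho_1$ between the endpoints of $q_1$ on $\gamma_2$ and $\gamma_3$ with $q_1$ itself. By (CE1), (CE5), and (CE6), $\rho_1'$ is a bi-infinite quasi-geodesic at finite Hausdorff distance from $\rho_1$, and tails of $w_1$, $w_2$, and $\gamma_1$ all stay $10\sigma$-far from $\rho_1'$. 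The witness protection lemma then yields that tails of $(w_1, \gamma_1)$ still witness the two wide components of $\rho_1'$, and the correspondence of wide components under bounded perturbations reduces the problem to showing a tail of $w_2$ lies in the same wide component of $\rho_1'$ as $\gamma_1$.

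To finish, I would exhibit an explicit connecting path in $X\setminus N_\sigma(\rho_1')$ from a tail of $w_2$ to a tail of $\gamma_1$: using (CE2), traverse from $w_2(0)$ to $v_0$ along the central segment of $\rho_2$ that avoids $N_{10\sigma}(q_1\cup q_2\cup q_3)$, and then continue outward from $v_0$ along $\gamma_1$, using (CE5) to keep away from $q_1$ and (CE1) to diverge from $\gamma_2$ and $\gamma_3$. The main obstacle I anticipate is the careful geometric verification that this concatenated path stays outside $N_\sigma(\rho_1')$ throughout the splicing region, where the $\sigma$-neighbourhoods of $\gamma_2|_{[a,\infty)}$, $\gamma_3|_{[b,\infty)}$, and $q_1$ could interact unpleasantly; making this work cleanly will likely require revisiting the choice of splicing parameters and using (CE6) to control the pertinent distances.
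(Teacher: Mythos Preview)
Your divergence argument is clean and correct (the reference to (CE4) should really be to the witness condition in (CE3), since (CE4) only gives the fixed bound $10\sigma$, but this is cosmetic).

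Your approach to the distinct-components step is genuinely different from the paper's, and the obstacle you flag at the end is real rather than a matter of bookkeeping. The concatenated path you describe---back along $w_2$ to $w_2(0)\in\rho_2$, along the central segment of $\rho_2$ to $v_0$, then out along $\gamma_1$---can fail to avoid $N_\sigma(\rho_1')$: the central segment of $\rho_2$ may include an arc of $\gamma_3$ running right up to the splice point, and with arbitrary quasi-geodesic constants $(\lambda,c)$ there is no uniform lower bound forcing $\gamma_1$, $\gamma_3|_{[0,b)}$, or the early part of $w_2$ to stay $\sigma$-far from $\gamma_2|_{[a,\infty)}$ or $\gamma_3|_{[b,\infty)}$. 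Adjusting the splice location does not obviously help, since the difficulty is intrinsic to threading any single path through the central region where all six rays cluster.

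The paper sidesteps this by arguing by contradiction and using Lemma~\ref{lem:witness-intersects-q} rather than witness protection. If tails of $w_1,w_2$ share a component of $X\setminus N_\sigma(\rho_1)$, pick a path $p$ between them far from $v_0$; since $(w_1,\gamma_1)$ witnesses $\rho_1$, this $p$ also avoids $N_\sigma(\gamma_1)$, and after possibly shortening one may assume it avoids $N_\sigma(w_3)$. The key observation is that Lemma~\ref{lem:witness-intersects-q}, applied to $\rho_1$ and to $\rho_2$, forces $w_1$ to meet $N_\sigma(q_1)$ and $w_2$ to meet $N_\sigma(q_2)$. One then assembles a path from $\gamma_1$ to $\gamma_2$ out of pieces of $q_2$, $w_2$, $p$, $w_1$, and $q_1$; by (CE5) and the properties of $p$ this path stays away from both $w_3$ and $\gamma_3$, contradicting Lemma~\ref{lem:witness-intersects-q} for $\rho_3$ with witnesses $(w_3,\gamma_3)$. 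This argument never routes anything through $v_0$, so your obstacle simply does not arise.
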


\begin{proof}
    We assume the lemma fails and aim to find a contradiction to Lemma~\ref{lem:witness-intersects-q}.
    Suppose, without loss of generality, that $(w_1,w_2)$ is not a pair of witnesses for $\rho_1 = \gamma_2^{-1}\cdot \gamma_3$. Then there exists a path $p$, avoiding the $r$-neighbourhood of $v_0$ and the $\sigma$-neighbourhood of $\rho_1$, between the infinite components of $w_1 \setminus N_r(v_0)$ and $w_2 \setminus N_r(v_0)$.
    Since $(w_1,\gamma_1)$ is a pair of witnesses for $\rho_1$, the path $p$ cannot intersect the $\sigma$-neighbourhood of $\gamma_1$ either. 
    We also assume without loss of generality that $p$ does not pass $\sigma$-close to $w_3$. If it did, we shorten $p$ and consider the pair $(w_1,w_3)$ instead.

    Consider the path  $q$ from $\gamma_1$ to $\gamma_2$ depicted  in Figure~\ref{fig:witness-separation}. This path is well-defined as by Lemma~\ref{lem:witness-intersects-q}, we see that each $w_i$ intersects the $\sigma$-neighbourhood of $q_i$, while at distance at least $10\sigma$ from $\rho_i$ by \ref{ce4}. In particular, $q$ is far away from both $w_3$ and $\gamma_3$. Relating this to the quasi-geodesic $\rho_3$ and its pair of witnesses $(\gamma_3, w_3)$, this path satisfies the hypotheses of Lemma~\ref{lem:witness-intersects-q}. However, it remains far away from both witnesses, a contradiction. 
    \begin{figure}[ht]
        \centering
        \input{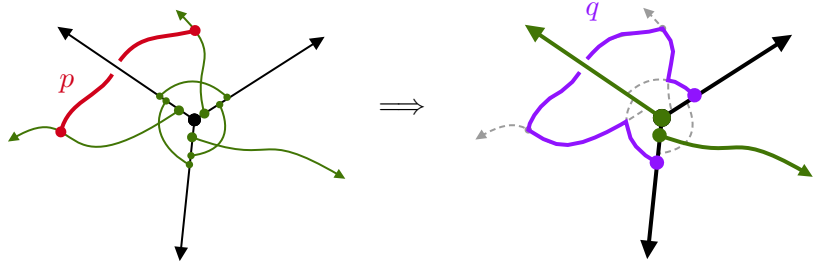}
        \caption{Proof of the witness separation lemma~(\ref{lem:witness-separation}).}
        \label{fig:witness-separation}
    \end{figure}
\end{proof}

We now apply the witness separation lemma~(\ref{lem:witness-separation}) and prove the following technical result. The purpose of the upcoming lemma is to aid in the application of Lemma~\ref{lem:lies-in-bounded-component}. 

\begin{lemma}\label{lem:good-subpath-weak}
    Let $\crosshair = (v_0,\gamma_i, w_i, q_i)$ be an $(r,R,\lambda, c)$-cross-examiner in $X$. Let $p \subset X$ be a path such that:
    \begin{enumerate}
        \item $p$ avoids $N_R(v_0)$,

        \item $p$ starts at a point on $\gamma_1$, finishes on $\gamma_2$, and intersects $\gamma_3$ on the way. 
    \end{enumerate}
    Then, there exists a permutation $\pi$ of the set $\{1,2,3\}$ and a subpath $p' \subset p$ such that:
    \begin{enumerate}
        \item $p'$ starts in $N_\sigma(\gamma_{\pi(1)})$, finishes on $N_\sigma(\gamma_{\pi(2)})$, and intersects $N_\sigma(\gamma_{\pi(3)})$.

        \item $p'$ avoids the $2\sigma$-neighbourhood of $w_{\pi(3)}$. 
    \end{enumerate}
\end{lemma}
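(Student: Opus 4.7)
The plan is to analyse the sequence of $\sigma$-neighbourhood visits of $p$ to the rays $\gamma_1,\gamma_2,\gamma_3$ and then exploit the witness separation lemma~(\ref{lem:witness-separation}). Parametrising $p\colon[0,T]\to X$, I would consider the ordered sequence of maximal subintervals of $[0,T]$ on each of which $p$ lies in some $N_\sigma(\gamma_i)$, labelling each subinterval by the corresponding index $i$. Collapsing consecutive equal labels yields a finite sequence $(\ell_1,\ldots,\ell_k)\in\{1,2,3\}^k$ with $\ell_{m+1}\neq\ell_m$. The hypotheses on $p$ guarantee $\ell_1=1$, $\ell_k=2$, and $\ell_m=3$ for some $m$, so all three labels appear.

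A pigeonhole argument then produces an index $j$ with $\{\ell_{j-1},\ell_j,\ell_{j+1}\}=\{1,2,3\}$: if no such triple existed, then forced by distinctness of consecutive entries we would have $\ell_{r-1}=\ell_{r+1}$ for every $r$, whence the sequence would alternate between just two labels, contradicting that all three appear. I would define $\pi$ by $\pi(1)=\ell_{j-1}$, $\pi(3)=\ell_j$, $\pi(2)=\ell_{j+1}$, and take $p'$ to be the subpath of $p$ from a point in $N_\sigma(\gamma_{\pi(1)})$ (chosen at the end of the $\ell_{j-1}$-block) to a point in $N_\sigma(\gamma_{\pi(2)})$ (chosen at the start of the $\ell_{j+1}$-block). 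By construction, $p'$ passes through $N_\sigma(\gamma_{\pi(3)})$ during the interior $\ell_j$-block, verifying the first conclusion.

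For the second conclusion I would argue by contradiction: suppose $p'$ meets $N_{2\sigma}(w_{\pi(3)})$ at some interior time $a$. By Lemma~\ref{lem:witness-separation}, the pair $(w_{\pi(3)},\gamma_{\pi(3)})$ witnesses the bi-infinite quasi-geodesic $\rho_{\pi(3)}=\gamma_{\pi(1)}^{-1}\cdot\gamma_{\pi(2)}$, hence $w_{\pi(3)}$ and $\gamma_{\pi(3)}$ lie in distinct wide components of $X\setminus N_\sigma(\rho_{\pi(3)})$. Using axiom (CE4) together with the hypothesis $p\cap N_R(v_0)=\emptyset$, one verifies that $p'(a)$ lies in the $w_{\pi(3)}$-component while the central $\gamma_{\pi(3)}$-visit of $p'$ lies in the $\gamma_{\pi(3)}$-component. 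Consequently, on its way between these two times, $p'$ must cross $N_\sigma(\rho_{\pi(3)})=N_\sigma(\gamma_{\pi(1)})\cup N_\sigma(\gamma_{\pi(2)})$, producing an interior label visit of type $\pi(1)$ or $\pi(2)$. This interrupts the block structure and contradicts the fact that $(\ell_{j-1},\ell_j,\ell_{j+1})$ were chosen to be consecutive in the collapsed label sequence.

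The main technical delicacy lies in justifying the ``strictly within the correct component'' claim: for parameter values $t\leq r$ along $w_{\pi(3)}$, axiom~(CE4) allows $w_{\pi(3)}(t)$ to be only slightly more than $\sigma$ from $\rho_{\pi(3)}$, so in principle $N_{2\sigma}(w_{\pi(3)})$ could reach back across $N_\sigma(\rho_{\pi(3)})$ into the $\gamma_{\pi(3)}$-side. The constraint $p\cap N_R(v_0)=\emptyset$, combined with the fact that the initial segment of $w_{\pi(3)}$ lies inside (roughly) $N_R(v_0)$ by (CE2), keeps $p$ safely away from this problematic portion of $w_{\pi(3)}$, so the sectorial separation argument applies cleanly.
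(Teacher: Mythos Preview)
Your argument is correct but follows a genuinely different route from the paper. The paper tracks visits of $p$ to the $2\sigma$-neighbourhoods of \emph{all six} rays $\gamma_i$ and $w_i$, encodes these as a walk in an abstract six-cycle graph $\Delta$, and then proves a purely combinatorial claim about such walks to extract the desired subpath. Your approach instead tracks only the three $\gamma_i$-visits, uses the elementary pigeonhole observation that the collapsed label word must contain three consecutive distinct letters, and then argues the $w_{\pi(3)}$-avoidance \emph{a posteriori} by a separation argument. Your method is arguably more direct for this statement; the paper's $\Delta$-encoding is more systematic and makes the structure of adjacencies among all six rays explicit, which clarifies why no further case analysis is needed once the claim in $\Delta$ is established.

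Two small points. First, the fact you invoke---that $(w_{\pi(3)},\gamma_{\pi(3)})$ is a pair of witnesses for $\rho_{\pi(3)}$---is axiom (CE3), not the witness separation lemma; the latter concerns the pairs $(w_i,w_{i+1})$ and is not needed here. Second, your final paragraph correctly identifies the only real delicacy: ensuring that any point of $p'$ in $N_{2\sigma}(w_{\pi(3)})$ is genuinely on the $w_{\pi(3)}$-side of $N_\sigma(\rho_{\pi(3)})$, and likewise for the $\gamma_{\pi(3)}$-visit. This is handled by (CE4) for parameter values $t>r$ together with $p\cap N_R(v_0)=\emptyset$; the paper relies on the same kind of fact, asserting that the six rays are pairwise $10\sigma$-separated outside $N_R(v_0)$, so your level of justification matches the paper's.
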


\begin{proof}
    We form a finite graph $\Delta$ with six vertices. The vertices correspond to the rays $\gamma_i$, $w_i$, for $i = 1,2,3$. We colour the $\gamma_i$-vertices in red, and the $w_i$-vertices in green. Connect two vertices together with an edge if there exists a path between the corresponding rays in $X$ which avoids the $2\sigma$-neighbourhood of the other 4 rays.

    We now describe the structure of the graph $\Delta$. First, note that $w_i$ cannot share an edge with $\gamma_i$, as $(w_i,\gamma_i)$ is a pair of witnesses for the ray $\rho_i = \gamma_{i}^{-1} \cdot \gamma_{i+1}$. 
    Moreover, by the witness separation lemma~(\ref{lem:witness-separation}), any path from $w_i$ to $w_j$ for $i \neq j$ must intersect the $\sigma$-neighbourhood of either $\gamma_{i+1}$ or $\gamma_{i+2}$. In particular, in $\Delta$ the green vertex labelled by $w_i$ is can only share an edge with the two red vertices labelled by $\gamma_{i+1}$ and $\gamma_{i+2}$.
    Moreover, by Lemma~\ref{lem:witness-intersects-q}, we cannot have that two red vertices are adjacent, say $\gamma_1$ and $\gamma_2$, as such a path would need to intersect either $w_3$ or $\gamma_3$. 
    Let us assume without loss of generality that each green vertex labelled by $w_i$ is connected to both of the two red vertices labelled by $\gamma_{i+1}$ and $\gamma_{i+2}$. This is the most general case possible, and the possibility that some edges are missing does not affect the remainder of this argument. The graph $\Delta$, under these assumptions, is a cycle  of length 6, alternating red and green vertices. This is depicted in Figure~\ref{fig:graph-delta}. 

    \begin{figure}[ht]
        \centering
        \tikzset{every picture/.style={line width=0.75pt}} 

\begin{tikzpicture}[x=0.75pt,y=0.75pt,yscale=-1,xscale=1]

\draw   (402.57,135.22) -- (372.65,187.03) -- (312.83,187.03) -- (282.92,135.22) -- (312.83,83.41) -- (372.65,83.41) -- cycle ;
\draw  [color={rgb, 255:red, 208; green, 2; blue, 27 }  ,draw opacity=1 ][fill={rgb, 255:red, 255; green, 205; blue, 210 }  ,fill opacity=1 ] (271.7,135.22) .. controls (271.7,129.03) and (276.72,124.01) .. (282.92,124.01) .. controls (289.11,124.01) and (294.13,129.03) .. (294.13,135.22) .. controls (294.13,141.42) and (289.11,146.44) .. (282.92,146.44) .. controls (276.72,146.44) and (271.7,141.42) .. (271.7,135.22) -- cycle ;
\draw  [color={rgb, 255:red, 208; green, 2; blue, 27 }  ,draw opacity=1 ][fill={rgb, 255:red, 255; green, 205; blue, 210 }  ,fill opacity=1 ] (361.44,187.03) .. controls (361.44,180.84) and (366.46,175.82) .. (372.65,175.82) .. controls (378.85,175.82) and (383.87,180.84) .. (383.87,187.03) .. controls (383.87,193.23) and (378.85,198.25) .. (372.65,198.25) .. controls (366.46,198.25) and (361.44,193.23) .. (361.44,187.03) -- cycle ;
\draw  [color={rgb, 255:red, 208; green, 2; blue, 27 }  ,draw opacity=1 ][fill={rgb, 255:red, 255; green, 205; blue, 210 }  ,fill opacity=1 ] (361.44,83.41) .. controls (361.44,77.22) and (366.46,72.2) .. (372.65,72.2) .. controls (378.85,72.2) and (383.87,77.22) .. (383.87,83.41) .. controls (383.87,89.61) and (378.85,94.63) .. (372.65,94.63) .. controls (366.46,94.63) and (361.44,89.61) .. (361.44,83.41) -- cycle ;
\draw  [color={rgb, 255:red, 65; green, 117; blue, 5 }  ,draw opacity=1 ][fill={rgb, 255:red, 227; green, 255; blue, 200 }  ,fill opacity=1 ] (301.61,83.41) .. controls (301.61,77.22) and (306.63,72.2) .. (312.83,72.2) .. controls (319.02,72.2) and (324.05,77.22) .. (324.05,83.41) .. controls (324.05,89.61) and (319.02,94.63) .. (312.83,94.63) .. controls (306.63,94.63) and (301.61,89.61) .. (301.61,83.41) -- cycle ;
\draw  [color={rgb, 255:red, 65; green, 117; blue, 5 }  ,draw opacity=1 ][fill={rgb, 255:red, 227; green, 255; blue, 200 }  ,fill opacity=1 ] (391.35,135.22) .. controls (391.35,129.03) and (396.37,124.01) .. (402.57,124.01) .. controls (408.76,124.01) and (413.78,129.03) .. (413.78,135.22) .. controls (413.78,141.42) and (408.76,146.44) .. (402.57,146.44) .. controls (396.37,146.44) and (391.35,141.42) .. (391.35,135.22) -- cycle ;
\draw  [color={rgb, 255:red, 65; green, 117; blue, 5 }  ,draw opacity=1 ][fill={rgb, 255:red, 227; green, 255; blue, 200 }  ,fill opacity=1 ] (301.61,187.03) .. controls (301.61,180.84) and (306.63,175.82) .. (312.83,175.82) .. controls (319.02,175.82) and (324.05,180.84) .. (324.05,187.03) .. controls (324.05,193.23) and (319.02,198.25) .. (312.83,198.25) .. controls (306.63,198.25) and (301.61,193.23) .. (301.61,187.03) -- cycle ;

\draw (273.92,130.16) node [anchor=north west][inner sep=0.75pt]  [color={rgb, 255:red, 208; green, 2; blue, 27 }  ,opacity=1 ]  {$\gamma _{1}$};
\draw (363.65,181.68) node [anchor=north west][inner sep=0.75pt]  [color={rgb, 255:red, 208; green, 2; blue, 27 }  ,opacity=1 ]  {$\gamma _{2}$};
\draw (363.99,78.32) node [anchor=north west][inner sep=0.75pt]  [color={rgb, 255:red, 208; green, 2; blue, 27 }  ,opacity=1 ]  {$\gamma _{3}$};
\draw (392.65,130.16) node [anchor=north west][inner sep=0.75pt]  [color={rgb, 255:red, 65; green, 117; blue, 5 }  ,opacity=1 ]  {$w_{1}$};
\draw (302.25,182.01) node [anchor=north west][inner sep=0.75pt]  [color={rgb, 255:red, 65; green, 117; blue, 5 }  ,opacity=1 ]  {$w_{3}$};
\draw (302.58,77.65) node [anchor=north west][inner sep=0.75pt]  [color={rgb, 255:red, 65; green, 117; blue, 5 }  ,opacity=1 ]  {$w_{3}$};

\end{tikzpicture}
        \caption{The graph $\Delta$ constructed in the proof of Lemma~\ref{lem:good-subpath-weak}.}
        \label{fig:graph-delta}
    \end{figure}
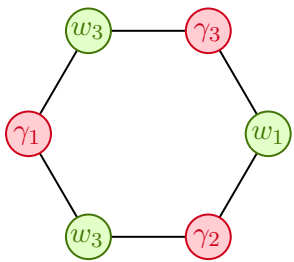

    Given the path $p$ as in the statement of the lemma, note that $p$ can only be contained in the $2\sigma$-neighbourhood of at most one of the six rays at a time, as they are pairwise $10\sigma$-separated outside of $N_R(v_0)$. 
    Then, form \emph{induced $\Delta$-path} of $p$, denoted $\overline p$, in the graph $\Delta$ as follows:
    \begin{enumerate}
        \item The path $\overline p$ starts at the red vertex labelled by $\gamma_1$.

        \item As we travel the length of $p$, we keep track of the last of the $\{\gamma_i, w_i\}$ which we intersected the $2\sigma$-neighbourhood of. 
            Whenever this updates and we intersect the $2\sigma$-neighbourhood of another ray, we append the vertex labelled by this ray on to the end of $\overline p$.
    \end{enumerate}
    By the definition of $\Delta$, this forms a well-defined path through $\Delta$ which starts at the vertex labelled by $\gamma_1$, finishes at the vertex labelled by $\gamma_2$, and contains the vertex labelled by $\gamma_3$. 
    The proof of the lemma now reduces to the following claim. In this claim, we identify vertices of $\Delta$ with their labels to ease notation.

    \begin{claim}\label{claim:subpath-delta-weak}
        Let $q$ be a path in $\Delta$ which starts at $\gamma_1$, ends at $\gamma_2$, and contains $\gamma_3$. Then there exists a permutation $\pi$ of $\{1,2,3\}$ and a subpath $q'$, such that $q'$ starts at $\gamma_{\pi(1)}$, ends at $\gamma_{\pi(2)}$, and  does not contain $w_{\pi(3)}$. 
    \end{claim}

    \begin{proof}
        It is sufficient to prove that there exists a subpath of $q$ which contains every red vertex but only two green vertices. 
        Indeed, suppose that, say, $q$ does not contain $w_1$. Since $q$ contains both $\gamma_2$ and $\gamma_3$, we can consider a subpath which starts and ends at these vertices. Such a subpath must contain $\gamma_2$, and we are done.

        Suppose first that $q$ crosses every edge in $\Delta$. Then, there exists a prefix of $q$ which contains every red vertex but does not cross every edge. 
        Thus, we may assume without loss of generality that $q$ does not cross every edge. 
        In this case, $q$ is contained in a subgraph of $\Delta$ which is a path graph, where one end vertex is red and the other is green. Assume without loss of generality that the red vertex on the end is $\gamma_1$, and the green vertex on the other end is $w_2$. If $q$ does not contain $w_2$ we are done. Otherwise, we consider a subpath of $q$ of minimal length from $\gamma_1$ to $w_2$. Note that $w_2$ appears exactly once in this subpath, as the terminal vertex. But then, to reach $w_2$ we must have that this subpath hits every other red vertex. Thus, $q$ contains a subpath $q'$ which contains all red vertices but does not contain the green vertex $w_2$. 
    \end{proof}

    The lemma now follows immediately by applying Claim~\ref{claim:subpath-delta-weak} to the induced $\Delta$-path $\overline p$ of $p$, and lifting the given subpath of $\overline p$ to a subpath $p'$ of $p$ with the desired properties. 
\end{proof}

\section{Jurisdiction}\label{sec:jurisdiction}

We introduce some more terminology. 

\begin{definition}[Depth]
    Let $A \subset X$ be a connected subgraph. We denote by $\partial A$ the set of vertices at distance exactly 1 from $A$. The \emph{depth} of $A$, denoted $\depth(A)$, is defined as 
    $$
    \depth(A) := \inf \{r \geq 0 : A \subset N_r(\partial A)\}.
    $$
\end{definition}

\begin{definition}[Jurisdiction]
    Given $\delta \geq 0$, let $S$ be a finite subgraph of $X$. Define the \emph{$\delta$-jurisdiction}  of $S$, denoted $\operatorname{Jur}_\delta(S)$, as 
    $$
    \operatorname{Jur}_\delta(S) := \max_A \ \depth(A),
    $$
    where $A$ ranges over all finite connected components of $X \setminus N_\delta(S)$. 
    
    Let $\Lambda$ be a collection of finite subgraphs of $X$.
    We say that $\Lambda$ has \emph{limited jurisdiction} if for all $\delta \geq 0$ there exists $C \geq 0$ so that for all $S \in \Lambda$ we have $\operatorname{Jur}_\delta(S) < C$. 
\end{definition}

We will show that, in a vertex-transitive graph with \property{}, not of quadratic growth, certain nice collections of subgraphs must have limited jurisdiction.
To prove this, we will use an isoperimetric inequality of Varopoulos to bound the growth of our graph. More precisely, we make use of a certain extension of this inequality which applies to all connected, locally finite, vertex-transitive graphs, due to Saloff-Coste. 

We need to set up some notation. 
Given a connected, locally finite, vertex-transitive graph $X$, let $\growth_X(n)$ denote the size of the closed $n$-ball around any given vertex. In other words, $\growth_X : \N \to \N$ is the \textit{growth function} of $X$. 
We say that $X$ has \emph{quadratic growth} if $\growth_X(n)$ grows quadratically.
Given a subset $A \subset V(X)$, we denote by $\partial A$ the set of vertices which lie at a distance of exactly 1 from $A$. 

\begin{theorem}[{\cite[Thm.~2.1]{saloff1995isoperimetric}}]
	Every infinite, connected, locally finite, vertex-transitive graph $X$ satisfies the following isoperimetric inequality.
	For all finite  $A \subset V(X)$, we have that 
	$$
	\frac{|A|}{\phi(2|A|)} \leq 4|\partial A|,
	$$
	where $\phi(\lambda) = \inf\{n \in \N : \growth_X(n) > \lambda\}$.\footnote{This function $\phi$ is sometimes referred to as the \emph{inverse growth function} in the literature.} 
\end{theorem}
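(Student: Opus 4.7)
The plan is to use a Varopoulos-type shifting argument adapted to the vertex-transitive setting, combining averaging over the automorphism group with a telescoping decomposition of symmetric differences. Given finite $A \subset V(X)$ and $n := \phi(2|A|)$, the idea is to find an automorphism $\alpha \in \Aut(X)$ that displaces the basepoint by at most $n$ yet moves $A$ mostly off itself, so that $|A \triangle \alpha(A)| > |A|$; one then bounds this symmetric difference in terms of $|\partial A|$ via a one-step-at-a-time analysis along a geodesic.

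First, I would fix a basepoint $o \in V(X)$ and, using vertex-transitivity, choose a single automorphism $\alpha_v$ with $\alpha_v(o)=v$ for each $v \in B(o,n)$. Swapping the order of summation,
\[
\sum_{v \in B(o,n)} |A \cap \alpha_v(A)| \;=\; \sum_{(x,y) \in A \times A} \#\{v : \alpha_v(x)=y\} \;\leq\; |A|^2,
\]
since under our fixed choice of section the equations $\alpha_v(o)=v$ and $\alpha_v(x)=y$ together determine $v$ uniquely. Because $|B(o,n)| > 2|A|$, averaging yields some $v^\star \in B(o,n)$ with $|A \cap \alpha_{v^\star}(A)| < |A|/2$, and consequently $|A \triangle \alpha_{v^\star}(A)| > |A|$.

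Next I would bound $|A \triangle \alpha_{v^\star}(A)|$ linearly in $n$ and $|\partial A|$ by telescoping. Pick a geodesic $o = u_0, u_1, \dotsc, u_n = v^\star$ and intermediate automorphisms $\beta_i$ with $\beta_i(o)=u_i$, $\beta_0 = \operatorname{id}$, $\beta_n = \alpha_{v^\star}$, chosen so that each $\beta_{i+1}\beta_i^{-1}$ sends $u_i$ to the adjacent vertex $u_{i+1}$. An elementary count shows
\[
|\beta_i(A) \triangle \beta_{i+1}(A)| \;\leq\; 4\,|\partial A|
\]
by tracking which vertices are inserted or deleted under a single ``edge-shift'' and relating these to the outer vertex boundary $\partial A$. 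The triangle inequality for symmetric differences then gives $|A \triangle \alpha_{v^\star}(A)| \leq 4n\,|\partial A|$; combined with the lower bound from the averaging step this yields $|A| \leq 4\,\phi(2|A|)\,|\partial A|$, which is the desired inequality.

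The main obstacle lies in the telescoping step. For a Cayley graph it is transparent because $\beta_{i+1}\beta_i^{-1}$ is literally left multiplication by a generator and $|sB \triangle B|$ is controlled by edge-boundary contributions in a single direction. In a general vertex-transitive graph, however, there is no canonical way to realise these elementary shifts as local set-level operations on $\beta_i(A)$, and $\beta_{i+1}\beta_i^{-1}$ can act globally in complicated ways. Saloff-Coste's innovation is to choose the sequence $\beta_i$ carefully---using, for instance, a Schreier-type section of $\Aut(X) \to V(X)$ compatible with the chosen geodesic---so that each elementary shift is nevertheless estimated by $O(|\partial A|)$. Tracking these constants carefully to land on the factor of $4$ is where the most delicate bookkeeping is required.
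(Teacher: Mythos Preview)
The paper does not prove this statement: it is quoted verbatim from Saloff-Coste \cite[Thm.~2.1]{saloff1995isoperimetric} and used as a black box, so there is no ``paper's own proof'' to compare against. Your task here was only to cite it, not to reprove it.

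That said, your sketch has two issues worth flagging. First, the averaging step is not correct as written: the assertion that ``$\alpha_v(o)=v$ and $\alpha_v(x)=y$ together determine $v$ uniquely'' is false in general. Once a section $v \mapsto \alpha_v$ is fixed, the map $v \mapsto \alpha_v(x)$ need not be injective for arbitrary $x$, since $\alpha_{v_2}^{-1}\alpha_{v_1}$ can fix $x$ without fixing $o$. The standard fix is to work not with a section but with Haar measure on the locally compact group $\Aut(X)$, pulling $A$ back to $\Aut(X)$ via the orbit map and performing the averaging there; this is what Coulhon--Saloff-Coste actually do. Second, as you yourself note, the telescoping step is the crux, and your sketch does not resolve it: you describe what one would like to do rather than how to do it. In the unimodular case one can push the Cayley-graph argument through on $\Aut(X)$; the non-unimodular case requires a separate (and easier) argument using exponential growth. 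In short, the shape of the argument is right, but neither step is complete as stated.
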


This inequality has the consequence that if there exists $C >0$ and $D \in \N$ such that $\growth(n) \geq Cn^D$ for all $n \geq 1$, then there exists $C'> 0$ such that 
$$
|A|^{\frac{D}{D-1}} \leq C' |\partial A|.
$$
for all finite  $A \subset V(X)$. 
If we consider the contrapositive of this statement, it says that if there exists `large' subsets with suitably `small' boundaries, then this, in turn, presents us with a bound on the growth of $X$. More precisely, we have the following statement.

\begin{corollary}\label{cor:varopoulos}
    Suppose that there exists $K > 0$ and a sequence of finite subsets $A_n \subset X$ such that
    \begin{enumerate}
        \item $|\partial A_n| \to \infty$ as $n \to \infty$, and
        \item $ \depth(A_n) > \frac 1 K|\partial A_n| - K$.
    \end{enumerate}
    Then $X$ has quadratic growth. 
\end{corollary}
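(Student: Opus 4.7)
The plan is to apply the Saloff-Coste consequence stated just above (with $D = 3$) to the sequence $A_n$ in order to rule out any super-quadratic growth of $X$. Set $d_n = \depth(A_n)$ and $b_n = |\partial A_n|$. The first observation I need is that $|A_n| \geq \growth_X(d_n - 1)$: by definition of depth there is a vertex $v \in A_n$ with $\dist(v, \partial A_n) = d_n$, and every vertex within distance $d_n - 1$ of $v$ must lie in $A_n$, since any shortest path leaving $A_n$ must first pass through $\partial A_n$. Combining the hypothesis $d_n > b_n/K - K$ with $b_n \to \infty$, I obtain $d_n - 1 \geq b_n/(2K)$ for all sufficiently large $n$.

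Now suppose for contradiction that $X$ does not have quadratic growth. By the standard growth dichotomy for connected, locally finite, vertex-transitive graphs (Gromov--Trofimov), this leaves two possibilities: either $X$ has polynomial growth of integer degree $D \geq 3$, or $X$ has super-polynomial growth. In both cases I can pick $C > 0$ with $\growth_X(n) \geq Cn^3$ for every $n \geq 1$; in the super-polynomial case this follows from an eventual cubic lower bound by shrinking the constant to cover the finitely many small values of $n$. The Saloff-Coste consequence with $D = 3$ then yields $C' > 0$ such that $|A|^{3/2} \leq C'|\partial A|$ for every finite $A \subset V(X)$.

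Chaining these estimates, for all large $n$ I get $|A_n|^{3/2} \geq C^{3/2}(d_n - 1)^{9/2} \geq c'' b_n^{9/2}$ for some $c''> 0$, while simultaneously $|A_n|^{3/2} \leq C'b_n$. Hence $b_n^{7/2}$ is bounded in $n$, contradicting $b_n \to \infty$. Therefore $X$ has polynomial growth of degree at most $2$; since $X$ is infinite and one-ended by Proposition~\ref{prop:one-ended} (ruling out degrees $0$ and $1$, as linear growth would make $X$ two-ended), I conclude that $X$ has quadratic growth. The proof is essentially a direct computation once the ingredients are assembled; the only mildly subtle step is observing that both polynomial growth of degree $\geq 3$ and super-polynomial growth yield a cubic lower bound valid down to $n = 1$, so that the Saloff-Coste consequence can be invoked in the form stated.
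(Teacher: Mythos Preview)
Your approach is correct and is exactly the kind of argument the paper sketches when it says the corollary follows from the ``contrapositive'' of the $D$-dimensional isoperimetric consequence; the paper gives no further details, so your write-up is a legitimate fleshing-out.

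Two remarks. First, you have inherited a typo from the paper: the consequence of Saloff--Coste should read $|A|^{(D-1)/D}\le C'|\partial A|$ (equivalently $|A|\le C''|\partial A|^{D/(D-1)}$), not $|A|^{D/(D-1)}\le C'|\partial A|$. With $D=3$ the correct inequality is $|A_n|^{2/3}\le C' b_n$, and combining with $|A_n|\ge C(d_n-1)^3\ge c\,b_n^{3}$ you get $c' b_n^{2}\le C' b_n$, hence $b_n$ bounded --- still a contradiction, so the slip is harmless. Second, your appeal to the Gromov--Trofimov dichotomy (integer degree or super-polynomial) is doing real work: the bare contrapositive only rules out a uniform cubic lower bound, which does not by itself exclude growth like $n^2\log n$. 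The paper glosses over this, and in fact invokes Trofimov again in the proof of the main theorem; your proof just makes the dependence explicit. One could alternatively deduce $\growth_X(d_n-1)\le C d_n^2$ along the given subsequence directly from Saloff--Coste (via $|A_n|\le 4b_n\phi(2|A_n|)$ together with $|A_n|\le b_n\growth_X(d_n)$), and then quote a ``weak polynomial growth implies polynomial growth'' result, but this is no lighter than what you did.
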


The next lemma provides an application of Corollary~\ref{cor:varopoulos}, specific to our setting. 

\begin{lemma}\label{lem:paths-on-crossexaminer-give-quadgrowth}
    Let $\crosshair = (v_0, \gamma_i,w_i, q_i)$ be an $(r,R,\lambda,c)$-cross-examiner. Suppose that there exists:
    \begin{enumerate}
        \item a strictly increasing sequence of integers $(d_n)$,

        \item a constant $K > 0$, and

        \item a sequence of finite connected subgraphs $Z_n$ of $X$,
    \end{enumerate}
    such that the following hold:
    \begin{enumerate}
        \item each $Z_n$ intersects all three of the $\gamma_i$,

        \item $|Z_n| < Kd_n + K$ for all $n > 0$,

        \item $\dist_X(v_0,Z_n) > \frac 1 Kd_n-K$ for all $n > 0$.
    \end{enumerate}
    Then $X$ has quadratic growth.
\end{lemma}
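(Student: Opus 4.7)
The plan is to apply Corollary~\ref{cor:varopoulos} to a carefully chosen sequence of finite subsets $A_n \subseteq V(X)$. For each $n$ and each $i \in \{1,2,3\}$, let $x_i^{(n)}$ denote the first point at which $\gamma_i$ enters $Z_n$, so that the initial segment $\gamma_i^{(n)} := \gamma_i[v_0, x_i^{(n)}]$ satisfies $\dist_X(v_0, x_i^{(n)}) \leq \dist_X(v_0, Z_n) + |Z_n| = O(d_n)$ by the triangle inequality and the size bound on $Z_n$. Define $A_n$ to be the connected component of $X \setminus N_{10\sigma}(Z_n)$ containing $v_0$; the goal is to verify that the $A_n$ satisfy the hypotheses of Corollary~\ref{cor:varopoulos}.

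The crux is to establish that $A_n$ is finite. For each cyclic index $k$, fix a path $p_k^{(n)} \subset Z_n$ from $x_{k+1}^{(n)}$ to $x_{k+2}^{(n)}$ and form the bi-infinite path
$$\hat\rho_k := (\gamma_{k+1}[x_{k+1}^{(n)}, \infty))^{-1} \cdot p_k^{(n)} \cdot \gamma_{k+2}[x_{k+2}^{(n)}, \infty),$$
splicing $p_k^{(n)}$ into $\rho_k$ in place of the central segment passing through $v_0$. Each $\hat\rho_k$ is a bi-infinite quasi-geodesic at finite Hausdorff distance from $\rho_k$, so the witness protection lemma~(\ref{lem:witness-protection}) gives that appropriate tails of $(\gamma_k, w_k)$ form a pair of witnesses for $\hat\rho_k$. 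Suppose, for contradiction, that there is an infinite simple path $\alpha \subset A_n$ based at $v_0$. Since $\alpha$ avoids $N_{10\sigma}(Z_n) \supseteq N_\sigma(p_k^{(n)})$, the only way for $\alpha$ to cross any $N_\sigma(\hat\rho_k)$ is via the geodesic tails of $\gamma_{k+1}$ and $\gamma_{k+2}$ beyond the $x_i^{(n)}$. Tracking the wide component of $X \setminus N_\sigma(\hat\rho_k)$ in which $\alpha$ lies asymptotically for each $k$, and combining this with the witness separation lemma~(\ref{lem:witness-separation}) applied to the three witness pairs $(w_i, w_j)$ of the cross-examiner, one arrives at a contradiction of the same flavour as in the proofs of Lemmata~\ref{lem:witness-intersects-q} and \ref{lem:lies-in-bounded-component}.

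With $A_n$ finite, the remaining estimates are routine. By local finiteness of $X$, one has $|\partial A_n| \leq |N_{10\sigma}(Z_n)| \leq D^{10\sigma}|Z_n| = O(d_n)$, where $D$ is the maximum degree, whereas $\depth(A_n) \geq \dist_X(v_0, \partial A_n) \geq \dist_X(v_0, Z_n) - 10\sigma \geq d_n/K - K - 10\sigma$. These estimates combine to give $\depth(A_n) \geq |\partial A_n|/K' - K'$ for a uniform constant $K' > 0$. Moreover, $|A_n| \geq \depth(A_n) + 1 \to \infty$, and since an infinite, one-ended vertex-transitive graph has superlinear growth, the Varopoulos isoperimetric inequality applied to $A_n$ forces $|\partial A_n| \to \infty$. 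The hypotheses of Corollary~\ref{cor:varopoulos} are then met, yielding that $X$ has quadratic growth. The main obstacle is the finiteness of $A_n$ argued in the second paragraph: geometrically, the three rays $\gamma_i$ together with the ``barrier'' $Z_n$ coarsely enclose a neighbourhood of $v_0$, and translating this picture into a rigorous contradiction demands careful use of the witness protection and witness separation lemmas in conjunction.
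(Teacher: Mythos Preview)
There is a genuine gap: your choice of $A_n$ as the component of $v_0$ in $X \setminus N_{10\sigma}(Z_n)$ need not be finite, and the ``tracking'' argument you sketch cannot repair this. Consider the Euclidean plane with the three rays $\gamma_1,\gamma_2,\gamma_3$ at mutual angle $120^\circ$, and take $Z_n$ to be the union of the two straight segments $[\gamma_1(d_n),\gamma_2(d_n)]$ and $[\gamma_2(d_n),\gamma_3(d_n)]$. This $Z_n$ is connected, meets every $\gamma_i$, has size linear in $d_n$, and is at distance $\sim d_n/2$ from $v_0$; but it is a tree, and removing a tubular neighbourhood of a finite tree from the plane leaves a \emph{connected, infinite} set. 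So in this example the component of $v_0$ is all of $X\setminus N_{10\sigma}(Z_n)$, and there is an infinite simple path $\alpha$ from $v_0$ which simply goes around one of the free ends of the ``V''. For such an $\alpha$ your spliced quasi-geodesics $\hat\rho_k$ produce no contradiction: $\alpha$ can cross some of them legitimately through the geodesic tails, and there is no parity or pigeonhole obstruction forcing it to lie in inconsistent wide components. The hand-wave ``of the same flavour as Lemmata~\ref{lem:witness-intersects-q} and~\ref{lem:lies-in-bounded-component}'' is precisely the missing content.

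What the paper does instead is form a genuine closed loop $\ell_n$, not just remove $Z_n$. The loop is built from a subpath of (an extension of) $Z_n$ together with a segment $s_n$ of one of the $\rho_i$; the bounded component one analyses contains a point $z_n$ partway along $\gamma_i$, not $v_0$. Two ingredients are essential and absent from your sketch. First, Lemma~\ref{lem:good-subpath-weak} is invoked to extract from $Z_n$ a subpath that visits (neighbourhoods of) all three $\gamma_i$ while avoiding the $2\sigma$-neighbourhood of one specific witness $w_{\pi(3)}$; without this avoidance one cannot invoke Lemma~\ref{lem:lies-in-bounded-component}. Second, Lemma~\ref{lem:lies-in-bounded-component} is then applied to the loop $\ell_n = p_n'' \cdot s_n$ to conclude that an initial segment of the opposite witness $\gamma_{\pi(3)}$ is trapped in a bounded component of $X\setminus N_\sigma(\ell_n)$. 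The depth and boundary estimates then follow for \emph{that} component. In short: removing $Z_n$ alone does not enclose anything; you must close it up with a piece of the cross-examiner, and you must first arrange that the chosen piece of $Z_n$ avoids the correct witness.
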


\begin{proof}
    Fix $n > 0$. Assume without loss of generality that $\dist_X(v_0,Z_n) > 5R + 2\sigma$ for all $n > 0$. Since $Z_n$ is connected, let $p_n \subset Z_n$ be a (not necessarily simple) path inside of $Z_n$ which starts at $\gamma_1$, ends at $\gamma_2$, and intersects $\gamma_3$.
    By Lemma~\ref{lem:good-subpath-weak}, there exists a permutation $\pi$ of the set $\{1,2,3\}$ and a subpath $p_n' \subset p_n$ such that
    \begin{enumerate}
        \item $p_n'$ starts in $N_\sigma(\gamma_{\pi(1)})$, finishes on $N_\sigma(\gamma_{\pi(2)})$, and intersects $N_\sigma(\gamma_{\pi(3)})$.

        \item $p_n'$ avoids the $2\sigma$-neighbourhood of $w_{\pi(3)}$. 
    \end{enumerate}
    Without loss of generality, assume that $\pi$ is the identity. 

    Extend $p_n'$ at both ends with a shortest path of length at most $\sigma$ to $\gamma_1$ and $\gamma_3$. Call this new extended path $p_n''$. Let $x \in \gamma_1$, $y \in \gamma_3$ be the new endpoints, and let $s_n$ be the subsegment of $\rho_3 = \gamma_1^{-1} \cdot \gamma_2$ from $y$ to $x$. Let $\ell_n = p_n'' \cdot s_n$. Since every point on $p_n''$ is at distance at least $\sigma$ from $w_3$, we have by Lemma~\ref{lem:lies-in-bounded-component} that the segment $\gamma_3'$ of $\gamma_3 \setminus N_\sigma(\ell_n)$ which has vertices adjacent to both $N_\sigma(s_n)$ and $N_\sigma(p_n'')$ lies in a bounded component of $X \setminus N_\sigma(\ell_n)$. 

    Write $D_n = \dist_X(v_0,p_n'')$, so that $D_n > \frac 1 Kd_n-K - \sigma$. 
    Let $t_n = D_n/2$, and let $z_n = \gamma_3(t_n)$. It is immediate that $z_n$ lies on $\gamma_3'$ for large enough $n > 0$. We also have the following claim.

    \begin{claim}
        $\dist_X(z_n,\ell_n) \geq \frac {1}{2K\lambda} d_n - K - c - 2\sigma$ for all $n > 0$.
    \end{claim}

    \begin{proof}
        Since paths are parametrised at unit speed, we see that 
        $$
        \dist_X(z_n, p_n'') > \frac{D_n}2 - \sigma > \frac 1 {2K}d_n-K - 2\sigma.
        $$
        Secondly, since $\gamma_3^{-1} \cdot \gamma_i$ is a $(\lambda,c)$-quasi-geodesic for $i = 1,2$, we have that
        $$
        \dist_X(z_n,s_n) > \frac 1 \lambda t_n - c \geq \frac {1}{2K\lambda} d_n - K - c.
        $$
        The claim follows, since $\ell_n = s_n \cup p_n''$ as a set.
    \end{proof}

    Let $A_n$ denote the bounded component of $X \setminus N_\sigma(\ell_n)$ which contains $z_n$. 

    \begin{claim}
        $|\partial A_n| < K' d_n + K'$, for some explicit constant $K'$ not depending on $n$.  
    \end{claim}

    \begin{proof}
        We can easily bound the size of $|\partial A_n|$ via
        \begin{align*}
            |\partial A_n| &\leq \growth_X(\sigma) (|\ell_n|) \\
            &\leq \growth_X(\sigma) (|p_n''| + |s_n|) \\
            &\leq \growth_X(\sigma) (|Z_n| + 2\sigma + \lambda (|Z_n| + 2\sigma) + c) \\
            &\leq \growth_X(\sigma) K (1+\lambda) d_n + \growth_X(\sigma)(c + (1+\lambda)(K+2\sigma)).
        \end{align*}
        This implies our claim.
    \end{proof}

    Applying Corollary~\ref{cor:varopoulos} to the bounded subsets $A_n$, we conclude that the graph $X$ has quadratic growth.
\end{proof}

\section{Circular reasoning}\label{sec:circles}

The following objects are of interest.

\begin{definition}[Quasi-circles]
    Let $X$ be a connected graph, $\lambda \geq 1$, $c \geq 0$. Then a closed loop $C \subset X$ is called a \emph{$(\lambda, c)$-quasi-circle} if, for any subpath $s \subset C$ of length at most half the total length of $C$, we have that $s$ is a $(\lambda, c)$-quasi-geodesic. 
\end{definition}

\begin{lemma}\label{lem:quasi-circles-coarsely-peripheral}
    Suppose $X$ does not have quadratic growth. Let $\lambda \geq 1$, $c \geq 0$.  Then, the collection of all $(\lambda, c)$-quasi-circles has limited jurisdiction. 
\end{lemma}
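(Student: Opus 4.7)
The plan is to argue by contrapositive. Suppose the collection of $(\lambda,c)$-quasi-circles does not have limited jurisdiction: there exist $\delta \geq 0$ and a sequence $(C_n)$ of $(\lambda,c)$-quasi-circles with $\operatorname{Jur}_\delta(C_n) \to \infty$. For each $n$, fix a finite connected component $A_n$ of $X \setminus N_\delta(C_n)$ with $\depth(A_n) = d_n$, chosen so that $d_n \to \infty$, and pick a vertex $v_n \in A_n$ realising the depth. By vertex-transitivity I translate so that $v_n = v_0$ is a fixed vertex for every $n$, and after passing to a subsequence I may assume $(d_n)$ is strictly increasing. Fix a cross-examiner $\crosshair = (v_0,\gamma_i,w_i,q_i)$ at $v_0$ via Proposition~\ref{prop:crosshair-exists}. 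Since $\dist(v_0, \partial A_n) = d_n$ and any path from $v_0$ leaving $A_n$ must first enter $\partial A_n$, the ball $B(v_0, d_n - 1)$ lies inside $A_n$, and consequently $\dist(v_0, C_n) \geq d_n - \delta$. The plan is to build finite connected subgraphs $Z_n$ meeting the hypotheses of Lemma~\ref{lem:paths-on-crossexaminer-give-quadgrowth}, thereby concluding that $X$ has quadratic growth and reaching the desired contradiction.

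The construction of $Z_n$ proceeds in three stages. First, for each $i$, I pick an exit point $y_i \in \partial A_n$: if $\gamma_i(d_n) \in \partial A_n$, set $y_i := \gamma_i(d_n)$, so $y_i \in \gamma_i$ and $\dist(v_0,y_i) \leq d_n$; otherwise $\gamma_i(d_n) \in A_n$, and the depth bound furnishes some $y_i \in \partial A_n$ with $\dist(\gamma_i(d_n), y_i) \leq d_n$, together with a geodesic shortcut $s_i$ of length at most $d_n$ from $\gamma_i(d_n)$ to $y_i$, giving $\dist(v_0,y_i) \leq 2d_n$. Second, since $y_i \in \partial A_n \subset N_\delta(C_n)$, I choose $x_i \in C_n$ with $\dist(x_i, y_i) \leq \delta$ and let $e_i$ be a geodesic of length at most $\delta$ from $y_i$ to $x_i$; note $\dist(v_0, x_i) \leq 2d_n + \delta$. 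Third, let $P_n \subset C_n$ be the union of the two shortest of the three arcs of $C_n$ cut by $x_1, x_2, x_3$ (with the convention that any coincidences are tolerated), and set $Z_n := P_n \cup e_1 \cup e_2 \cup e_3 \cup s_1 \cup s_2 \cup s_3$, omitting any absent $s_i$. By construction $Z_n$ is connected and meets each $\gamma_i$.

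It remains to verify the bounds $|Z_n| = O(d_n)$ and $\dist(v_0, Z_n) = \Omega(d_n)$ required by Lemma~\ref{lem:paths-on-crossexaminer-give-quadgrowth}. Each of the two arcs forming $P_n$ has length at most $|C_n|/2$, so by the quasi-circle hypothesis it is a $(\lambda,c)$-quasi-geodesic; combined with $\dist(x_i, x_j) \leq 4d_n + 2\delta$ (triangle inequality through $v_0$), each such arc has length at most $\lambda(4d_n + 2\delta + c) = O(d_n)$, giving $|Z_n| = O(d_n)$. The distance bound is immediate for points on arcs of $C_n$ and on the extensions $e_i$ (since $\dist(v_0, C_n) \geq d_n - \delta$). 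The hard part will be controlling the shortcuts $s_i$, which a priori could swing back near $v_0$ and destroy the lower bound. The resolution is that both endpoints of $s_i$ lie at distance at least $d_n$ from $v_0$ while $|s_i| \leq d_n$, so for any $w \in s_i$ the triangle inequality gives
\[
\dist(w, v_0) \geq d_n - \min\bigl(\dist(w,\gamma_i(d_n)),\ \dist(w,y_i)\bigr) \geq d_n - \tfrac{1}{2}|s_i| \geq d_n/2.
\]
Thus $\dist(v_0, Z_n) \geq d_n/2 - O(1)$, the hypotheses of Lemma~\ref{lem:paths-on-crossexaminer-give-quadgrowth} are satisfied, and we conclude that $X$ has quadratic growth, contradicting our assumption.
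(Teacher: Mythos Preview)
Your approach is essentially the same as the paper's: assume unlimited jurisdiction, translate the deep points to the basepoint of a cross-examiner, build connected subgraphs $Z_n$ touching all three rays with size $O(d_n)$ and distance $\Omega(d_n)$ from $v_0$, and invoke Lemma~\ref{lem:paths-on-crossexaminer-give-quadgrowth}. The paper takes shortest paths $p_i$ from $\gamma_i(t_i)$ to $C_n$ (with a carefully chosen exit time $t_i$) and links the landing points by quasi-geodesic arcs of $C_n$; your construction via $\partial A_n$ and the two shortest arcs of $C_n$ is a minor variant of the same idea.

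There is, however, a genuine error in your distance estimate for the shortcuts $s_i$. You assert that ``both endpoints of $s_i$ lie at distance at least $d_n$ from $v_0$''. This is true for the endpoint $y_i\in\partial A_n$, since $\dist(v_0,\partial A_n)=d_n$. But it is false for the endpoint $\gamma_i(d_n)$: the rays $\gamma_i$ in a cross-examiner are only $(\lambda,c)$-quasi-geodesics (in particular, after the ``cheap trick'' in the proof of Proposition~\ref{prop:crosshair-exists} they need not be geodesics), so one only has $\dist(v_0,\gamma_i(d_n))\ge \tfrac{d_n}{\lambda}-c$, which for $\lambda>2$ is strictly less than $d_n/2$. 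Your displayed inequality then fails to give a useful lower bound when $w$ is near $\gamma_i(d_n)$.

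The fix is straightforward and is exactly what the paper does: push further along $\gamma_i$. If you exit at time $t_i\approx 2\lambda d_n$ instead of $d_n$, then $\dist(v_0,\gamma_i(t_i))\ge 2d_n-c$, and since $\gamma_i(t_i)$ still lies in $A_n$ (or within $\delta$ of $C_n$), the shortcut $s_i$ still has length at most $d_n$; now $\dist(v_0,w)\ge d_n-c$ for all $w\in s_i$ by the same triangle-inequality argument. Alternatively, one can keep your choice of exit point and redo the estimate with the weaker endpoint bound $\tfrac{d_n}{\lambda}-c$, splitting at the point where $\dist(w,y_i)=(1-\tfrac{1}{2\lambda})d_n$; this yields $\dist(v_0,s_i)\ge \tfrac{d_n}{2\lambda}-c$, which is still enough for Lemma~\ref{lem:paths-on-crossexaminer-give-quadgrowth}.
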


\begin{proof}
    Suppose that there exists $\lambda \geq 1$, $c \geq 0$ such that $(\lambda,c)$-quasi-circles do not have limited jurisdiction.  Then there exists $\delta > 0$, a sequence of $(\lambda, c)$-quasi-circles, and connected components $A_n$ of $X \setminus N_\delta(C_n)$ such that $\depth(A_n)$ goes to infinity. Let us suppose that this is indeed the case. 
    We will use Lemma~\ref{lem:paths-on-crossexaminer-give-quadgrowth} to deduce that $X$ has quadratic growth.
    
    For every $n > 0$, let $m_n \in A_n$ realise the depth of $A_n$. Write 
    $$
    d_n := \dist_X(m_n, C_n) = \depth(A_n)   + \delta.
    $$
    Fix an $(r,R,\lambda,c)$-cross-examiner $\crosshair = (v_0,\gamma_i, w_i, q_i)$, which exists by Proposition~\ref{prop:crosshair-exists}, and let $n > 0$ be large. We will assume without loss of generality that these are the same $\lambda$ and $c$ as above, as increasing the quasi-geodesic constants here does not affect the argument. Using the transitive group action, we may assume that $v_0 = m_n$ for all $n > 0$.

    For each $i \in \Z/3\Z$, let $t_i' \in [0,\infty)$ be minimal such that $\dist_X(\gamma_i(t_i'),C_n) \leq \delta$. This certainly exists, since $A_n$ is bounded and $\gamma_i$ is a quasi-geodesic ray. Let
    $$
    t_i = \min \{t_i',2\lambda(d_n+c)\}. 
    $$
    Write $\gamma_i' = \gamma_i |_{[0,t_i]}$. 
    Let $p_i$ be a shortest path of length at most $d_n$ from $\gamma_i(t_i)$ to $C_n$. Let $x_n$ be the endpoint of $p_i$ which lies on $C_n$. 

    Since $C_n$ is a $(\lambda,c)$-quasi-circle, we have that for all $i \in \Z/3\Z$ there exists a $(\lambda,c)$-quasi-geodesic $\alpha_i \subset C_n$ which connects $x_i$ to $x_{i+1}$. Let
    $$
    Z_n = p_1 \cup p_2 \cup p_3 \cup \alpha_1 \cup \alpha_2 \cup \alpha_3.
    $$
    Our intention is to apply Lemma~\ref{lem:paths-on-crossexaminer-give-quadgrowth} to these $Z_n$. We must check that the hypotheses are satisfied. 

    \begin{claim}
        There exists $K_1 > 0$, not depending on $n$, such that $|Z_n| < K_1 d_n + K_1$. 
    \end{claim}

    \begin{proof}
        Trivially, we have that $\length(p_i) \leq d_n$. By our choice of $t_i$, it is immediate that for $i \neq j$ we have 
        $$
        \dist_X(x_i,x_j) \leq 2(2\lambda (d_n + c) + d_n).
        $$
        In particular, we may bound
        $$
        \length(\alpha_i) \leq 2\lambda(2\lambda (d_n + c) + d_n) + \lambda c \leq 6\lambda^2 (d_n +c).
        $$
        Combining these bounds, we see that 
        $$
        |Z_n| \leq 18 \lambda^2 (d_n + c) + 3d_n = (18\lambda^2 + 3)d_n + 18\lambda^2 c.
        $$
        We may therefore take $K_1 = 21\lambda^2(1+c)$.
    \end{proof}

    \begin{claim}
        There exists $K_2 > 0$, not depending on $n$, such that $\dist_X(v_0, Z_n) > \tfrac 1 {K_2} d_n - K_2$. 
    \end{claim}

    \begin{proof}
        We trivially have that $\dist_X(v_0,\alpha_i) \geq d_n$. Consider now the $p_i$. If $t_i = t_i'$ then $\length(p_i) \leq \delta$, and so $\dist_X(v_0,p_i) \geq d_n - \delta$. Suppose instead that $t_i = 2\lambda(d_n + c)$. Then, since $\gamma_i$ is a $(\lambda,c)$-quasi-geodesic, we have that 
        $$
        \dist_X(v_0,p_i) \geq \frac 1 \lambda 2\lambda(d_n + c) - c - d_n \geq d_n.
        $$
        We may therefore take $K_2 = \max\{1,\delta\}$. 
    \end{proof}

    Each $Z_n$ trivially intersects all three of the $\gamma_i$. It follows from Lemma~\ref{lem:paths-on-crossexaminer-give-quadgrowth} that $X$ has quadratic growth. 
\end{proof}

Quasi-circles are not quite general enough for the final proof. We will require slightly more general figures called \emph{truncated quasi-circles}. 

\begin{definition}[Truncated quasi-circles]
    Let $\lambda \geq 1$, $c \geq 0$. Let $C$ be a $(\lambda, c)$-quasi-circle with a decomposition into two paths $C := F \cdot F'$. Let $f$ be a $(\lambda, c)$-quasi-geodesic with the same endpoints as $F'$. Then the figure $T := F \cdot f$ is called a \emph{truncated $(\lambda, c)$-quasi-circle}.  The subpaths $F$ and $f$ are called the \emph{major} and \emph{minor faces} of $T$.
    We sometimes write $T$ as an ordered pair $T = (F, f)$ to indicate which `side' has been truncated. 
\end{definition}

We now prove that truncated quasi-circles also have limited jurisdiction.

\begin{lemma}\label{lem:truncated-quasicircles-peripheral}
    Let $X$ be a connected, locally finite, vertex-transitive graph satisfying \property{} which does not have quadratic growth. Let $\lambda \geq 1$, $c \geq 0$.  Then the collection of all truncated $(\lambda, c)$-quasi-circles has limited jurisdiction.
\end{lemma}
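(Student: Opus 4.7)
The plan is to follow the proof of Lemma~\ref{lem:quasi-circles-coarsely-peripheral} closely. Assume for contradiction that truncated $(\lambda,c)$-quasi-circles have unbounded jurisdiction: there exist $\delta > 0$, a sequence of truncated quasi-circles $T_n = F_n \cdot f_n$, and bounded components $A_n \subset X \setminus N_\delta(T_n)$ with $\depth(A_n) = d_n \to \infty$, realised by points $m_n \in A_n$. Pick a cross-examiner $\crosshair = (v_0, \gamma_i, w_i, q_i)$ with constants (possibly enlarged) matching $(\lambda, c)$, and translate so that $v_0 = m_n$. Exactly as in Lemma~\ref{lem:quasi-circles-coarsely-peripheral}, for each $i \in \{1,2,3\}$ let $t_i$ be the minimum of the first time $\gamma_i$ enters $N_\delta(T_n)$ and $2\lambda(d_n + c)$, let $p_i$ be a shortest path of length at most $d_n$ from $\gamma_i(t_i)$ to $T_n$, and let $x_i \in T_n$ be its endpoint. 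The aim is then to build a connected subgraph $Z_n \subset X$ containing the $p_i$'s and connecting the $x_i$'s via short arcs $\alpha_i \subset T_n$, with $|Z_n| = O(d_n)$ and $\dist(v_0, Z_n) \geq d_n - O(1)$, so that Lemma~\ref{lem:paths-on-crossexaminer-give-quadgrowth} forces $X$ to have quadratic growth, a contradiction.

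The new feature compared to Lemma~\ref{lem:quasi-circles-coarsely-peripheral} is the presence of the minor face $f_n$, a quasi-geodesic rather than a quasi-circular arc. To connect $x_i$ to $x_{i+1}$ by a short arc $\alpha_i \subset T_n$ three cases arise:
\begin{enumerate}
    \item If $x_i, x_{i+1} \in F_n$, take the shorter arc of $F_n$ between them. Since $F_n$ is a subpath of the quasi-circle $C_n = F_n \cdot F'_n$, this arc has length at most $|F_n|/2 \leq |C_n|/2$ and is therefore a quasi-geodesic; so $|\alpha_i| \leq \lambda\dist(x_i, x_{i+1}) + c = O(d_n)$, as in Lemma~\ref{lem:quasi-circles-coarsely-peripheral}.
    \item If $x_i, x_{i+1} \in f_n$, take the subpath of $f_n$ between them; this is a quasi-geodesic by definition of $f_n$, so again $|\alpha_i| = O(d_n)$.
    \item If one lies on $F_n$ and the other on $f_n$, concatenate a shorter arc of $F_n$ from one endpoint to a junction $y_k$ with the subpath of $f_n$ from $y_k$ to the $f_n$-endpoint.
\end{enumerate}

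The main obstacle is case (3). A priori, both junctions $y_1, y_2$ of $f_n$ may lie much farther than $O(d_n)$ from $v_0$, in which case the naive concatenation gives an arc of length $\gg d_n$. To resolve this, I would extend $f_n$ to a bi-infinite quasi-geodesic $\hat f_n$ by attaching geodesic rays at $y_1, y_2$ chosen so that the extensions remain outside a large neighbourhood of $v_0$ (using that $\dist(v_0, f_n) \geq d_n - \delta$). By \property{}, $X \setminus N_\sigma(\hat f_n)$ has exactly two wide components, with $m_n$ lying deep in one of them; the witness protection lemma~(\ref{lem:witness-protection}) lets us transfer the witnesses for the cross-examiner to $\hat f_n$. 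Combining this with Lemma~\ref{lem:lies-in-bounded-component} and the witness separation lemma~(\ref{lem:witness-separation}), one argues that a relevant junction $y_k$ must in fact lie within $O(d_n)$ of $v_0$: otherwise the closed loop obtained from $w_i$, $w_{i+1}$, $\hat f_n$, and the far portion of $F_n$ would yield configurations contradicting the witness-based machinery of \S\ref{sec:witness}--\S\ref{sec:witness-sep}. Once case (3) is settled, $Z_n := \bigcup_i p_i \cup \bigcup_i \alpha_i$ is a connected subgraph intersecting all three $\gamma_i$, with $|Z_n| = O(d_n)$ and $\dist(v_0, Z_n) \geq d_n - O(1)$. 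Lemma~\ref{lem:paths-on-crossexaminer-give-quadgrowth} then yields quadratic growth of $X$, the sought contradiction.
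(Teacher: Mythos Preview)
Your case analysis has two concrete errors, and the overall strategy misses the key mechanism the paper uses.

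First, in your case~(1) you speak of ``the shorter arc of $F_n$ between them'', but $F_n$ is a \emph{path}, not a loop: there is exactly one sub-arc of $F_n$ joining $x_i$ and $x_{i+1}$, and nothing prevents it from having length greater than $|C_n|/2$, so it need not be a $(\lambda,c)$-quasi-geodesic. This is precisely the configuration the paper isolates as its Case~2. There the correct observation is not that the $F_n$-arc is short, but that the \emph{complementary} arc of $C_n$ (through $F'_n$) is a $(\lambda,c)$-quasi-geodesic of length $O(d_n)$; hence $|F'_n|=O(d_n)$, hence $|f_n|\le\lambda\,\dist(y_1,y_2)+c\le\lambda|F'_n|+c=O(d_n)$, and so the $T_n$-arc through $f_n$ has length $O(d_n)$ and moreover contains $x_3\in f_n$. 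This single arc already hits all three $x_i$, and Lemma~\ref{lem:paths-on-crossexaminer-give-quadgrowth} applies directly. You never use this replacement bound on $|f_n|$, which is the entire point of the truncation.

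Second, your case~(3) argument does not go through. You propose to apply the witness protection lemma to transfer witnesses of the cross-examiner to an extension $\hat f_n$ of $f_n$, but witness protection requires the two bi-infinite quasi-geodesics to lie at \emph{finite Hausdorff distance}, and $\hat f_n$ is in no way Hausdorff-close to any $\rho_i=\gamma_{i+1}^{-1}\cdot\gamma_{i+2}$. The subsequent claim that ``a relevant junction $y_k$ must lie within $O(d_n)$ of $v_0$'' is unsupported; indeed $|f_n|$ can be arbitrarily large compared to $d_n$, so there is no reason for either junction to be near $v_0$.

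More structurally, the paper does \emph{not} attempt to join all three $x_i$ by short arcs in $T_n$ in every situation. It splits into two cases. When some pair $x_i,x_j$ is joined by a $(\lambda,c)$-quasi-geodesic arc inside $T_n$ (Case~1), the paper instead builds a genuine $(\lambda',c')$-quasi-circle $Q_n$ out of two legs of the cross-examiner, the two connecting geodesics $p_i,p_j$, and that arc---here the delicate choice of $t_i$ via the ratio condition $t_i/\dist(\gamma_i(t_i),T_n)\ge 2\lambda$, rather than your simpler choice, is exactly what is needed to verify that $Q_n$ is a quasi-circle. One then either applies Lemma~\ref{lem:paths-on-crossexaminer-give-quadgrowth} (if the connecting path meets $\gamma_3$) or, crucially, invokes the already-proved Lemma~\ref{lem:quasi-circles-coarsely-peripheral} on the $Q_n$ (if it does not). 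Your outline never calls on Lemma~\ref{lem:quasi-circles-coarsely-peripheral}, and without it Case~1 cannot be closed.
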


\begin{proof}
    Suppose that there exists $\lambda \geq 1$, $c \geq 0$ such that truncated $(\lambda,c)$-quasi-circles do not have limited jurisdiction.  Then there exists $\delta > 0$, a sequence $T_n = (F_n, f_n)$ of truncated $(\lambda, c)$-quasi-circles, and connected components $A_n$ of $X \setminus N_\delta(T_n)$ such that $\depth(A_n)$ goes to infinity. 

    We proceed in much the same way as in the proof of Lemma~\ref{lem:quasi-circles-coarsely-peripheral}, though some extra casework will be required. The rough plan of attack is to proceed exactly as we did in the previous lemma, with a bit more care taken with how we choose our shortest paths from the cross-examiner to the truncated quasi-circle. In some cases, the same argument as the previous lemma will suffice, but will break down in others. If this happens, the extra control we established will allow us to construct new, smaller quasi-circles in controlled locations, within which we will apply Lemma~\ref{lem:quasi-circles-coarsely-peripheral} to conclude instead. The way we construct these smaller quasi-circles closely follows the proof of \cite[Prop.~5.1]{hume2020poorly}, though we do make some adjustments \emph{vis-\`a-vis} constants, since we are dealing with quasi-geodesics rather than geodesics.
    
    For every $n > 0$, let $m_n \in A_n$ realise the depth of $A_n$. That is, 
    $$
    d_n := \dist_X(m_n, T_n) = \depth(A_n) + \delta.
    $$
    Fix some large $n > 0$, and also fix a $(r,R,\lambda,c)$-cross-examiner $\crosshair = (v_0,\gamma_i, w_i,q_i)$, which exists by Proposition~\ref{prop:crosshair-exists}.  Using the vertex-transitive action, translate each $T_n$ so that $v_0 = m_n$. We assume without loss of generality and inconsequentially the error constants $\lambda$, $c$ for both the cross-examiner and the truncated quasi-circles are the same.

    Assume without loss of generality that $d_n > 2\lambda \delta$ for all $n > 0$. Consider the cross-examiner $\crosshair$. At this point we should remind the reader of our convention that all paths are parametrised at unit speed. For each $i \in \Z/3\Z$,  fix $t_i > 0$ so that 
    \begin{equation}\label{eq:t-i property}
        \frac{t_i}{\dist_X(\gamma_i(t_i), T_n)} \geq 2\lambda, 
    \end{equation}
    and $t_i$ is minimal among all such values with this property. Such $t_i$ certainly exist by our assumption that $d_n > 2\lambda \delta$, since the $\gamma_i$ must pass within $\delta$ of $T_n$ eventually.
    
    Let $p_i$ be a shortest path from $\gamma_i(t_i)$ to $T_n$. 
    Let $x_i$ denote the endpoint of $p_i$ which lies on $T_n$. 
    Write $\gamma_i' = \gamma_i|_{[0,t_i]}$. 
    We now prove a series of quantitative claims.

    \begin{claim}\label{claim:bound-on-ti}
        We have that 
        $
        \frac {2\lambda}{2\lambda+1} d_n \leq t_i \leq 2\lambda d_n
        $
        for all $i \in \Z/3\Z$. 
    \end{claim}

    \begin{proof}
        Since $t_i$ was chosen to be the minimal such argument satisfying (\ref{eq:t-i property}), we have that
        $
        t_i-1 < 2\lambda \dist_X(\gamma_i(t_i-1), T_n) \leq 2\lambda d_n
        $, 
        and so $t_i \leq 2\lambda d_n$. 
        Suppose now that $t_i < \frac {2\lambda}{2\lambda+1} d_n$. By the reverse triangle inequality, it would follow that 
        \begin{align*}
            2\lambda\dist_X(\gamma_i(t_i), T_n) &\geq 2\lambda (\dist_X(v_0, T_n) - \dist_X(v_0,\gamma_i(t_i)) )\\
            &> 2\lambda (d_n - \frac {2\lambda}{2\lambda+1} d_n) \\
            &> t_i.
        \end{align*}
        This contradicts (\ref{eq:t-i property}). 
    \end{proof}

    \begin{claim}\label{claim:bound-on-pi-length}
        The geodesics $p_i$ satisfy $2\lambda\length(p_i) \leq  t_i$.
    \end{claim}

    \begin{proof}
        This follows immediately from (\ref{eq:t-i property}).
    \end{proof}

    \begin{claim}\label{claim:bound-on-pi-dist}
        Given distinct $i , j \in \Z/3\Z$, we have that $\dist_X(p_i,p_j) \geq \frac 2{2\lambda+1} d_n - c$. 
    \end{claim}

    \begin{proof}
        By the reverse triangle inequality and Claims~\ref{claim:bound-on-ti} and \ref{claim:bound-on-pi-length}, we see that 
        \begin{align*}
            \dist_X(p_i,p_j) &\geq \frac 1 \lambda (t_i + t_j) - c - \length(p_i) - \length(p_j) \\
            &\geq \frac 1 \lambda (t_i + t_j) - c  - \frac 1 {2\lambda }(t_i + t_j)\\
            & \geq \frac{t_i + t_j}{2\lambda} - c \\
            & \geq \frac{2d_n}{2\lambda + 1} - c,
        \end{align*}
        as claimed.
    \end{proof}

    \begin{claim}\label{claim:bound-on-dist-from-gamma-to-Tn}
        For all $i \in \Z/3\Z$, we have that 
        $\dist_X(\gamma_i', T_n) > \frac{d_n}{2\lambda+1}$.
    \end{claim}

    \begin{proof}
        Let $a \in [0,t_i]$. By the reverse triangle inequality, we have that 
        $$
        \dist_X(\gamma_i(a), T_n) \geq \dist_X(v_0,T_n) - a = d_n - a.
        $$
        Since $t_i$ is the minimal argument satisfying (\ref{eq:t-i property}), we have 
        $
        \dist_X(\gamma_i(a), T_n) > \frac{a}{2\lambda} 
        $. 
        If we combine these observations, we see that $$
        \dist_X(\gamma_i(a), T_n) \geq \max\{d_n-a, \frac a {2\lambda}\} \geq \frac {d_n}{2\lambda + 1}.
        $$
        Since $a$ was arbitrary, the claim follows. 
    \end{proof}
    
    We must now separately consider two distinct possibilities. 

    \medskip

    \textbf{Case 1.} Suppose first that, for infinitely many $n > 0$, there exists a $(\lambda,c)$-quasi-geodesic $\alpha$ contained in $T_n$ which connects, say, $x_1$ to $x_2$. Consider the quasi-geodesic quadrilateral 
    $$
    Q_n = \gamma_2'^{-1} \cdot \gamma'_1 \cdot p_1 \cdot \alpha \cdot p_2^{-1}.
    $$
    We remark that we call $Q_n$ a quadrilateral, since $\gamma_2'^{-1} \cdot \gamma'_1$ is a $(\lambda, c)$-quasi-geodesic. 
    To ease notation for the time being, we will simply write $Q = Q_n$, and re-introduce the index later. 
    Note that we easily have the bound
    \begin{align*}
        \length(Q) &\leq t_1 + t_2 + \frac{t_1}{2\lambda} + \frac{t_2}{2\lambda} + \lambda \left( t_1 + t_2 + \frac{t_1}{2\lambda} + \frac{t_2}{2\lambda}\right) + c \\
        &\leq 16\lambda^2d_n + c.
    \end{align*}
    Since $Q$ is a connected subgraph of $X$, let $\dist_Q$ denote the intrinsic path metric inside of $Q$. 
    We now prove the following claim.

    \begin{claim}\label{claim:quasi-circle}
        We have that $Q$ is a $(\lambda', c')$-quasi-circle, where $\lambda' = 48\lambda^3$ and $c' = 2c$. 
    \end{claim}

    \begin{proof}
        We must show that the inclusion $Q \into X$ is a quasi-isometric embedding with uniform constants not depending on $n$. 
        
        Let $x,y \in Q$. If $x$, $y$ lie in a common `side' of $Q$ then we are trivially done. Suppose then that $x \in p_1$, $y \in p_2$. Then by Claim~\ref{claim:bound-on-pi-dist} and the earlier bound on $\length(Q)$, we have that 
        $$
        \dist_X(x,y) \geq \frac{1}{8\lambda^2(2\lambda+1)} \dist_Q(x,y) - 2c \geq \frac 1 {24\lambda^3} \dist_Q(x,y) - 2c.
        $$
        Similarly, if $x \in \gamma_1' \cup \gamma_2'$ and $y \in \alpha \subset T_n$, then by Claim~\ref{claim:bound-on-dist-from-gamma-to-Tn} we may compute
        $$
        \dist_X(x,y) \geq \frac 1 {48\lambda^3} \dist_Q(x,y) - c.
        $$
        Now, suppose without loss of generality $y \in \gamma_1' \cup \gamma_2'$ and $x \in p_1$, the case where $x \in p_2$ is identical. 
        If $y \in \gamma_2'$, then we can easily compute 
        \begin{align*}
            \dist_X(x,y) &\geq \frac 1 \lambda t_1 - c - \length(p_1) \\
            &\geq \frac{d_n}{2\lambda+1} - c \\
            & \geq \frac{1}{48\lambda^3} \dist_Q(x,y) - 2c.
        \end{align*}
        Suppose instead that $y \in \gamma_1'$. To ease calculation, let us reverse and re-parametrise $\gamma_1'$ so that $\gamma_1'(0) = p_1(0)$. Then, write $x = p_1(a)$, $y = \gamma_1'(b)$. Let $D = \dist_X(x,y)$. 
        Suppose for the sake of a contradiction that $D < \frac 1 {8\lambda} (a+b) - c$.
        By our choice of $t_1$ earlier as the minimal argument satisfying (\ref{eq:t-i property}), we have that 
        $$
            t_1 - b < 2\lambda \dist_X(y, T_n) \leq 2\lambda (\dist_X(x,y) + \dist_X(x,T_n)) = 2\lambda(D + \length(p_1) - a).
        $$
        As $\length(p_1) \leq \frac {t_1}{2\lambda}$, we see that
        $t_1 - b < 2\lambda D - 2\lambda a + t_1$ , and so $2\lambda a < b + 2\lambda D$. This implies that 
        Applying our hypothesised upper bound on $D$, routine  calculation shows that 
        $
        a < \frac 5{7\lambda} b
        $. 
        We now compute 
        \begin{align*}
            D &\geq \dist_X(y,p_1(0)) - \dist_X(x,p_1(0)) \\
            &\geq \frac 1 \lambda b - a - c \\
            &\geq \frac 2 {7\lambda }b - c \\
            &= \frac {2b}{7\lambda a + 7\lambda b}(a+b) - c \\
            &= \frac {1}{6\lambda}(a+b) - c \\
            &> D.
        \end{align*}
        This is clearly a contradiction, and so $\dist_X(x,y) \geq \frac 1 {8\lambda} \dist_Q(x,y) -c$.
        
        Finally, let us assume that $x \in p_1$ and $y \in \alpha$. Again, to ease calculation let us now parametrise $p_1$ and $\alpha$ so that $\alpha(0) = p_1(0)$. Write $x = p_1(a)$, $y = \alpha(b)$, and $D = \dist_X(x,y)$. We split into two cases. Suppose first that $a \geq \frac b{2\lambda}$. Then, since $p_1$ is a shortest path to $T_n$ (and thus $\alpha)$, we easily see that 
        $$
        D  \geq a = \frac{\lambda a}{3\lambda a} + \frac{2\lambda a}{3\lambda a} \geq \frac{1}{3\lambda}(a+b).
        $$
        Suppose now that $a < \frac{b}{2\lambda}$. More routine calculation shows that
        \begin{align*}
            D &\geq \dist_X(y,p_1(0)) - \dist_X(p_1(0),x) \\
            & \geq \frac1 \lambda b - c- a \\
            & \geq \frac 1 {3\lambda}(a+b) -c.
        \end{align*}
        We have considered all possible cases, whence the claim.
    \end{proof}

    We now aim to deduce quadratic growth. We return to our cross-examiner $\crosshair$. Let us write $s_n = p_1 \cdot \alpha \cdot p_2^{-1}$. Note that each $s_n$ is a path which connects $\gamma_1$ and $\gamma_2$ (without loss of generality). Moreover, we have the following extra observations.

    \begin{claim}\label{claim:sn-properties}
        The path $s_n$ satisfies the following:
        \begin{enumerate}
            \item\label{itm:length-sn} $\length(s_n) \leq 16\lambda^2d_n + c$.

            \item\label{itm:dist-sn} $\dist_X(v_0,s_n) \geq \frac{2\lambda}{\lambda'(2\lambda+1)}d_n - c'$.
        \end{enumerate}
    \end{claim}

    \begin{proof}
        The first property follows trivially from the fact that $s_n$ is a subpath of $Q_n$. The second follows from Claims~\ref{claim:quasi-circle} and \ref{claim:bound-on-ti}.
    \end{proof}

    Suppose now that $s_n$ intersects the $2\sigma$-neighbourhood of $\gamma_3$ for infinitely many $n$. In this case, it follows that $X$ has quadratic growth by Claim~\ref{claim:sn-properties} and Lemma~\ref{lem:paths-on-crossexaminer-give-quadgrowth}. Otherwise, we may assume that $s_n$ is disjoint from $N_{2\sigma}(\gamma_3)$ for all sufficiently large $n > 0$. Now, it follows from Lemma~\ref{lem:lies-in-bounded-component} that an increasingly large initial segment of the witness $w_3$ lies in a bounded component of $X \setminus N_{2\sigma}(Q_n)$. Taking $n \to \infty$, we see that the $2\sigma$-jurisdiction of the $Q_n$ becomes arbitrarily large, since $w_3$ diverges from $\gamma_1$ and $\gamma_2$ and $\dist_X(v_0, s_n) \to \infty$ as $n \to \infty$. However, by Lemma~\ref{lem:quasi-circles-coarsely-peripheral}, this implies again that $X$ must have quadratic growth. This concludes case 1.

    \medskip

    \textbf{Case 2.} Suppose now that, for infinitely many $n > 0$, there does not exist $(\lambda,c)$-quasi-geodesic $\alpha$ contained in $T_n$ which connects any two distinct $x_i$, $x_j$.
    This can only happen in one certain arrangement. In particular, this implies that, up to relabelling, we have that $x_1$ and $x_2$ lie on the major face $F_n$ of $T_n$, $x_3$ lies on the minor face $f_n$, and the segment of $F_n$ connecting $x_1$ to $x_2$ is not a $(\lambda,c)$-quasi-geodesic. Let $C_n$ denote the $(\lambda,c)$-quasi-circle of which $F_n$ is a subpath of. Let then there must exist a subsegment $\beta_n$ of $C_n$ connecting $x_1$ to $x_2$ which \emph{is} a $(\lambda,c)$-quasi-geodesic. Recall that, to obtain $T_n$, we have replaced a subsegment of $\beta_n$ with $f_n$. Let $\alpha_n \subset T_n$ denote the subpath of $T_n$ which is obtained in this way. Now, while $\alpha_n$ itself is not in general going to be a quasi-geodesic, it does still have sufficiently nice properties which will allow us to apply Lemma~\ref{lem:paths-on-crossexaminer-give-quadgrowth}. 
    Consider the connected subgraph 
    $$
    Z_n = \alpha_n \cup p^{(n)}_1 \cup p^{(n)}_2 \cup p^{(n)}_3.
    $$
    We note the following.

    \begin{claim}\label{claim:Zn properties}
        There exists $K > 0$ such that for all $n > 0$, we have the following:
        \begin{enumerate}
            \item $Z_n$ intersects all three of $\gamma_1$, $\gamma_2$, and $\gamma_3$. 

            \item $|Z_n| \leq Kd_n + K$.

            \item $\dist_X(v_0,Z_n) \geq \frac 1 K d_n - K$. 
        \end{enumerate}
    \end{claim}

    \begin{proof}
        The first item is entirely trivial, and the third follows from earlier analysis. The second item follows from the fact that, while $\alpha_n$ is not itself a quasi-geodesic, it is obtained by talking a $(\lambda,c)$-quasi-geodesic $\beta_n$ and replacing a middle segment with another $(\lambda,c)$-quasi-geodesic, namely the minor face $f_n$. The length of $\beta_n$ is easily seen to be bounded above by an affine function of $d_n$ depending only on $\lambda$ and $c$, and so the same is clearly true for $\alpha_n$. We leave exact computation of the constant $K$ here as an exercise.  
    \end{proof}
    
    It now follows immediately from Claim~\ref{claim:Zn properties} and Lemma~\ref{lem:paths-on-crossexaminer-give-quadgrowth} that $X$ has quadratic growth, whence the lemma.
\end{proof}

\section{The final verdict}\label{sec:final}

In this section, we now proceed with the proof of our main result. 
We will mainly be concerned with the non-hyperbolic case, and for this will will need the following result.

\begin{proposition}[{\cite[Prop.~5.1]{hume2020poorly}}]\label{prop:quasi-circles-exist}
    If $X$ is not hyperbolic then $X$ contains arbitrarily long $(18,0)$-quasi-circles. 
\end{proposition}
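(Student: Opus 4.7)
The plan is to construct arbitrarily long $(18,0)$-quasi-circles by extracting them from suitably ``fat'' geodesic bigons, whose existence follows from non-hyperbolicity via Papasoglu's bigon criterion. This criterion states that a geodesic graph is hyperbolic if and only if its geodesic bigons have uniformly bounded Hausdorff distance. Since $X$ is assumed non-hyperbolic, for each $N > 0$ we therefore obtain a pair of geodesics $\alpha, \beta \colon [0, L] \to X$ with common endpoints and $\dHaus(\alpha, \beta) > N$.

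Starting from such a fat bigon, I would next pass to a sub-bigon with a quantitative fatness estimate. Among all pairs $(\alpha', \beta')$ of sub-geodesics of $(\alpha, \beta)$ sharing common endpoints and having Hausdorff distance at least $N/2$, select one minimising $\length(\alpha') + \length(\beta')$. A contrapositive of this minimality should force an absolute constant $C_0 > 0$ for which, whenever $a = \alpha'(s)$ and $b = \beta'(t)$ are measured along $\alpha'$ and $\beta'$ from a common endpoint of the sub-bigon, we have
\[
    \dist_X(a,b) \geq \tfrac{1}{C_0}(s + t).
\]
Indeed, if this were to fail, bridging between $a$ and $b$ via a short path and replacing the pieces of $\alpha'$ and $\beta'$ from the common endpoint to $a$, $b$ respectively would yield a strictly shorter sub-bigon whose Hausdorff distance is still at least $N/2$ (the excised pieces being short), contradicting minimality.

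With the minimal sub-bigon in hand, form the closed loop $C := \alpha' \cdot (\beta')^{-1}$, whose length is at most $2L$ and at least $N$, since $\dHaus(\alpha', \beta') \geq N/2$ forces both $\alpha'$ and $\beta'$ to have length $\geq N/2$. Any subpath of $C$ of length at most $\tfrac{1}{2}\length(C)$ either lies entirely within one of $\alpha', \beta'$ (hence is a genuine geodesic) or straddles one of the two common endpoints of $(\alpha', \beta')$; in the latter case, the quantitative fatness estimate exhibits the subpath as a $(C_0, 0)$-quasi-geodesic. Since $N$ was arbitrary, this produces arbitrarily long $(C_0,0)$-quasi-circles.

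The main obstacle is the bookkeeping in the minimisation step: it is straightforward to produce \emph{some} absolute constant $C_0$ from the minimisation, but arranging $C_0$ to be the specific value $18$ requires a careful case analysis distinguishing the two types of ``shortcut'' that the minimality argument rules out (one near an endpoint of the sub-bigon, the other across its middle), tracking the precise interaction between the length saved and the Hausdorff distance lost. Optimising this trade-off, together with the choice of the threshold $N/2$, is what yields the explicit constant $18$.
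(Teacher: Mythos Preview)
The paper does not prove this proposition; it is quoted verbatim from \cite[Prop.~5.1]{hume2020poorly}. That said, the paper does reproduce the relevant technique inside the proof of Lemma~\ref{lem:truncated-quasicircles-peripheral} (see Claim~\ref{claim:quasi-circle}), and explicitly remarks that this ``closely follows the proof of \cite[Prop.~5.1]{hume2020poorly}''. So one can read off what the intended argument is. It does begin, as you propose, with a fat geodesic bigon coming from Papasoglu's criterion, but the minimisation is different. Rather than minimising over sub-bigons, one fixes the bigon $(\alpha,\beta)$ and, from each endpoint, takes the \emph{first} parameter $t$ along $\alpha$ at which the ratio $t/\dist_X(\alpha(t),\beta)$ crosses a fixed threshold (compare the choice of the $t_i$ via (\ref{eq:t-i property}) in the paper). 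This ratio-minimality gives two-sided control: for smaller parameters the point on $\alpha$ is linearly far from $\beta$, while at the chosen parameter there is a short bridge to $\beta$. One then forms a quadrilateral from a segment of $\alpha$, the two bridges, and a segment of $\beta$, and checks it is a quasi-circle by a case analysis exactly like Claim~\ref{claim:quasi-circle}. The constant $18$ falls out of that case analysis.

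Your sub-bigon minimisation, as written, has a real gap rather than just bookkeeping to do. You minimise over pairs $(\alpha',\beta')$ of sub-geodesics of $(\alpha,\beta)$ sharing common endpoints; but such common endpoints must lie in $\alpha\cap\beta$, which may well consist only of the two original endpoints, so the minimisation can be vacuous. More importantly, your shortening move---bridge $a$ to $b$ and excise the initial segments---produces a side of the form $[\text{bridge}]\cdot\beta'|_{[t,\cdot]}$, which is neither a sub-geodesic of $\beta$ nor, in general, a geodesic at all. The shortened object is therefore not in the class you are minimising over, so minimality yields no contradiction and the claimed lower bound $\dist_X(a,b)\ge \tfrac{1}{C_0}(s+t)$ does not follow. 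You could try enlarging the minimisation class to all geodesic bigons, but then you must separately argue that the bridged side can be replaced by a geodesic without losing the Hausdorff-distance lower bound, which is not automatic. The cleaner repair is to abandon the sub-bigon minimisation and use the ratio argument above.
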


The next lemma is the final piece of evidence we need to conclude---the smoking gun of our case.

\begin{lemma}\label{lem:bq-implies-quadgrowth}
     If $X$ is not hyperbolic then it has quadratic growth. 
\end{lemma}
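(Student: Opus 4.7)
The plan is a proof by contradiction. Suppose $X$ is not hyperbolic and does not have quadratic growth. By Lemma~\ref{lem:truncated-quasicircles-peripheral}, for every $\lambda \geq 1$, $c \geq 0$ the collection of truncated $(\lambda,c)$-quasi-circles has limited jurisdiction. I will construct a sequence of truncated $(18,0)$-quasi-circles whose $\sigma$-jurisdictions tend to infinity, yielding the desired contradiction.

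First I would use Proposition~\ref{prop:quasi-circles-exist} to fix a sequence $C_n$ of $(18,0)$-quasi-circles with lengths $L_n \to \infty$. For each $n$, cut $C_n$ at two approximately antipodal vertices $a_n, b_n$ into arcs $F_n, F_n'$ (each a $(18,0)$-quasi-geodesic of length $\lfloor L_n/2 \rfloor$). Using vertex-transitivity, translate so $a_n = v_0$ for all $n$, and apply Arzel\`a--Ascoli to pass to a subsequence along which $F_n \to \gamma^+$ and $F_n' \to \gamma^-$ uniformly on compact sets; the concatenation $\rho := (\gamma^-)^{-1}\cdot \gamma^+$ is then a bi-infinite quasi-geodesic through $v_0$. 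Fix a pair of witnesses $(w_1,w_2)$ for $\rho$ using \property{}, and after passing to a further subsequence assume $b_n$ lies in the same wide component of $X \setminus N_\sigma(\rho)$ as $w_1$ for all $n$.

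Next, I would construct the nested truncated quasi-circles. By a diagonal argument, pick $R_n \to \infty$ with $R_n \ll L_n$ so that $\dist_X(F_n(t),\gamma^+(t))$ and $\dist_X(F_n'(t),\gamma^-(t))$ are both at most $1/n$ for all $t \leq R_n$. Let $f_n$ be a geodesic in $X$ from $F_n'(R_n)$ to $F_n(R_n)$, and let $F_n^{\mathrm{maj}}$ denote the subarc of $C_n$ from $F_n(R_n)$ to $F_n'(R_n)$ passing through $b_n$. Then $T_n := F_n^{\mathrm{maj}} \cdot f_n$ is a truncated $(18,0)$-quasi-circle, with minor face $f_n$ lying within $1/n + O(1)$ of $\rho$ over a window of radius $R_n$ around $v_0$, and major face $F_n^{\mathrm{maj}}$ detouring through $b_n$ at distance $\geq L_n/36 - R_n$ from $v_0$. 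Both the $\rho$-window covered and the depth of the excursion grow unboundedly in $n$, giving the desired nested sequence.

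Finally, I would apply the witness protection lemma~(\ref{lem:witness-protection}) to reach the contradiction. The bi-infinite quasi-geodesic $\widetilde\rho_n$ obtained from $\rho$ by substituting $f_n$ for the corresponding subsegment is at bounded Hausdorff distance from $\rho$, so $(w_1,w_2)$ remains a pair of witnesses for $\widetilde\rho_n$. In particular, an infinite tail of $w_1$ enters the wide complementary side of $\widetilde\rho_n$ which contains $b_n$. In order to escape to infinity while avoiding $N_\sigma(T_n)$, the tail of $w_1$ must either pass close to the deep excursion point $b_n$ before bypassing $F_n^{\mathrm{maj}}$, or cross $F_n^{\mathrm{maj}}$ on the far side of the loop; in either scenario an initial segment of $w_1$ of depth $\gtrsim L_n/36 - R_n \to \infty$ is trapped in a finite component of $X \setminus N_\sigma(T_n)$. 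This contradicts limited jurisdiction. The hardest step is this last one: rigorously formulating the trapping, which combines the witness protection argument (propagating the $(w_1, w_2)$-bisection from $\rho$ to $\widetilde\rho_n$) with the nesting of the $T_n$, and likely requires further applications of Lemma~\ref{lem:witness-protection} to handle the interaction of $w_1$ with $F_n^{\mathrm{maj}}$.
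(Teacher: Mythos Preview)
Your overall strategy matches the paper's: take the limit $\rho$ of long quasi-circles via Arzel\`a--Ascoli, fix a pair of witnesses $(w_1,w_2)$ for $\rho$, and manufacture truncated quasi-circles whose jurisdiction blows up, contradicting Lemma~\ref{lem:truncated-quasicircles-peripheral}. However, your execution diverges from the paper's at the crucial trapping step, and there is a genuine gap there.

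The paper does \emph{not} use the full far arc of $C_n$ through an antipode $b_n$. Instead it extracts, for each $n$, a short subsegment $a_n \subset C_n$ disjoint from $N_\sigma(\rho)$ whose endpoints land near $\rho_+$ and $\rho_-$ respectively, with $\dist_X(u,a_n)\to\infty$. By Lemma~\ref{lem:witness-intersects-q} one of the witnesses (say $w_1$) meets $N_\sigma(a_n)$. The paper then uses $a_1$ as the first crossing $q_1$ and $a_n$ (for $n$ large) as the second crossing $q_2$ in the hypotheses of Lemma~\ref{lem:lies-in-bounded-component}; that lemma is precisely the device that converts the two nested crossings into the statement that the initial piece of $w_1$ lies in a \emph{bounded} component of $X\setminus N_\sigma(s_n\cdot a_n')$. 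The truncated quasi-circle is then $s_n\cdot a_n'$ with $s_n\subset\rho$, and the depth blows up simply because $w_1$ diverges from $\rho$ while the projection of $w_1(t)$ onto the loop eventually lands on $s_n\subset\rho$.

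Your construction lacks this two-crossing structure, and so you cannot invoke Lemma~\ref{lem:lies-in-bounded-component}. Your proposed trapping argument (``$w_1$ must pass near $b_n$ or cross $F_n^{\mathrm{maj}}$ on the far side'') is planar intuition with no proxy in the available lemmas; this is exactly the kind of statement that Lemma~\ref{lem:lies-in-bounded-component} was engineered to supply, and it needs the $q_1,q_2$ setup you do not have. There are also smaller issues: nothing forces $b_n$ to lie in a wide component of $X\setminus N_\sigma(\rho)$ at all, and your depth bound $\gtrsim L_n/36 - R_n$ measures how far $b_n$ is from $v_0$, not how far $w_1$ gets from $T_n$, which is what jurisdiction actually records. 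The fix is to replace your $F_n^{\mathrm{maj}}$ by the minimal crossing arc $a_n$ and to pair it with $a_1$ as in the paper.
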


\begin{proof}
    Assume that $X$ is not hyperbolic. Our strategy of proof is to find a sequence of uniform truncated quasi-circles which do not have limited jurisdiction. This will imply that $X$ has quadratic growth by Lemma~\ref{lem:truncated-quasicircles-peripheral}. 

    By Proposition~\ref{prop:quasi-circles-exist}, there exists a sequence $(C_n)$ of $(18,0)$-quasi-circles such that $\length(C_n)$ tends to infinity. Fix a base-vertex $u_n$ in every $C_n$. Let $p_n$ be a subpath of $C_n$ such that $u_n$ is the midpoint of $p_n$, and $\length(p_n) > \length(C_n) / 4$. 
    Using the vertex-transitive action of $G$ upon $X$, we may assume that 
    $
    u_1 = u_2 =  \ldots
    $, and so on. 
    Write $u = u_1$ to ease notation. 
    By the Arzel\`a--Ascoli theorem, the $p_n$ converge uniformly on compact sets to a bi-infinite $(18, 0)$-quasi-geodesic $\rho$. 
    Let 
    $$
    r = \max\{\dist_X(w_i(0), u) : i = 1,2\} + 10\sigma, 
    $$
    and let $B = N_r(u)$. Let $\rho_+, \rho_- \subset \rho$ denote the two infinite connected components of $\rho \setminus B$. 
    Let $(w_1, w_2)$ be a pair of witnesses for $\rho$. Assume without loss of generality that $w_1$ and $w_2$ lie outside of the $10\sigma$-neighbourhood of $\rho$.

    \begin{claim}\label{claim:lands-either-side}
        For every sufficiently large $n > 0$, there exists a subsegment $a_n$ of $C_n$ satisfying the following:
        \begin{enumerate}
            \item $a_n$ is disjoint from $N_\sigma(\rho)$.

            \item The two endpoints $x_n$, $y_n$ of $a_n$ lie $(\sigma+1)$-close to $\rho_+$ and $\rho_-$, respectively. 

            \item $\dist_X(u, a_n) \to \infty$ as $n \to \infty$. 
        \end{enumerate}
    \end{claim}

    \begin{proof}
        There are balls $B_n$ of increasing radii based at $u$ such that $C_n \cap B_n = \rho \cap B_n$.  Outside of this ball, $C_n$ clearly passes $\sigma$-close to both $\rho_+$ and $\rho_-$. Thus, by considering the path segment $C_n \setminus B_n$, this must contain such an $a_n$. 
    \end{proof}

    Let us assume without loss of generality that Claim~\ref{claim:lands-either-side} holds for all $n$. 
    Now, by Lemma~\ref{lem:witness-intersects-q} we have for all $n> 0$ that either $w_1$ or $w_2$ intersects $N_\sigma(a_n)$. By passing to a subsequence and possibly relabelling, let us assume without loss of generality $w_1$ intersects $N_{\sigma}(a_n)$ for every $n$. 
    Consider $n = 1$, and let
    $$
    R = \max \{\dist_X(z, u) : z \in a_1\} + 10\sigma. 
    $$
    Let $B' = N_R(u)$. We pass to yet another subsequence, and assume that for all $n > 1$ that $\ell_n$ coincides with $\rho$ inside of $B'$. See Figure~\ref{fig:final-proof-set-up} for a cartoon of this set-up. 

    \begin{figure}
        \centering
        \tikzset{every picture/.style={line width=0.75pt}} 

\begin{tikzpicture}[x=0.75pt,y=0.75pt,yscale=-1,xscale=1]

\draw [line width=1.5]    (52.5,146.25) -- (510,146.25) ;
\draw [shift={(514,146.25)}, rotate = 180] [fill={rgb, 255:red, 0; green, 0; blue, 0 }  ][line width=0.08]  [draw opacity=0] (9.29,-4.46) -- (0,0) -- (9.29,4.46) -- cycle    ;
\draw [shift={(48.5,146.25)}, rotate = 0] [fill={rgb, 255:red, 0; green, 0; blue, 0 }  ][line width=0.08]  [draw opacity=0] (9.29,-4.46) -- (0,0) -- (9.29,4.46) -- cycle    ;
\draw [color={rgb, 255:red, 74; green, 144; blue, 226 }  ,draw opacity=1 ][line width=3]    (396.5,146) .. controls (396.31,140.63) and (383.5,138.5) .. (393.5,131) .. controls (403.5,123.5) and (468.25,128.5) .. (421.75,94.75) .. controls (375.25,61) and (277.5,41.25) .. (250.5,61.75) .. controls (223.5,82.25) and (233.75,105.75) .. (207,98.75) .. controls (180.25,91.75) and (171,140.75) .. (170.75,146) ;
\draw [shift={(170.75,146)}, rotate = 92.73] [color={rgb, 255:red, 74; green, 144; blue, 226 }  ,draw opacity=1 ][fill={rgb, 255:red, 74; green, 144; blue, 226 }  ,fill opacity=1 ][line width=3]      (0, 0) circle [x radius= 2.55, y radius= 2.55]   ;
\draw [shift={(396.5,146)}, rotate = 268] [color={rgb, 255:red, 74; green, 144; blue, 226 }  ,draw opacity=1 ][fill={rgb, 255:red, 74; green, 144; blue, 226 }  ,fill opacity=1 ][line width=3]      (0, 0) circle [x radius= 2.55, y radius= 2.55]   ;
\draw [color={rgb, 255:red, 65; green, 117; blue, 5 }  ,draw opacity=1 ]   (297.13,146.38) -- (288,157.5) ;
\draw [color={rgb, 255:red, 65; green, 117; blue, 5 }  ,draw opacity=1 ]   (301.75,136.25) -- (297.13,146.38) ;
\draw    (354.25,146) .. controls (372.75,146) and (364.5,165.5) .. (378.5,160.5) .. controls (392.5,155.5) and (396.69,151.38) .. (396.5,146) .. controls (396.31,140.63) and (383.5,138.5) .. (393.5,131) .. controls (403.5,123.5) and (468.25,128.5) .. (421.75,94.75) .. controls (375.25,61) and (277.5,41.25) .. (250.5,61.75) .. controls (223.5,82.25) and (233.75,105.75) .. (207,98.75) .. controls (180.25,91.75) and (171,140.75) .. (170.75,146) .. controls (170.5,151.25) and (160,176.25) .. (146.5,176.25) .. controls (133,176.25) and (137.5,113) .. (118.5,113.5) .. controls (99.5,114) and (86.75,131) .. (88.5,147.25) .. controls (90.25,163.5) and (151.63,227.88) .. (161.75,228.25) .. controls (171.88,228.63) and (170.25,167.5) .. (195.25,142.5) .. controls (220.25,117.5) and (210.5,188) .. (222.5,185.75) .. controls (234.5,183.5) and (231.25,147) .. (242.75,146.25) ;
\draw  [color={rgb, 255:red, 155; green, 155; blue, 155 }  ,draw opacity=1 ][dash pattern={on 1.5pt off 2.25pt}] (276,146.38) .. controls (276,134.71) and (285.46,125.25) .. (297.13,125.25) .. controls (308.79,125.25) and (318.25,134.71) .. (318.25,146.38) .. controls (318.25,158.04) and (308.79,167.5) .. (297.13,167.5) .. controls (285.46,167.5) and (276,158.04) .. (276,146.38) -- cycle ;
\draw [color={rgb, 255:red, 208; green, 2; blue, 27 }  ,draw opacity=1 ]   (297.13,146.38) ;
\draw [shift={(297.13,146.38)}, rotate = 0] [color={rgb, 255:red, 208; green, 2; blue, 27 }  ,draw opacity=1 ][fill={rgb, 255:red, 208; green, 2; blue, 27 }  ,fill opacity=1 ][line width=0.75]      (0, 0) circle [x radius= 3.35, y radius= 3.35]   ;
\draw [color={rgb, 255:red, 65; green, 117; blue, 5 }  ,draw opacity=1 ]   (83.09,217.54) .. controls (111.77,121.67) and (248.4,191.66) .. (288,157.5) ;
\draw [shift={(288,157.5)}, rotate = 319.22] [color={rgb, 255:red, 65; green, 117; blue, 5 }  ,draw opacity=1 ][fill={rgb, 255:red, 65; green, 117; blue, 5 }  ,fill opacity=1 ][line width=0.75]      (0, 0) circle [x radius= 2.01, y radius= 2.01]   ;
\draw [shift={(82.25,220.5)}, rotate = 284.94] [fill={rgb, 255:red, 65; green, 117; blue, 5 }  ,fill opacity=1 ][line width=0.08]  [draw opacity=0] (5.36,-2.57) -- (0,0) -- (5.36,2.57) -- cycle    ;
\draw [color={rgb, 255:red, 65; green, 117; blue, 5 }  ,draw opacity=1 ]   (377.97,50.5) .. controls (362.84,91.72) and (282.98,91.64) .. (301.75,136.25) ;
\draw [shift={(301.75,136.25)}, rotate = 67.18] [color={rgb, 255:red, 65; green, 117; blue, 5 }  ,draw opacity=1 ][fill={rgb, 255:red, 65; green, 117; blue, 5 }  ,fill opacity=1 ][line width=0.75]      (0, 0) circle [x radius= 2.01, y radius= 2.01]   ;
\draw [shift={(379,47.25)}, rotate = 105.09] [fill={rgb, 255:red, 65; green, 117; blue, 5 }  ,fill opacity=1 ][line width=0.08]  [draw opacity=0] (5.36,-2.57) -- (0,0) -- (5.36,2.57) -- cycle    ;
\draw  [color={rgb, 255:red, 155; green, 155; blue, 155 }  ,draw opacity=1 ][dash pattern={on 1.5pt off 2.25pt}] (251.22,146.38) .. controls (251.22,121.02) and (271.77,100.47) .. (297.13,100.47) .. controls (322.48,100.47) and (343.03,121.02) .. (343.03,146.38) .. controls (343.03,171.73) and (322.48,192.28) .. (297.13,192.28) .. controls (271.77,192.28) and (251.22,171.73) .. (251.22,146.38) -- cycle ;
\draw [color={rgb, 255:red, 74; green, 144; blue, 226 }  ,draw opacity=1 ]   (266,145.75) .. controls (281.25,98.5) and (324.75,113.25) .. (330.75,146) ;
\draw [shift={(330.75,146)}, rotate = 79.62] [color={rgb, 255:red, 74; green, 144; blue, 226 }  ,draw opacity=1 ][fill={rgb, 255:red, 74; green, 144; blue, 226 }  ,fill opacity=1 ][line width=0.75]      (0, 0) circle [x radius= 2.01, y radius= 2.01]   ;
\draw [shift={(266,145.75)}, rotate = 287.89] [color={rgb, 255:red, 74; green, 144; blue, 226 }  ,draw opacity=1 ][fill={rgb, 255:red, 74; green, 144; blue, 226 }  ,fill opacity=1 ][line width=0.75]      (0, 0) circle [x radius= 2.01, y radius= 2.01]   ;
\draw [color={rgb, 255:red, 65; green, 117; blue, 5 }  ,draw opacity=1 ]   (300.75,116) ;
\draw [shift={(300.75,116)}, rotate = 0] [color={rgb, 255:red, 65; green, 117; blue, 5 }  ,draw opacity=1 ][fill={rgb, 255:red, 65; green, 117; blue, 5 }  ,fill opacity=1 ][line width=0.75]      (0, 0) circle [x radius= 2.01, y radius= 2.01]   ;

\draw (68,152.15) node [anchor=north west][inner sep=0.75pt]    {$\rho $};
\draw (211.5,61.9) node [anchor=north west][inner sep=0.75pt]    {$C _{n}$};
\draw (385.75,43.15) node [anchor=north west][inner sep=0.75pt]  [color={rgb, 255:red, 65; green, 117; blue, 5 }  ,opacity=1 ]  {$w_{1}$};
\draw (91,202.9) node [anchor=north west][inner sep=0.75pt]  [color={rgb, 255:red, 65; green, 117; blue, 5 }  ,opacity=1 ]  {$w_{2}$};
\draw (322,119.4) node [anchor=north west][inner sep=0.75pt]  [font=\footnotesize,color={rgb, 255:red, 74; green, 144; blue, 226 }  ,opacity=1 ]  {$a_{1}$};
\draw (408.25,104.9) node [anchor=north west][inner sep=0.75pt]  [font=\footnotesize,color={rgb, 255:red, 74; green, 144; blue, 226 }  ,opacity=1 ]  {$a_{n}$};

\end{tikzpicture}
        \caption{}
        \label{fig:final-proof-set-up}
    \end{figure}

    Extend each $a_n$ with two geodesics of length $\sigma+1$ so that its endpoints lie on $\rho$. Call the resulting path $a_n'$.
    For all $n > 1$, let $s_n$ denote the subsegment of $\rho$ connecting the endpoints of $a_n'$. Let $C_n = s_n \cdot a_n$, and let $A_n$ denote the connected component of $X \setminus N_{\sigma}(C_n)$ which contains $w_1(0)$. By Lemma~\ref{lem:lies-in-bounded-component}, we can see that $A_n$ is bounded. It is immediate that each $C_n$ is a truncated $(18, 5\sigma+5)$-quasi-circle.



    \begin{claim}
        We have that $\depth(A_n) \to \infty$ as $n \to \infty$. 
    \end{claim}

    \begin{proof}
        Consider $s_n$. It is clear that for all $t > 0$, there exists a natural number $N$ such that for all $n \geq N$, we have that the nearest point projection of $w_1(t)$ onto $C_n$ lies in $s_n$. Since $w_1$ diverges from $\rho$, this proves the claim. 
    \end{proof}

    By Lemma~\ref{lem:truncated-quasicircles-peripheral}, this implies that $X$ has quadratic growth, as we have shown that  truncated $(18,5\sigma+5)$-quasi-circles do not have limited jurisdiction.
\end{proof}

\main*

\begin{proof}
    Firstly, note that we may assume without loss of generality that $X$ is vertex-transitive, rather than just quasi-transitive, as \property{} is clearly a quasi-isometry invariant and every connected, locally finite, quasi-transitive graph is quasi-isometric to a locally finite, vertex-transitive graph, namely the Cayley--Abels graph of its isometry group \cite[\S2.E]{Cornulier2014}. 

    Either $X$ is hyperbolic, or it is not. First, let us suppose that $X$ is not hyperbolic. 
    By Lemma~\ref{lem:bq-implies-quadgrowth}, we have that $X$ has quadratic growth. We now apply Trofimov's theorem \cite[Thm.~1]{trofimov1985graphs}, which is an extension of Gromov's theorem for groups of polynomial growth \cite{gromov1981groups} to vertex-transitive graphs, and deduce that $X$ is quasi-isometric to a virtually nilpotent group $G$, necessarily of quadratic growth. By the Bass--Guivarc'h formula for the degrees of growth of nilpotent groups \cite{bass1972degree, guivarc1973croissance}, it follows that $G$ is virtually $\Z^2$ and so $X$ is quasi-isometric to the Euclidean plane.

    Now, let us assume that $X$ is hyperbolic. 
    We will make use of standard facts about the Gromov boundary. 
    By Proposition~\ref{prop:one-ended}, we have that $X$ is one-ended and so $\partial_\infty X$ is connected. 
    It is immediate from \property{} and the visibility property that every pair of distinct points $p\neq q$ in $\partial_\infty X$ is a cut pair. That is, $\partial_\infty X \setminus \{p,q\}$ is disconnected.
    Since $\partial_\infty X$ is compact, this implies that $\partial_\infty X$ is homeomorphic to the circle $\mathbf S^1$; see, for example, \cite[Thm.~IV.12.1]{newman1951elements}.  
    It now follows from known results that that $X$ is quasi-isometric to the hyperbolic plane; see the proof of \cite[Thm.~1.1]{macmanus2024note} for details on how to deduce this in the case that $X$ is not a Cayley graph.
\end{proof}

\section{Closing remarks}

We close this paper with a question. In the introduction, we remarked that the convergence group theorem implies that hyperbolic groups with \property{} are virtually Fuchsian. One might protest that the consequences here are actually stronger. 

\begin{definition}
    Let $X$ be a geodesic metric space. We define the \emph{uniformly bisecting geodesics} property, denoted (\ref{eq:BG}), as follows:
        \begin{equation}
  \tag{$\rm{UBG}$}\label{eq:BG}
  \parbox{4in}{%
    \strut
    There exists $\sigma > 0$ such that for any connected, bi-infinite \textbf{geodesic} $\rho \subset X$, we have that $X \setminus N_\sigma(\rho)$ contains exactly two wide connected components. 
    \strut
  }
\end{equation}
\end{definition}

In other words, the definition of (\ref{eq:BG}) is obtained by simply taking Definition~\ref{def:ubq} and replacing the word `quasi-geodesic' with `geodesic'.
Note that (\ref{eq:BG}) is \emph{a priori} much weaker than \property{}. However, it follows from the convergence group theorem and standard properties of hyperbolic groups that an infinite hyperbolic group is virtually Fuchsian if and only if its Cayley graphs have (\ref{eq:BG}). This begs the natural question.

\begin{question}
    Let $X$ be an infinite, connected, locally finite, quasi-transitive graph satisfying (\ref{eq:BG}). Then, is $X$ quasi-isometric to either the Euclidean plane or the hyperbolic plane?
\end{question}

I expect the answer to be negative in general, but perhaps positive under the right additional hypotheses. For example, it is not obvious that (\ref{eq:BG}) should be invariant under quasi-isometry. Very little is known about the structure of geodesic rays in an arbitrary locally finite, quasi-transitive graph, and the outlook is relatively bleak. It could be that there is a counterexample to the above question which just contains `very few' bi-infinite geodesics, by some measure. This would be very interesting to see.

We could also ask a slightly weaker question. Perhaps instead of considering \emph{all} quasi-geodesics like we did in Definition~\ref{def:ubq}, we just consider $(\lambda,c)$-quasi-geodesics where $\lambda$ and $c$ are some fixed universal constants. It is plausible to imagine that if, say, all $(100,100)$-quasi-geodesics uniformly coarsely bisect a locally finite, quasi-transitive graph $X$, then $X$ is quasi-isometric to a plane. The existence of such universal constants do not follow from our proof of Theorem~\ref{thm:main-theorem}. 

\bibliographystyle{abbrv}
\bibliography{references}

\end{document}